 \newtheorem*{corollary*}{Corollary}
 \newtheorem*{construction*}{Construction}
 \newtheorem*{definition*}{Definition}
 \newtheorem*{notation*}{Notation}
 \newtheorem*{lemma*}{Lemma}
 \newtheorem*{theorem*}{Theorem}
 \newtheorem*{remark*}{Remark}
 \newtheorem*{example*}{Example}
 \newtheorem*{conjecture*}{Conjecture}
 \newtheorem*{condition*}{Condition}
 \newtheorem*{result*}{Result}
 \newtheorem*{property*}{Property}
 \newtheorem*{cor*}{Corollary}
 \newtheorem*{const*}{Construction}
 \newtheorem*{defn*}{Definition}
 \newtheorem*{notn*}{Notation}
 \newtheorem*{lem*}{Lemma}
 \newtheorem*{thm*}{Theorem}
 \newtheorem*{rem*}{Remark}
 \newtheorem*{exm*}{Example}
 \newtheorem*{conj*}{Conjecture}
 \newtheorem{lemma}{Lemma}[subsection]
 \newtheorem{remark}[lemma]{Remark}
 \newtheorem{theorem}[lemma]{Theorem}
 \newtheorem{definition}[lemma]{Definition}
 \newtheorem{notation}[lemma]{Notation}
 \newtheorem{corollary}[lemma]{Corollary}
 \newtheorem{thm}[lemma]{Theorem}
 \newtheorem{prop}[lemma]{Proposition}
 \newtheorem{lem}[lemma]{Lemma}
 \newtheorem{defn}[lemma]{Definition}
 \newtheorem{notn}[lemma]{Notation}
 \newtheorem{cor}[lemma]{Corollary}
 \newtheorem{rem}[lemma]{Remark}
 \newtheorem{introtheorem}{Theorem}
 \crefname{introtheorem}{theorem}{theorems}
 \Crefname{introtheorem}{Theorem}{Theorems}
  \newtheorem{introthm}[introtheorem]{Theorem}
   \crefname{introthm}{theorem}{theorems}
 \Crefname{introthm}{Theorem}{Theorems}
  \crefname{introcorollary}{corollary}{corollaries}
 \Crefname{introcorollary}{Corollary}{Corollaries}
 \newtheorem{introcor}[introtheorem]{Corollary}
   \crefname{introcor}{corollary}{corollaries}
 \Crefname{introcor}{Corollary}{Corollaries}
   \crefname{introconjecture}{conjectures}{conjectures}
 \Crefname{introconjecture}{Conjecture}{Conjectures}
 \newtheorem{introconj}[introtheorem]{Conjecture}
    \crefname{introconj}{conjectures}{conjectures}
 \Crefname{introconj}{Conjecture}{Conjectures}
 \newtheorem{introlem}[introtheorem]{Lemma}
     \crefname{introlem}{lemma}{lemmas}
 \Crefname{introlem}{Lemma}{Lemmas}
 \newtheorem{introremark}[introtheorem]{Remark}
 \crefname{introremark}{remark}{remarks}
 \Crefname{introremark}{Remark}{Remarks}
 \newtheorem{introrem}[introtheorem]{Remark}
  \crefname{introrem}{remark}{remarks}
 \Crefname{introrem}{Remark}{Remarks}
 \newtheorem{introprop}[introtheorem]{Proposition}
   \crefname{introprop}{Proposition}{Propositions}
 \Crefname{introprop}{Proposition}{Propositions}
 \newtheorem{introdefn}[introtheorem]{Definition}
   \crefname{introdefn}{definition}{definitions}
 \Crefname{introdefn}{Definition}{Definitions}
   \crefname{intronotn}{notation}{notations}
 \Crefname{intronotn}{Notation}{Notations}
   \crefname{introtask}{task}{tasks}
 \Crefname{introtask}{Task}{Tasks}
  \crefname{introprob}{problem}{problems}
 \Crefname{introprob}{Problem}{Problems}
   \crefname{introquestion}{question}{questions}
 \Crefname{introquestion}{Question}{Questions}
 \crefname{theorem}{theorem}{theorems}
 \Crefname{theorem}{Theorem}{Theorems}
  \crefname{thm}{theorem}{theorems}
 \Crefname{thm}{Theorem}{Theorems}
  \crefname{corollary}{Corollary}{Corollaries}
 \Crefname{corollary}{Corollary}{Corollaries}
   \crefname{cor}{Corollary}{Corollaries}
 \Crefname{cor}{Corollary}{Corollaries}
   \crefname{conjecture}{conjectures}{conjectures}
 \Crefname{conjecture}{Conjecture}{Conjectures}
    \crefname{conj}{conjectures}{conjectures}
 \Crefname{conj}{Conjecture}{Conjectures}
     \crefname{lem}{lemma}{lemmas}
 \Crefname{lem}{Lemma}{Lemmas}
      \crefname{lemma}{Lemma}{Lemmas}
 \Crefname{lemma}{Lemma}{Lemmas}
 \crefname{remark}{remark}{remarks}
 \Crefname{remark}{Remark}{Remarks}
  \crefname{rem}{remark}{remarks}
 \Crefname{rem}{Remark}{Remarks}
   \crefname{rem}{remark}{remarks}
 \Crefname{rem}{Remark}{Remarks}
   \crefname{proposition}{Proposition}{Proposition}
 \Crefname{proposition}{Proposition}{Proposition}
    \crefname{prop}{Proposition}{Propositions}
 \Crefname{prop}{Proposition}{Propositions}
   \crefname{defn}{definition}{definitions}
 \Crefname{defn}{Definition}{Definitions}
   \crefname{notn}{notation}{notations}
 \Crefname{notn}{Notation}{Notations}
   \crefname{task}{task}{tasks}
 \Crefname{task}{Task}{Tasks}
  \crefname{prob}{problem}{problems}
 \Crefname{prob}{Problem}{Problems}
   \crefname{question}{question}{questions}
 \Crefname{question}{Question}{Questions}
\newcommand{\alp}{\alpha}
\newcommand{\Ind}{\operatorname{Ind}}
\newcommand{\ind}{\operatorname{ind}}
\newcommand{\Ext}{\operatorname{Ext}}
\newcommand{\Span}{{\operatorname{Span}}}
\newcommand{\End}{\operatorname{End}}
\newcommand{\GL}{\operatorname{GL}}
\newcommand{\oH}{\operatorname{H}}
\newcommand{\supp}{\operatorname{Supp}}
\newcommand{\Mat}{\operatorname{Mat}}
\newcommand{\Hom}{\operatorname{Hom}}
\newcommand{\bC}{\mathbb{C}}
\newcommand{\bN}{\mathbb{N}}
\newcommand{\bQ}{\mathbb{Q}}
\newcommand{\bZ}{\mathbb{Z}}
\newcommand{\OO}{\mathcal{O}}
\newcommand{\NN}{\mathcal{N}}
\newcommand{\gotG}{\mathfrak{g}}
\newcommand{\Sc}{\cS}
\newcommand{\onto}{\twoheadrightarrow}
\newcommand{\into}{\hookrightarrow}
\providecommand{\fh}{\mathfrak{h}}
\providecommand{\fg}{\mathfrak{g}}
\providecommand{\fm}{\mathfrak{m}}
\providecommand{\fz}{\mathfrak{z}}
\providecommand{\fS}{\mathfrak{S}}
\providecommand{\cB}{\mathcal{B}}
\providecommand{\cE}{\mathcal{E}}
\providecommand{\cF}{\mathcal{F}}
\providecommand{\cH}{\mathcal{H}}
\providecommand{\cI}{\mathcal{I}}
\providecommand{\cJ}{\mathcal{J}}
\providecommand{\cL}{\mathcal{L}}
\providecommand{\cM}{\mathcal{M}}
\providecommand{\cN}{\mathcal{N}}
\providecommand{\cO}{\mathcal{O}}
\providecommand{\cR}{\mathcal{R}}
\providecommand{\cS}{\mathcal{S}}
\providecommand{\cU}{\mathcal{U}}
\providecommand{\g}{\mathfrak{g}}
\newcommand{\simpAr}[2][r]{%
\ar@{}[#1]|-*[@]_{#2}%
}
\newcommand{\Dima}[1]{{{#1}}}
\newcommand{\DimaA}[1]{{{#1}}}
\newcommand{\DimaB}[1]{{{#1}}}
\newcommand{\Rami}[1]{{{#1}}}
\newcommand{\RamiA}[1]{{{#1}}}
\newcommand{\RamiB}[1]{{{#1}}}
\newcommand{\RamiC}[1]{{{#1}}}
\newcommand{\RamiD}[1]{{{#1}}}
\newcommand{\RamiE}[1]{{{#1}}}
\newcommand{\RamiF}[1]{{{#1}}}
\newcommand{\EitanA}[1]{{{#1}}}
\newcommand{\proofend}{\hfill$\Box$\smallskip}
\newcommand{\lbl}[1]{\label{#1}}
\newcommand{\ot}{\leftarrow}
\begin{document}

\title{$\fz$-finite distributions on $p$-adic groups}
\author{Avraham Aizenbud}
\address{Avraham Aizenbud,
Faculty of Mathematics and Computer Science, Weizmann
Institute of Science, POB 26, Rehovot 76100, Israel }
\email{aizenr@gmail.com}
\urladdr{http://www.wisdom.weizmann.ac.il/~aizenr}
\author{Dmitry Gourevitch}
\address{Dmitry Gourevitch, Faculty of Mathematics and Computer Science, Weizmann
Institute of Science, POB 26, Rehovot 76100, Israel }
\email{dimagur@weizmann.ac.il}
\urladdr{http://www.wisdom.weizmann.ac.il/~dimagur}
\author{Eitan Sayag}
\address{Eitan Sayag,
 Department of Mathematics,
Ben Gurion University of the Negev,
P.O.B. 653,
Be'er Sheva 84105,
ISRAEL}
 \email{eitan.sayag@gmail.com}
\address{Alexander Kemarsky,
Mathematics Department, Technion - Israel
Institute of Technology, Haifa, 32000 Israel}
\email{alexkem@tx.technion.ac.il}

\keywords{Bernstein center, fuzzy balls, \DimaB{special balls}, wavefront set, spherical character, \DimaB{relative character}, Harish-Chandra-Howe germ expansion}
\subjclass[2010]{20G05, 20G25, 22E35, 46F99}
%
%
%
%
%
%
%
%
\date{\today}
\maketitle
\begin{abstract}

For a real reductive group $G$,
the center $\fz(\cU(\fg))$ of the universal enveloping algebra of the Lie algebra $\fg$ of $G$ acts on the space of distributions on $G$.
This action proved to be very useful (see e.g. \cite{HCBul,HCReg,Sha,Bar}).

Over non-Archimedean local fields, one can replace the action of  $\fz(\cU(\fg))$  by the action of the Bernstein center $\fz$ of $G$, i.e. the center of the category of smooth representations.
However, this action is not well studied. In this paper we provide some tools to work with this action and prove  the following results.
\begin{itemize}

\item The wavefront set of any $\fz$-finite distribution $\xi$ on $G$ over any point $g\in G$ lies inside the nilpotent cone of $T_g^*G \cong \fg$.

\item Let $H_1,H_2 \subset G$ be symmetric subgroups. Consider the space $\cJ$ of $H_1\times H_2$-invariant distributions on $G$. We prove that the $\fz$-finite distributions in $\cJ$ form a dense subspace. In fact we prove this result in wider generality, where the groups $H_i$ are spherical subgroups of certain type and the invariance condition is replaced by equivariance.
\end{itemize}

Further we apply those results to density and regularity of \DimaB{relative} characters.

 The first  result can be viewed as a version of Howe's expansion of characters.

The second result can be viewed as a spherical space analog of
a classical theorem on density of characters of \DimaB{finite length} representations.
It can also be viewed as a spectral version of Bernstein's localization principle.

In the Archimedean case, the first result is well-known and the second remains open.
\end{abstract}

\tableofcontents

\section{Introduction}\label{sec:intro}


Let $\mathbf{G}$ be a reductive group defined over a non-Archimedean local field $F$. Let $G:=\mathbf{G}(F)$ \EitanA{be the corresponding $l$-group} and let $\Sc(G)$  be the space of locally constant compactly supported functions on $G$. Let $\fz:=\fz(G):=\End_{G \times G}(\Sc(G))$ denote the Bernstein center (see \Cref{subsec:Ber}). The action of $\fz$ on $\cS(G)$ gives rise to the dual action on the space of distributions $\cS^*(G)$.

In this
paper we study $\fz$-finite distributions, i.e. distributions $\xi$ such that \EitanA{$\dim (\fz \cdot \xi)< \infty $}.

\subsection{Wavefront set of $\fz$-finite distributions}
Our first result concerns the  wavefront set of such distributions.
For  $x\in G$ let $WF_{x}(\xi)$ denote the intersection of the wavefront set of $\xi$ with the cotangent space $T_x^*G$ (see \Cref{subsec:WF}).

In \Cref{sec:WF} we prove
\begin{introthm}\label{thm:main}
Suppose that $F$ has characteristic zero. Let $\xi \in \cS^*(G)$ be a $\fz$-finite distribution. Then for any $x\in G$ we have
\begin{equation}
WF_{x}(\xi) \subset \cN
\end{equation}
where $\cN \subset \fg^*$ is the nilpotent cone, and we identify \DimaA{the Lie algebra} $\fg$ with $T_xG$ using the right action\footnote{Since $\cN$ is invariant by conjugation it does not matter whether we use the right or the left action.}.
\end{introthm}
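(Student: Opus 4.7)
The plan is to reduce to the case $x=e$ by translation, linearize near the identity, and then extract from the $\fz$-finiteness hypothesis enough ``symbolic'' information about $\xi$ to pin its wavefront set down to the nilpotent cone. First I would reduce to $x=e$: since $\fz=\End_{G\times G}(\cS(G))$ commutes with left and right translations, the translate $L_{x^{-1}}\xi$ is again $\fz$-finite, and because the wavefront set is covariant under the diffeomorphism $L_{x^{-1}}$ while $\cN\subset\fg^*$ is $\Ad$-invariant (as the footnote points out), it suffices to prove $WF_e(L_{x^{-1}}\xi)\subset\cN$. Using $\mathrm{char}(F)=0$ I would then fix a compact open neighborhood $U$ of $e$ and an $\Ad$-equivariant chart $\varphi:U\to\fg$ with $\varphi(e)=0$ and $d\varphi_e=\id$ (e.g.\ a truncated logarithm or Cayley map), so that $WF_e(\xi)$ is identified with $WF_0(\varphi_*(\xi|_U))\subset\fg^*$ and the whole question becomes one of Fourier analysis for a distribution on $\fg$ near $0$.

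The core of the argument is to exploit the cofinite annihilator ideal $I\subset\fz$ of $\xi$. Each $z\in\fz$ is represented by an essentially compact $\Ad$-invariant distribution on $G$, and for $z\in I$ one has $z*\xi=0$ identically. The plan is to show that for every $\lambda\in\fg^*\setminus\cN$ the ideal $I$ contains an element whose local ``symbol'' at $\lambda$ is nonzero, so that the microlocal calculus of convolutions forces $\lambda\notin WF_0(\varphi_*(\xi|_U))$. To produce such symbols I would combine the description of $\fz$ via the Bernstein decomposition (so that on each block $\fz$ surjects onto the ring of regular functions on an affine parameter variety) with a careful asymptotic analysis of the convolutional action of Bernstein-center elements near $e$, exhibiting for each non-nilpotent $\lambda$ an explicit element of $I$ whose action detects $\lambda$.

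The main obstacle is precisely this convolution-to-symbol correspondence. In the Archimedean analogue, $\fz(\cU(\fg))$ consists of bi-invariant differential operators whose principal symbols are the $\Ad$-invariant polynomials on $\fg^*$, and the statement follows from a direct application of H\"ormander's microlocal calculus. In the $p$-adic setting, by contrast, elements of $\fz$ are non-compactly supported invariant distributions, so one must construct explicit test elements with controlled local Fourier behavior — presumably the ``fuzzy balls'' and ``special balls'' listed among the keywords — and verify that the cofinite ideal $I$ contains enough such elements to detect every covector outside $\cN$.
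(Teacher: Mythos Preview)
Your reduction to $x=e$ and the linearization via a chart near the identity match the paper exactly. The divergence, and the real gap, is in the ``core of the argument''.

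You propose to work directly inside the annihilator ideal $I\subset\fz$ and to produce, for each non-nilpotent $\lambda$, an element of $I$ whose ``symbol'' at $\lambda$ is nonzero. This is not what the paper does, and it is hard to see how to make it work: elements of $\fz$ are essentially compact but not compactly supported invariant distributions, and there is no available symbol calculus assigning to them a value at a single covector. The Bernstein decomposition tells you about their spectral behavior on irreducible representations, not about their local Fourier behavior on $G$ near $e$.

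The paper's route is more indirect. From $\fz$-finiteness it first deduces (\Cref{cor:adm}) that the two-sided module $\cS(G)*\xi*\cS(G)$ has finite length as a $G\times G$-representation. The test elements are then the special-ball idempotents $e_B\in\cH(G)$, which are \emph{compactly supported} Hecke elements, not elements of $\fz$ at all. The key structural input is \Cref{thm:FuzzyFin}: on any finitely generated representation, $\pi(e_B)=0$ for all but finitely many non-nilpotent special balls $B$. This in turn rests on Howe's lemma that $\Ad(G)S\subset S_1+\cN$ for $S$ compact (\Cref{lem:comp}), which is the genuine geometric reason the nilpotent cone appears. From this one gets $e_B*\xi=0$ for all non-nilpotent special balls outside a compact set (\Cref{prop:EigenFinSupp}).

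Finally there is a second nontrivial step you have not anticipated: transporting the vanishing $e_B*\xi=0$ on $G$ to a convolution vanishing on $\fg$. Since $\exp$ is not a group homomorphism, $\exp^*$ does not intertwine convolutions in general; the paper proves a restricted compatibility (\Cref{prop:FouExp}) using the Baker--Campbell--Hausdorff estimate in \Cref{lem:exp}, and only then can one read off $\cF(\exp^*(\xi))|_{\alpha B}=0$ for large $|\alpha|$ and conclude via \Cref{lem:FouFuzz}. Your proposal implicitly assumes a clean ``microlocal calculus of convolutions'' across the chart, but in the $p$-adic setting this compatibility has to be established by hand and is one of the technical contributions of the paper.
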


Our main tool is the theory of \DimaB{special} balls. This theory was developed for $\mathbf{G}= \GL_n$ in \cite{S} \DimaB{(where these balls were called fuzzy balls)}, using some ideas from \cite{H,H2}.  In \Cref{subsec:PrelFuzzy,sec:balls} we recall the relevant part of this theory and adapt it to general reductive groups.

\DimaA{
\begin{introremark}
We need the characteristic zero assumption since we use the exponentiation map in order to identify a neighborhood of zero in $\fg$ with a neighborhood of the unit element in $G$. For $G=\GL_n$ one can use the map $X \mapsto \mathrm{Id} + X$ (as in \cite{S}) and drop the assumption on the characteristic. \RamiE{It is likely that for other classical groups one can use the Cayley map, and considerably weaken the assumption on the characteristic. The general case can be possibly treated using \cite[Appendix A]{AS}.}
\end{introremark}
}
\subsection{Density of $\fz$-finite distributions}
\DimaB{
The next results of this paper depend on closed subgroups of $G$.
We will require some conditions on these subgroups. We will describe those conditions in \Cref{defn:FinMult} below. If a subgroup $H\subset G$
satisfies these conditions we will call the pair \Rami{$(G,H)$} \textit{a \Rami{pair of finite type}}.    Conjecturally, if $F$ has characteristic zero then this holds for all spherical pairs. As explained below in \Cref{subsubsec:FinGen}, many cases of this conjecture follow from \Cref{sec:FinGen} and \cite[Theorem A]{AAG}, based on \cite[Theorem 5.1.5]{SV}, and \cite{Del}. Those cases include all symmetric pairs of reductive groups. 

\begin{introthm}[see \Cref{sec:Dense} below] \label{thm:Dense}
\Rami{Let $H_1,H_2\subset G$ be two (closed) subgroups and $\chi_i$ be characters of $H_i$.}
Consider the two-sided action of $H_1\times H_2$ on $G$ and let $$\cI:=\Sc^*(G)^{(H_1\times H_2,\chi_1\times\chi_2)}$$ be the space of $(H_1\times H_2,\chi_1\times\chi_2)$-equivariant distributions on $G$. Note that the Bernstein center $\fz$ acts on $\cI$.
Assume that  the \Rami{pairs $(G,H_i)$ are of finite type}.

 Then the space of $\fz$-finite distributions in $\cI$ is dense in $\cI$.
\end{introthm}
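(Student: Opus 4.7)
The plan is to use the Bernstein decomposition to reduce to a single block, identify the block-level equivariant distributions with the linear dual of a twisted coinvariant module, and then apply commutative algebra in the form of Krull's intersection theorem.

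Decompose $\Sc(G)=\bigoplus_\Omega \Sc(G)_\Omega$ with $\fz=\prod_\Omega \fz_\Omega$, so that $\Sc^*(G)=\prod_\Omega \Sc^*(G)_\Omega$ and $\cI=\prod_\Omega \cI_\Omega$, where $\cI_\Omega:=\Sc^*(G)_\Omega\cap \cI$. Since the $H_1\times H_2$ action preserves each block, the components of an equivariant distribution are themselves equivariant, and in the weak-$*$ topology the elements of $\prod_\Omega \cI_\Omega$ supported on finitely many $\Omega$ are dense. Thus it suffices to prove density of $\fz_\Omega$-finite elements inside each individual $\cI_\Omega$. Let $M_\Omega$ denote the $(H_1\times H_2,\chi_1\times\chi_2)$-twisted coinvariants of $\Sc(G)_\Omega$; by duality between equivariant functionals and coinvariants we have $\cI_\Omega\cong M_\Omega^*$ as $\fz_\Omega$-modules, equipped with the weak-$*$ topology.

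The crucial input is that the finite type assumption on the pairs $(G,H_i)$ forces $M_\Omega$ to be a finitely generated $\fz_\Omega$-module. This is where the hypothesis does its work: Bernstein's theory presents $\Sc(G)_\Omega$ as a finitely generated module over a Hecke-type algebra that is finite over its center $\fz_\Omega$, and the finite multiplicity / finite generation statements underlying the finite type condition (as developed in \Cref{sec:FinGen} and in \cite{AAG,SV,Del}) descend through the twisted coinvariant functor to yield finite generation of $M_\Omega$ over $\fz_\Omega$. This is the main obstacle: once this is secured, the remainder is essentially formal.

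Granting the finite-generation input, the density step is pure commutative algebra. Since $\fz_\Omega$ is a finitely generated commutative reduced $\CC$-algebra, it is Noetherian and Jacobson; Krull's intersection theorem applied to the finitely generated $\fz_\Omega$-module $M_\Omega$ yields $\bigcap_I IM_\Omega=0$, where $I$ ranges over cofinite ideals of $\fz_\Omega$. For any finite-dimensional subspace $V\subset M_\Omega$, a dimension count in $V$ then produces a single cofinite $I$ (obtained by taking a suitable product of finitely many cofinite ideals, which remains cofinite by the Noetherian hypothesis) with $V\cap IM_\Omega=0$, so that $V$ embeds into the finite-dimensional quotient $M_\Omega/IM_\Omega$. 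Given $\xi\in \cI_\Omega\cong M_\Omega^*$ and a basic weak-$*$ neighborhood determined by $m_1,\dots,m_k\in M_\Omega$, set $V=\Span_\CC(m_1,\dots,m_k)$, extend the functional $\xi|_V$ arbitrarily to a linear form on $M_\Omega/IM_\Omega$, and pull it back to $\eta\in M_\Omega^*$. By construction $I\cdot \eta=0$, so $\eta$ is $\fz_\Omega$-finite and agrees with $\xi$ on $V$, yielding the required approximation.
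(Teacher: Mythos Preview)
Your strategy coincides with the paper's: dualize to coinvariants, cut down to a piece on which the relevant portion of $\fz$ acts through a finitely generated commutative $\bC$-algebra, show that piece is a finitely generated module, and finish with Artin--Rees/Krull. The commutative-algebra endgame you wrote is exactly \Cref{lem:comalg}. One cosmetic difference: the paper decomposes via splitting compact open subgroups $K$ (hence by finite unions of Bernstein components) rather than by individual blocks $\Omega$; this is convenient because the finite-type hypothesis is literally phrased in terms of $K$, but your block-by-block reduction is fine once you pass through a splitting $K$ containing $\Omega$.

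The one genuine gap is the step you yourself flag as ``the main obstacle'': the finite generation of $M_\Omega$ over $\fz_\Omega$. Your justification---that $\Sc(G)_\Omega$ is finitely generated over a Hecke-type algebra finite over $\fz_\Omega$ and that this ``descends through the twisted coinvariant functor''---is not a proof, and as written is misleading (the block $\Sc(G)_\Omega$ is certainly not finitely generated over $\fz_\Omega$). The paper's mechanism is a concrete tensor-product identity (\Cref{lem:(K)}): for a splitting $K$ and $V=\ind_{H_1}^G\chi_1'$ one has
\[
(V^{(K)})_{(H_2,\chi_2^{-1})}\;\cong\;(\ind_{H_2}^G\chi_2')^K\;\otimes_{\cH_K(G)}\;(\ind_{H_1}^G\chi_1')^K.
\]
Now the finite-type hypothesis says exactly that each factor is finitely generated over $\cH_K(G)$, and since $\cH_K(G)$ is finite over its center $\fz(\cH_K(G))$ (\Cref{thm:Ber}), the tensor product is finitely generated over $\fz(\cH_K(G))$. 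Inserting this formula is what turns your outline into a proof; once you do, your argument and the paper's are the same.
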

}
\subsection{Applications}
\DimaB{In this subsection we continue to work in the notation and assumptions of \Cref{thm:Dense}.}

Important examples of $\fz$-finite distributions in $\cI$ are $(H_{1}\times H_{2},\chi_1\times \chi_2)$-\DimaB{relative} characters of  \DimaB{finite length}  representations (see \Cref{def:SphChar}). It turns out that those examples are exhaustive. Namely, we have the following proposition.

\begin{introprop}[see \Cref{sec:DistChar} below]\label{prop:DistChar}
Any $\fz$-finite distribution in $\cI$ is an  $(H_1\times H_2, \chi_1\times \chi_2)$-\DimaB{relative} character of some \DimaB{finite length} representation of $G$.
\end{introprop}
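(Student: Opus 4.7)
The plan is to construct from $\xi$ a finite length smooth $G$-representation $\pi$ together with equivariance data witnessing $\xi$ as its relative character. Let $W^+ := \ind_{H_2}^G(\chi_2^{-1})$ and $W^- := \Ind_{H_1}^G(\chi_1)$ denote the compact and smooth inductions, respectively. Two applications of Frobenius reciprocity (one to absorb the $(H_1,\chi_1)$-equivariance on the left, one for the $(H_2,\chi_2)$-equivariance on the right) yield a canonical identification
$$\cI \;=\; \cS^*(G)^{(H_1 \times H_2,\,\chi_1 \times \chi_2)} \;\cong\; \Hom_G(W^+, W^-),$$
compatible with the natural $\fz$-actions on both sides. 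Let $\Phi_\xi \colon W^+ \to W^-$ denote the $G$-intertwiner corresponding to $\xi$, and set $\pi := \Im(\Phi_\xi)$, so that $\pi$ is simultaneously a quotient of $W^+$ and a subrepresentation of $W^-$.

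The equivariance data is then extracted from $\pi$ by further Frobenius reciprocity. The surjection $W^+ \onto \pi$ corresponds to a canonical vector $v \in \pi^{(H_2,\chi_2^{-1})}$, while the inclusion $\pi \into W^-$ corresponds to a smooth $(H_1,\chi_1)$-equivariant linear functional $\ell$ on $\pi$. Chasing through the chain of Frobenius isomorphisms (a routine verification) should give the identity
$$\xi(f) \;=\; \ell\bigl(\pi(f)\, v\bigr) \qquad (f \in \cS(G)),$$
exhibiting $\xi$ as the relative character of $(\pi,v,\ell)$ in the sense of \Cref{def:SphChar}; once $\pi$ is admissible, $\ell$ automatically lies in $\tilde\pi^{(H_1,\chi_1)}$.

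The main obstacle is to show that $\pi$ is of finite length, and it is here that both the $\fz$-finiteness of $\xi$ and the finite type hypothesis on the pairs $(G,H_i)$ are simultaneously used. The $\fz$-finiteness says that $\Phi_\xi$ is annihilated by a finite codimension ideal $I \subset \fz$. Decomposing $\fz = \prod_\Omega \fz_\Omega$ over the Bernstein blocks, this forces $\pi$ to be supported in only finitely many blocks, and in each block $\Omega$ the action of $\fz_\Omega$ on $\pi^\Omega$ factors through the finite dimensional quotient $\fz_\Omega/I_\Omega$; in particular, only finitely many infinitesimal characters can appear among the subquotients of $\pi$. On the other hand, the finite type of $(G,H_i)$ provides the finite multiplicities
$$\dim \Hom_G(W^+,\sigma) \;=\; \dim \sigma^{(H_2,\chi_2^{-1})} \;<\; \infty, \qquad \dim \Hom_G(\sigma,W^-) \;=\; \dim \Hom_{H_1}(\sigma,\chi_1) \;<\; \infty$$
for every irreducible $\sigma$, which makes $W^\pm$ admissible in each Bernstein block. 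Combining admissibility with the $\fz$-finite constraint in each block forces $\pi^\Omega$ to have finite length, and summing over the finitely many relevant blocks yields the desired finite length of $\pi$.
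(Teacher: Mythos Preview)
Your overall strategy is close to the paper's—both realize $\xi$ through a finite-length representation built from the data—but two of your steps have genuine gaps.

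First, the Frobenius reciprocity you invoke for compact induction, namely that the surjection $W^+\onto\pi$ corresponds to a vector $v\in\pi^{(H_2,\chi_2^{-1})}$, is false: compact induction from a non-open closed subgroup is not left adjoint to restriction (for instance the identity lies in $\End_F(\cS(F))=\Hom_F(\ind_{\{0\}}^F\bC,\cS(F))$ but is not of the form $f\mapsto\pi(f)v$ for any $v\in\cS(F)$). Concretely, a smooth vector $v\in\pi$ satisfying $\pi(h)v=\chi_2^{-1}(h)v$ for all $h\in H_2$ is typically zero when $H_2$ is non-compact. This is exactly why \Cref{def:SphChar} places $l_2$ in $(\tilde V^*)^{H_2,\chi_2^{-1}}$ rather than in $V$. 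The reciprocity that does work, $\Hom_G(\ind_{H_2}^G\chi,\pi)\cong\Hom_G(\tilde\pi,\Ind_{H_2}^G\hat\chi)\cong\Hom_{H_2}(\tilde\pi,\hat\chi)$, requires $\pi$ admissible and yields a functional on $\tilde\pi$; so the finite-length step must precede the extraction of the equivariance data, and that data lives on the dual side. The paper sidesteps this entirely by working with the bilinear form $B(f,g)=\xi(f*g)$ on $\cH(G)$, quotienting by its kernels to get mutually dual finite-length modules $M,N$ (via \Cref{lem:adm}), and then using \Cref{lem:admFin} to see that $B$ lies in the finite-dimensional space $(M_{H_1,\chi_1})^*\otimes(N_{H_2,\chi_2})^*$, where it decomposes as a sum of rank-one tensors.

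Second, the assertion that finite multiplicities make $W^\pm$ ``admissible in each Bernstein block'' is false: a single block contains infinitely many non-isomorphic irreducibles, and $W^+$ generally meets infinitely many of them (already $W^+=\cS(G)$ when $H_2=\{1\}$), so $(W^+)^K$ is infinite-dimensional even blockwise. The correct route, which is exactly \Cref{cor:zLocFinGenAdm} (and is what underlies \Cref{lem:adm}), uses the finite-type hypothesis directly rather than through multiplicities: $(W^+)^K$ is finitely generated over $\cH_K(G)$, hence so is $\pi^K$ as its quotient; since $\cH_K(G)$ is finite over its center and $\fz$-finiteness makes that center act on $\pi^K$ through a finite-dimensional quotient, one gets $\dim\pi^K<\infty$ for all $K$, whence finite length by \Cref{lem:ZfinAdm}.
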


Together with \Cref{thm:Dense} it implies

\begin{introcor}\label{cor:SphCharDense}
The space of $(H_1\times H_2, \chi_1\times \chi_2)$-\DimaB{relative} characters of \DimaB{finite length} representations of $G$ is dense in $\cI$.
\end{introcor}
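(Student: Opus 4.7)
The plan is simply to combine \Cref{thm:Dense} with \Cref{prop:DistChar}, and the corollary should follow formally without any further technical input. Write $\cI_{\mathrm{fin}} \subset \cI$ for the subspace of $\fz$-finite distributions and $\cI_{\mathrm{char}} \subset \cI$ for the subspace spanned by $(H_1\times H_2, \chi_1\times\chi_2)$-relative characters of finite length representations of $G$. The goal reduces to verifying $\cI_{\mathrm{fin}} = \cI_{\mathrm{char}}$ and invoking \Cref{thm:Dense}.

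First I would check the inclusion $\cI_{\mathrm{char}} \subset \cI_{\mathrm{fin}}$. This is exactly the assertion highlighted in the sentence preceding \Cref{prop:DistChar}, but it is also easy to see directly: the Bernstein center $\fz$ acts on any finite length smooth $G$-representation $\pi$ through a finite-dimensional quotient (namely, through the finite product of the local Bernstein centers of the blocks containing the subquotients of $\pi$). Consequently the $\fz$-span of a relative character attached to $\pi$ is finite-dimensional, so such a relative character lies in $\cI_{\mathrm{fin}}$.

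The reverse inclusion $\cI_{\mathrm{fin}} \subset \cI_{\mathrm{char}}$ is precisely the content of \Cref{prop:DistChar}. Combining the two inclusions gives $\cI_{\mathrm{fin}} = \cI_{\mathrm{char}}$. Applying \Cref{thm:Dense}, which asserts that $\cI_{\mathrm{fin}}$ is dense in $\cI$, we conclude that $\cI_{\mathrm{char}}$ is dense in $\cI$ as well.

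There is no genuine obstacle in this argument; all the substance has been absorbed into \Cref{thm:Dense} and \Cref{prop:DistChar}. The only point worth flagging is the first inclusion above, which is an input that the reader may want spelled out explicitly, but which is standard and a single-line consequence of the fact that $\fz$ acts on finite length representations through finite-dimensional quotients.
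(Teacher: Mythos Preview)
Your argument is correct and matches the paper's approach exactly: the corollary is stated immediately after \Cref{prop:DistChar} as a direct consequence of combining it with \Cref{thm:Dense}. The only minor remark is that the inclusion $\cI_{\mathrm{char}} \subset \cI_{\mathrm{fin}}$, while true, is not actually needed for the density conclusion---only $\cI_{\mathrm{fin}} \subset \cI_{\mathrm{char}}$ (i.e., \Cref{prop:DistChar}) together with \Cref{thm:Dense} is required.
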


\Cref{thm:main} provides a simple proof of the easy part of Harish-Chandra's regularity theorem \cite{HC_sub,H2}, namely the regularity of the character on the set of regular semi-simple elements. In \Cref{sec:reg} we generalize this result to the realm of spherical pairs. For that, we introduce the notion of $H_1\times H_2$-cotoric elements and prove
the following result.
\begin{introcor}\label{thm:reg}
Suppose that $F$ has characteristic zero, \DimaB{and $H_i$ are $F$-points of algebraic groups $\mathbf{H}_i\subset \mathbf{G}$.} Let $\xi \in \cI$ be a $\fz$-finite distribution.
Then $\xi$ is smooth in the neighborhood of any $H_1\times H_2$-cotoric element.
\end{introcor}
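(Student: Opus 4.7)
The plan is to first unpack what ``$H_1 \times H_2$-cotoric'' should mean: given the algebraicity hypothesis on $\mathbf{H}_i \subset \mathbf{G}$, I expect the definition in \Cref{sec:reg} to declare $x \in G$ cotoric precisely when the conormal space $N_x^{*} := (T_x(H_1 x H_2))^{\perp} \subset T_x^*G \cong \fg^*$ to the two-sided orbit through $x$ meets the nilpotent cone $\cN$ only at the origin. This specializes to the classical notion of regular semisimple element in the case $H_1 = H_2 = G$ acting by conjugation, where $N_x^* = \ft^*$ for the centralizing Cartan $\ft$ and $\ft^* \cap \cN = \{0\}$; so the corollary genuinely generalizes the easy half of Harish-Chandra regularity.

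Granting this definition, the proof is a short two-step combination of \Cref{thm:main} with the equivariance of $\xi$. First, since $\xi$ is $(H_1\times H_2, \chi_1 \times \chi_2)$-equivariant and the characters $\chi_i$ are locally constant, $\xi$ is locally $(H_1^0 \times H_2^0)$-invariant near $x$ for suitable compact open subgroups $H_i^0 \subset H_i$. A microlocal computation then yields $WF_x(\xi) \subset N_x^*$. Combined with $WF_x(\xi) \subset \cN$ from \Cref{thm:main}, we conclude
$$WF_x(\xi) \subset \cN \cap N_x^* = \{0\},$$
which is exactly smoothness of $\xi$ in a neighborhood of $x$.

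The main obstacle is justifying the equivariance-implies-conormal-wavefront inclusion in the $p$-adic setting. Over $\R$ this is standard H\"ormander microlocal calculus, but over a non-Archimedean $F$ one must argue directly that invariance of $\xi$ under an analytic family of translations forces its wavefront set at $x$ to lie in the annihilator of the tangent space to that family's orbit at $x$. I expect this to follow from the non-Archimedean microlocal formalism that is used in the proof of \Cref{thm:main} (the wavefront set detects failure of smoothness through the Fourier-theoretic behavior of translates of $\xi$, and translation-invariance kills the relevant directions), together with the algebraicity assumption on $\mathbf{H}_i$, which ensures the analytic tangent space to the orbit $H_1 x H_2$ at $x$ coincides with the image of $\Lie(\mathbf{H}_1) \oplus \Lie(\mathbf{H}_2)$ under the infinitesimal two-sided action --- so the annihilator produced by the microlocal argument matches the conormal $N_x^*$ appearing in the definition of cotoric.
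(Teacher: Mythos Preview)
Your proposal is correct and matches the paper's approach essentially line for line: the paper's proof of \Cref{thm:reg} (given as \Cref{cor:sm} in \Cref{sec:reg}) combines \Cref{thm:main} with the two parts of \Cref{lem:WFSmooth}, namely that equivariance under an analytic group action (changing by a character) forces $WF_x(\xi)$ into the conormal of the orbit, and that $WF_x(\xi)=\{0\}$ implies smoothness at $x$. The ``main obstacle'' you flag --- the $p$-adic version of the equivariance-implies-conormal-wavefront inclusion --- is exactly \Cref{lem:WFSmooth}\eqref{it:equiv}, which the paper cites from \cite[Theorem 4.1.5]{Aiz}; your reduction to invariance under compact open subgroups is unnecessary since that lemma is stated directly for actions that change $\xi$ by a character.
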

This results generalizes the main result of \cite[\S 5]{RR}, since if $\EitanA{H:=}H_1=H_2$ is a symmetric subgroup then the $H$-regular semisimple elements are cotoric (see \Cref{lem:SymSmooth}).


\EitanA{Combining \Cref{thm:main,thm:Dense} we obtain the following tool to study invariant distributions}:

\begin{introcor}\label{cor:WF}
The subspace of distributions in $\cI$ whose wavefront set at any point is contained in the nilpotent cone in the dual Lie algebra $\fg^*$ is dense in $\cI$.
\end{introcor}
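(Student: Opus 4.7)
The plan is to derive \Cref{cor:WF} as an essentially immediate consequence of combining \Cref{thm:main} and \Cref{thm:Dense}, so the proof should be very short. First I would invoke \Cref{thm:Dense} to conclude that the subspace
\[
\cI_{\fz\text{-fin}} := \{\xi \in \cI : \dim(\fz\cdot\xi) < \infty\}
\]
is dense in $\cI$. Then I would apply \Cref{thm:main} to each $\xi \in \cI_{\fz\text{-fin}}$ to conclude that $WF_x(\xi) \subset \cN$ for every $x \in G$.

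Therefore $\cI_{\fz\text{-fin}}$ is contained in the subspace
\[
\cI_{\cN} := \{\xi \in \cI : WF_x(\xi) \subset \cN \text{ for every } x \in G\},
\]
and since $\cI_{\fz\text{-fin}}$ is already dense in $\cI$, the larger subspace $\cI_{\cN}$ is a fortiori dense. This finishes the proof.

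I do not anticipate any real obstacle: the two inputs (\Cref{thm:main} and \Cref{thm:Dense}) are precisely aligned for this corollary. The only minor point to mention is that \Cref{thm:main} requires $F$ to have characteristic zero, so this hypothesis must be inherited by \Cref{cor:WF} (or, alternatively, one should observe that under the remark following \Cref{thm:main}, the characteristic restriction can sometimes be relaxed, e.g.\ for $\GL_n$). Beyond that, there is nothing further to verify.
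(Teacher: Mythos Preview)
Your proposal is correct and matches the paper's approach exactly: the paper simply states that \Cref{cor:WF} is obtained by combining \Cref{thm:main} and \Cref{thm:Dense}, which is precisely the argument you give. Your observation about inheriting the characteristic zero hypothesis from \Cref{thm:main} is also appropriate.
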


\subsection{Related results}

The germ at the unit element of the character of an irreducible representation of $G$
\EitanA{can be presented} as a linear combination of
Fourier transforms of invariant measures of nilpotent orbits. This was shown in \cite{H} for $\mathbf{G}=\GL_n$ and in \cite{HCQ} for general $\mathbf{G}$. This cannot be naively generalized to the case of symmetric pairs, since the nilpotent orbital integrals are not defined for symmetric spaces in general. However, in \cite[\S 7]{RR} it is shown that the germ at the unit element of a \DimaB{relative} character is a Fourier transform of a distribution supported on the nilpotent cone.

\Cref{thm:main}  can be viewed as a version of these results, which gives less information but works in  wider generality. Namely, it
implies that the germ of \DimaB{any relative character of any finite length representation} is a Fourier transform of a distribution supported near the nilpotent cone.

Distributions \EitanA{arising in} representation theory are often $\fz$-finite. In the Archimedean case (where $\fz$ means the center of the universal enveloping algebra of the Lie algebra) this was widely exploited. For example it was used to prove the Harish-Chandra regularity theorem (\cite{HCBul,HCReg}),
uniqueness of Whittaker models (\cite{Sha}) and Kirillov's conjecture
(\cite{Bar}). Recently, it was used in \cite{JSZ} to prove uniqueness of Ginzburg-Rallis models and in \cite{AG_ZR} to show non-vanishing of Bessel-like functions. However, in the non-Archimedean case there were no tools that use finiteness of distributions under the Bernstein center. This work provides such a tool.

A classical result (see \cite[\S A.2]{DKV} and \cite[Appendix]{Kaz})
 says that characters of \DimaB{finite length} representations span a dense subspace of the space of conjugation-invariant distributions on $G$. One can view \Cref{cor:SphCharDense} as the relative counterpart of this result.

One can attempt to generalize \Cref{thm:Dense} in the following direction. Let an $l$-group $G$ act on an $l$-space $X$, and let $\cE$ be a $G$-equivariant sheaf on $X$. Let a complex commutative algebra $A$ act on $\cS(X,\cE)$. Let $\cI:=\cS^*(X,\cE)^G$ be the space of invariant distributional sections of $\cE$. Assume that $A$ preserves $\cI$. Is the space of $A$-finite distributions in $\cI$ dense in $\cI$? Another important special case of this question is the case when $A=\cS(Y)$ where $Y$ is some $l$-space and the action of $A$ on $\cS(X)$ is given by a map from $X$ to $Y$ \DimaB{and $G$ acts on the fibers of this map}.
In this case the positive answer is given by Bernstein's localization principle \cite[\S 1.4]{BerP}.
Thus, one can view \Cref{thm:Dense} as a spectral version of Bernstein's localization principle.

The Archimedean analogs of \Cref{thm:Dense} as well as of Bernstein's localization principle are not known in general.

\subsection{Tools developed in this paper}

\subsubsection{Pairs of finite type}\label{subsubsec:FinGen}
\Rami{
\begin{introdefn}\label{defn:FinMult}

Let $H<G$ be a closed subgroup and $\chi$ be its character.
We say that the pair $(G,H)$ has \emph{finite type} if
for any character $\chi$ of $H$ and any compact open 
subgroup $K<G$, the module $(\ind_{H}^{G}\chi)^K$
  over the Hecke algebra
$\mathcal{H}_K(G)$ is
finitely generated. \DimaB{Here, $\ind_{H}^{G}\chi$ denotes the compact induction.}
\end{introdefn}

In \Cref{sec:FinGen} we give  the following criterion for  pairs to be of  finite type.

\begin{introtheorem}[cf. \Cref{thm:ExpFinGen}]\label{thm:FinGen}
Let $H$ be a closed subgroup of $G$. Let $\mathbf{P}$ be a minimal parabolic  subgroup of $\mathbf{G}$ and $P=\mathbf{P}(F)$. Suppose that $H$ has finitely many orbits on $G/P$.
Suppose that for any
irreducible smooth representation $\rho$ of $G$ and any character $\chi$ of $H$ we have
\begin{equation} \label{eq:FinMult}
\dim\Hom_{H}(\rho|_{H}, \chi) < \infty .
\end{equation}
 Then the  pair  $(G,H)$ is of  finite type.
\end{introtheorem}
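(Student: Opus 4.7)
The plan is to reduce to block-by-block finite generation via Bernstein's decomposition and then use the geometric lemma combined with induction on semisimple rank.

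First, I would use Bernstein's decomposition of the category of smooth $G$-representations into blocks $\{\Omega\}$. For any fixed compact open $K\subset G$, only finitely many blocks admit a representation with nonzero $K$-fixed vector, so setting $V:=\ind_H^G\chi$ one has a finite direct sum $V^K=\bigoplus_{\Omega}V_\Omega^K$. It therefore suffices to prove that each $V_\Omega^K$ is finitely generated over $\cH_K(G)_\Omega$; by standard Bernstein--Bushnell--Kutzko theory this reduces to a finite-generation statement over the block center $\fz_\Omega$, a finitely generated commutative algebra.

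Next, I would compute the Jacquet modules $r_Q^G V$ for standard parabolics $Q=L_QU_Q$ via the Bernstein--Zelevinsky geometric lemma. This yields a finite filtration of $r_Q^GV$ indexed by $H\backslash G/Q$, which is finite because every parabolic contains a conjugate of $P$, so $H\backslash G/Q$ is a quotient of $H\backslash G/P$. The subquotients take the form $\ind_{(H^g\cap Q)U_Q/U_Q}^{L_Q}(\chi^g\cdot\delta)$ for a suitable modular twist $\delta$ and orbit representatives $g$; crucially, each subquotient is again a compact induction of a character from a closed subgroup of the Levi $L_Q$, so the class of representations under study is stable under taking Jacquet modules.

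I would then argue by induction on the semisimple rank of $\mathbf{G}$. For a non-cuspidal block $\Omega=[L,\sigma]_G$ with $L\subsetneq G$, Bernstein's second adjointness identifies $V_\Omega^K$ with a module built naturally from the $\sigma$-component of $r_{\bar Q}^G V$; the geometric-lemma description combined with the inductive hypothesis applied to the Levi pairs $\bigl(L_Q,(H^g\cap Q)U_Q/U_Q\bigr)$ then gives the required finite generation. To close the induction one must verify that both hypotheses of the theorem transfer to these Levi pairs: the finite-orbit hypothesis does because $(H^g\cap Q)$-orbits on $L_Q/P_{L_Q}$ are images of $H$-orbits on $G/P$, and the finite-multiplicity hypothesis \eqref{eq:FinMult} transfers via Frobenius reciprocity and parabolic induction, using that every irreducible of $L_Q$ appears (up to twist) as a cuspidal support of some irreducible of $G$.

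The main obstacle is the base case of cuspidal blocks $\Omega=[\sigma]_G$. There, $V_\Omega$ is controlled by the function $\chi_u\mapsto\dim\Hom_H\bigl((\sigma\otimes\chi_u)|_H,\chi\bigr)$ on the torus of unramified characters of the split center; hypothesis \eqref{eq:FinMult} gives pointwise finiteness, but what is needed is finite generation of $V_\Omega^K$ over $\fz_\Omega$. I expect to handle this via an upper-semicontinuity or constructibility argument along the Bernstein variety $\Spec\fz_\Omega$, possibly exploiting a Bushnell--Kutzko type idempotent that realizes $V_\Omega^K$ concretely as a module over $\fz_\Omega$ in which the relevant dimensions admit a uniform bound.
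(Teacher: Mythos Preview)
Your overall architecture is close to the paper's: use the geometric lemma and second adjointness to reduce to ``descendant'' pairs inside Levi quotients, and then handle a cuspidal case. The paper organizes this not by induction on rank but via Bernstein's criterion (\Cref{VKFinGen}): finite type holds once $\Hom_G(i_M^G(\Psi(\rho)),\ind_H^G\chi)$ is finitely generated over $\cO(\Psi_M)$ for every Levi $M$ and irreducible cuspidal $\rho$ of $M$. This is equivalent in spirit to your block-by-block reduction.

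There are two genuine gaps. First, your claim that finite multiplicity ``transfers via Frobenius reciprocity and parabolic induction'' to the Levi pairs skips a real difficulty. Second adjointness and the geometric lemma give $\dim\Hom_M(\bar r_M^G(\ind_H^G\chi),\tau)<\infty$ for irreducible cuspidal $\tau$, but this is $\Hom$ from a \emph{filtered} object, and finiteness on the total does not pass to each graded piece $\Hom_M(\ind_{H_i}^M\chi_i,\tau)$ without control of $\Ext^1$ of the other graded pieces. The paper supplies this via the homological \Cref{LinAlg} together with the bound $\dim\Ext^1_M(\ind_{H'}^M\chi',\tau)<\infty$ for cuspidal $\tau$ (\Cref{FinDimH1H0}), which uses projectivity of cuspidals restricted to $M^1$ (\Cref{CuspProj}).

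Second, and this is the heart of the matter, your cuspidal base case is unresolved. You correctly note that pointwise finiteness of $\chi_u\mapsto\dim\Hom_H((\sigma\otimes\chi_u)|_H,\chi)$ does not obviously give finite generation over $\cO(\Psi_G)$; your proposed semicontinuity/constructibility route is not developed, and it is unclear how a Bushnell--Kutzko idempotent would furnish a uniform bound from merely pointwise data. The paper's argument (\Cref{fg}, \Cref{thm:EqCuspFinGen}) is a countable-field specialization trick: the intertwining conditions defining $\Hom_L(\rho,\bC[L/L'])$ over the lattice $L/L'$ are countably many linear equations over $\bC[L/L']$; one passes to the fraction field $\bC(L/L')$, descends to a countable subfield $K(L/L')$ containing all coefficients, and then specializes $t_i\mapsto\pi_i$ for an algebraically independent tuple $(\pi_i)$ in $\bC$, which identifies the generic-fiber dimension with a single pointwise $\Hom_L(\rho,\chi)$, finite by hypothesis. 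Finite generation then follows from \Cref{CA}. A further duality twist (\Cref{cor:CuspMultType}) is needed because this argument naturally links finite cuspidal multiplicity of the pair $(G,(H,\hat\chi))$ to finite cuspidal type of $(G,(H,\chi))$; one applies it twice.
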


\begin{introrem}$\,$
\begin{enumerate}
\item In fact, \Cref{thm:ExpFinGen} gives a more precise statement, which deduces finite generation of $(\ind_{H}^{G}\chi)^K$ from formula \eqref{eq:FinMult} for a specific character derived from $\chi$. One can strengthen other results of this paper in a similar way. However, this will require longer bookkeeping that we chose to avoid.

\item An incomplete version of \Cref{thm:ExpFinGen} appeared in \cite{AAG}, see \Cref{rem:AAG} for more details.

\item The condition \eqref{eq:FinMult} is proven in \cite{Del} and \cite[Theorem 5.1.5]{SV} for many cases, including arbitrary symmetric pairs  over a field with characteristic different from 2.
\end{enumerate}
\end{introrem}}
\subsubsection{Representations generated by $\fz$-finite distributions}
In order to prove \Cref{prop:DistChar} we prove the following lemma:
\begin{introlem}[see \Cref{sec:DistChar}]\label{lem:adm}
\Rami{Let $(G,H)$ be a  pair of   finite type. Let $\chi$ be a character of $H$.
Consider the left action of $H$ on $G$ and let}
$\xi \in \cS^*(G)^{(H,\chi)}$ be an $(H,\chi)$-equivariant $\DimaA{\fz}$-finite distribution. Then \RamiB{both $\cS(G)*\xi$ and $\xi*\cS(G)$} are \DimaB{finite length} representations of $G$.
\end{introlem}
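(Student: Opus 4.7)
The plan is to prove finite length of $V_1 := \cS(G)*\xi$ and $V_2 := \xi*\cS(G)$ by establishing two properties for each: (i) finite generation as a smooth $G$-representation, and (ii) $\fz$-finiteness, i.e., annihilation by an ideal of finite codimension in the Bernstein center $\fz$. Once (i) and (ii) are in hand, Bernstein's structure theorem for smooth representations---the category decomposes into blocks $\cR_s$, each equivalent to modules over a Noetherian Hecke algebra $\cH_s$ that is finitely generated as a module over its finitely generated commutative center $Z_s$---forces a finitely generated representation on which $\fz$ acts through a finite-dimensional quotient to split into finitely many block components, each a finitely generated module over a finite-dimensional quotient of $\cH_s$, and hence of finite length.

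Property (ii) is immediate in both cases: since $\fz = \End_{G\times G}(\cS(G))$ commutes with both left and right convolution, we have $z(f*\xi) = f*(z\xi)$ and $z(\xi*f) = (z\xi)*f$ for $z\in\fz$; combined with $\dim(\fz\cdot\xi)<\infty$, this forces the image of $\fz$ in $\End_G(V_i)$ to be finite-dimensional for $i=1,2$. For property (i) in the case of $V_1$, the $L_g$-equivariant surjection $T_1: \cS(G)\onto V_1$, $f\mapsto f*\xi$, factors through the twisted right-$H$-coinvariants of $\cS(G)$: from $L_h\xi=\chi(h)\xi$ and a change of variable in the convolution integral, one derives $T_1(R_{h^{-1}}f) = c(h)T_1(f)$ with $c(h)$ involving $\chi(h)$ and the modular character $\delta_H$ of $H$. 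These coinvariants are naturally isomorphic to the compactly induced representation $\ind_H^G\chi'$ for an appropriate character $\chi'$, and the finite-type hypothesis on $(G,H)$ yields finite generation of $(\ind_H^G\chi')^K$ over $\cH_K(G)$, whence finite generation of $V_1^K$.

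For $V_2$, no such direct factoring through a compact induction is available: the left-equivariance of $\xi$ does not pass to a relation on the translates of $f$ when $\xi$ appears on the left of the convolution. The intended route combines the involution $\iota(g) = g^{-1}$---which exchanges left and right convolution and turns $\xi$ into the right-$(H,\chi^{-1})$-equivariant distribution $\check\xi = \iota^*\xi$---with a block-by-block analysis: property (ii) already restricts $V_2$ to finitely many Bernstein blocks, and within each block one argues, using Bernstein's Noetherian property of $\cH_s$ together with an embedding of $V_2$ into an appropriate smooth induction $\mathrm{Ind}_H^G\chi''$ and comparison with the compactly induced component $(\ind_H^G\chi'')_s$ that is finitely generated by the finite-type hypothesis, that each $V_{2,s}$ is a submodule of a finitely generated $\cH_s$-module and therefore itself finitely generated. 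The main technical obstacle is precisely this $V_2$ step, together with the careful bookkeeping of modular characters that both cases require; once finite generation of $V_2$ is secured, the passage to finite length is routine.
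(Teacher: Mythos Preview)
Your treatment of $V_1=\cS(G)*\xi$ is correct and matches the paper: the surjection $\cS(G)\onto V_1$ factors through $\ind_H^G\chi'$, which is locally finitely generated by the finite-type hypothesis, and then $\fz$-finiteness forces finite length via Bernstein's theory.

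The gap is in your argument for $V_2=\xi*\cS(G)$. You correctly observe the embedding $V_2\hookrightarrow \Ind_H^G\chi''=\widetilde{\ind_H^G\chi''}$, but the sentence ``each $V_{2,s}$ is a submodule of a finitely generated $\cH_s$-module and therefore itself finitely generated'' is not justified: the full induction $\Ind_H^G\chi''$ is the smooth dual of a compact induction and is \emph{not} finitely generated in general, so Noetherianity of $\cH_s$ alone does not help. The involution $\iota$ you introduce does not resolve this either, since it converts the left-$H$-equivariance into right-$H$-equivariance but leaves you with the same structural problem on the other side.

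The paper closes this gap with a short duality trick (its Proposition~\ref{prop:LocFinAdm}(ii)): given $V_2\subset\widetilde{\pi}$ with $\pi=\ind_H^G\chi''$ locally finitely generated, let $\rho_\bot\subset\pi$ be the joint kernel of all functionals in $V_2$. Then $V_2\subset(\rho_\bot)^\bot\cong\widetilde{\pi/\rho_\bot}$. The quotient $\pi/\rho_\bot$ is a $\fz$-finite quotient of the locally finitely generated $\pi$, hence has finite length by the same argument you used for $V_1$; its contragredient therefore also has finite length, and so does the subrepresentation $V_2$. This bypasses entirely the attempt to prove $V_2$ finitely generated directly. Your block-by-block sketch could in principle be repaired along similar lines (pass to $K$-invariants, use that $V_2^K$ lands in the dual of the finite-dimensional space $(\ind_H^G\chi'')^K/I\cdot(\ind_H^G\chi'')^K$ for the finite-codimension ideal $I\subset\fz(\cH_K)$ annihilating $V_2$), but the annihilator formulation is cleaner and avoids the bookkeeping you flagged as an obstacle.
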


This lemma implies the following corollary:

\begin{introcor}\label{cor:adm}
Let $\xi \in \cS^*(G)$ be a $\fz$-finite distribution. Then $\RamiB{\cS(G)*\xi * \cS(G)}$ is a \DimaB{finite length} representation of $G\times G$.
\end{introcor}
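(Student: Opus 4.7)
The plan is to reduce the claim to \Cref{lem:adm} applied to the ``group'' symmetric pair $(\tilde G, \Delta G) := (G \times G, \Delta G)$, which is of finite type (by a special case of the cited symmetric pair results, or directly: $\Delta G$ acts on $G/P \times G/P$ with finitely many orbits, parametrized by the Weyl group, and the multiplicities $\dim \Hom_{\Delta G}(\pi_1 \boxtimes \pi_2, \chi)$ are bounded by $1$ via Schur's lemma). Via the submersion $\pi \colon \tilde G \to G$, $\pi(g_1, g_2) := g_1 g_2^{-1}$, we identify $\tilde G / \Delta G$ with $G$ equipped with the two-sided $\tilde G$-action $(h_1, h_2) \cdot y = h_1 y h_2^{-1}$, so that $\pi^*$ becomes a $\tilde G$-equivariant isomorphism $\cS^*(G) \xrightarrow{\sim} \cS^*(\tilde G)^{\Delta G}$. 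Let $\tilde\xi := \pi^*\xi$, a right-$\Delta G$-invariant distribution on $\tilde G$.

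The main technical step is to verify that $\tilde\xi$ is $\fz(\tilde G) = \fz \otimes \fz$-finite. One computes that the pushforward $\pi_* \colon \cS(\tilde G) \to \cS(G)$ sends elementary tensors $f_1 \otimes f_2$ to $f_1 * \check f_2$ (where $\check f(g) := f(g^{-1})$). Since $\fz$ commutes with convolution and is preserved by inversion via an involution $\iota$ of $\fz$, the map $\pi_*$ intertwines the action of $\fz \otimes \fz$ on $\cS(\tilde G)$ with the action of $\fz$ on $\cS(G)$ through the ring homomorphism $z_1 \otimes z_2 \mapsto z_1 \iota(z_2)$. Dualizing gives $(\fz \otimes \fz) \cdot \tilde\xi = \pi^*(\fz \cdot \xi)$, which is finite-dimensional by the $\fz$-finiteness of $\xi$.

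Applying \Cref{lem:adm} in its right-equivariant form (which follows from the stated left-equivariant version by conjugating with the inversion $\eta \mapsto \check\eta$, under which $\fz$-finiteness is preserved and a right-$(\Delta G, \chi)$-equivariant distribution becomes a left-$(\Delta G, \chi^{-1})$-equivariant one), we conclude that $\cS(\tilde G) * \tilde\xi$ has finite length as a $\tilde G$-representation. A direct computation then gives $((f_1 \otimes f_2) * \tilde\xi)(x_1, x_2) = (f_1 * \xi * \check f_2)(x_1 x_2^{-1}) = \pi^*(f_1 * \xi * \check f_2)(x_1, x_2)$, and since $f_2 \mapsto \check f_2$ is a bijection of $\cS(G)$, the map $\pi^*$ restricts to a $\tilde G$-equivariant isomorphism $V \xrightarrow{\sim} \cS(\tilde G) * \tilde\xi$. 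Hence $V = \cS(G) * \xi * \cS(G)$ has finite length as a $G \times G$-representation.

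The hard part will be verifying the $\fz(\tilde G)$-finiteness of $\tilde\xi$: it requires carefully matching the two copies of $\fz$ acting on $\cS(\tilde G)$ (via the left and right factors of $\tilde G$) with the single $\fz$ acting on $\cS(G)$ through pullback along the quotient by the diagonal, and tracking the intervening inversion involution.
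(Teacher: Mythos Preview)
Your argument is correct and is the natural reading of the paper's one-line ``this lemma implies the following corollary'': apply \Cref{lem:adm} to the symmetric pair $(G\times G,\Delta G)$ after pulling $\xi$ back along the multiplication map.  One technical point deserves correction: the identification $\fz(\tilde G)=\fz\otimes\fz$ is not literally true, since tensor product does not commute with the infinite product over Bernstein components; the natural map $\fz\otimes\fz\to\fz(\tilde G)$ is injective but not surjective.  What saves you is that any test function $f\in\cS(\tilde G)$ lies in finitely many Bernstein components $\Omega_1\times\Omega_2$, and on each such component the map $\fz\otimes\fz\to A_{\Omega_1}\otimes A_{\Omega_2}$ \emph{is} surjective; hence $\fz(\tilde G)\cdot f=(\fz\otimes\fz)\cdot f$ for every $f$, and dually $\fz(\tilde G)\cdot\tilde\xi=(\fz\otimes\fz)\cdot\tilde\xi=\pi^*(\fz\cdot\xi)$ is finite-dimensional as you claim.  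With this clarification the proof is complete.

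An alternative route, slightly closer to the internal logic of \S\ref{sec:DistChar}, avoids passing to $\tilde G$: observe that $\cS(G)$ is locally finitely generated as a $G\times G$-representation (indeed $\cS(G)^{K\times K}=\cH_K(G)$ is cyclic over $\cH_K(G)\otimes\cH_K(G)$, generated by $e_K$), and $V$ is a $\fz(G\times G)$-finite subrepresentation of the smooth dual $\widetilde{\cS(G)}$, so \Cref{prop:LocFinAdm}\eqref{it:AdmSub} applies directly.  This bypasses the need to check that $(\tilde G,\Delta G)$ is of finite type, but still requires the same $\fz(G\times G)$-finiteness computation you carry out; the two approaches are of comparable length.
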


\subsubsection{Fuzzy balls}
The theory of \DimaB{special} balls was developed in \cite{S} based on \cite{H,H2} for $\mathbf{G}=GL_n$. This theory implies that any irreducible representation is annihilated by a certain collection of elements of the Hecke algebra. In \Cref{subsec:PrelFuzzy,sec:balls} we adapt this statement to representations of a general reductive group (see \Cref{thm:FuzzyFin}).

\subsubsection{Relations between convolution and exponentiation}
The exponentiation maps an open neighborhood $U$ of zero in the Lie algebra of $G$ to $G$. This gives rise to a map of the algebra $\cS(U)$ of smooth compactly supported functions on $U$ (with respect to convolution) to the Hecke algebra of $G$. Unfortunately, this map is not a homomorphism. In \Cref{prop:FouExp} we show that it does behave as a homomorphism on certain pairs of functions.

\subsection{Idea of the proof}

\subsubsection{Sketch of the proof of  \Cref{thm:main}}

We first analyze the representation generated by $\xi$ under the two-sided action of the Hecke algebra  $\cH(G)$, which has \DimaB{finite length} by \Cref{cor:adm}.
Then we use the theory of \DimaB{special} balls, that produces, for any \DimaB{finite length} representation, a large collection of elements   in the Hecke algebra $\cH(G)$ that annihilate it. Those elements will also annihilate $\xi$. In other words, for certain   $e_B \in \cH(G)$ we have the following vanishing of convolutions
\begin{equation}\label{=:van}
e_B * \xi=0
\end{equation}
Next we want to linearize this information. For this we use the exponentiation map and \Cref{prop:FouExp}. Unfortunately,  \Cref{prop:FouExp} is not directly applicable to the pair $(e_B,\xi)$. However, we use the vanishing \eqref{=:van}
to construct other vanishing convolutions, to which \Cref{prop:FouExp} is applicable. Thus we get that  certain convolutions on the Lie algebra vanish. Those vanishings imply  the desired restriction on the wave front set.

\subsubsection{Sketch of the proof of \Cref{thm:Dense}}
Let us assume for simplicity that $\chi_i$ are trivial and $H_i$ are unimodular.
To prove \Cref{thm:Dense} we first note that $\cI$ is dual to the space $\cS(G)_{H_1\times H_2}$ of  $(H_1\times H_2)$-coinvariants of $\Sc(G)$. We can decompose $\Sc(G) $ to a direct sum with respect to Bernstein blocks. This leads to a decomposition of $\cS(G)_{H_1\times H_2}$. The finite type assumption implies that each summand is finitely generated over $\DimaA{\fz}$. Thus Artin-Rees Lemma and Hilbert's Nullstellensatz imply that the space of $\DimaA{\fz}$-finite functionals on those summands is dense in the space of arbitrary functionals.

For technical reasons, it is more convenient to work with unions of Bernstein blocks which correspond to compact open subgroups of $G$ than with individual Bernstein blocks.


\subsection{Future applications}

We believe that \Cref{cor:WF}  can be used in order to prove the following analog of Harish-Chandra's density theorem \cite[Theorem 3.1]{HCQ}.

\begin{introconj}
Suppose that $G$ is quasisplit. Let $B$ be a Borel subgroup of $G$, $U$ be the nilradical of $B$, $\psi$ be a non-degenerate character of $U$,  $H\subset G$ be a reductive spherical subgroup and $X=G/H$. Let $\cO$ be the union of all open $B$-orbits in $X$.

Then the sum of the  one-dimensional spaces $\Sc^*(Ux)^{U,\psi}$, where $x$ ranges over $\cO$, is dense in $\Sc^*(X)^{U ,\psi }$.
\end{introconj}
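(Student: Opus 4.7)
The plan is to deduce the conjecture from \Cref{cor:WF} by a local analysis at each point of $X$, reducing the statement to an approximation problem on the open $B$-orbits. First, a distribution on $X = G/H$ which is $(U,\psi)$-equivariant lifts canonically to a distribution on $G$ which is left $(U,\psi)$-equivariant and right $H$-invariant. Since $H$ is a reductive spherical subgroup, the pair $(G,H)$ is of finite type by \Cref{thm:FinGen} combined with the finite multiplicity results of \cite{Del,SV}. For $(G,U)$ the finite type property should follow from finite multiplicity of all (possibly degenerate) Whittaker-type models together with finiteness of $U\backslash G/B$ via the Bruhat decomposition. Granting this, \Cref{cor:WF} reduces the conjecture to showing that every $\xi \in \Sc^*(X)^{U,\psi}$ whose wavefront set at every point lies in the nilpotent cone can be approximated in the weak-$*$ topology by finite sums of elements of $\Sc^*(Ux)^{U,\psi}$ with $x \in \cO$.

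Next I would carry out a local analysis of such a $\xi$. At a point $g \in G$, right $H$-invariance forces $WF_g(\xi) \subset \fh^{\perp}$, while left $(U,\psi)$-equivariance forces $WF_g(\xi) \subset (\Ad(g)\fu)^{\perp}$, since the principal symbol of the first-order differential operator $Y^L - d\psi(Y)$ at $g$ is $\eta \mapsto \langle \eta, \Ad(g)Y\rangle$ for $Y \in \fu$. Combined with \Cref{thm:main}, this yields
\[
WF_g(\xi) \subset \cN \cap (\Ad(g)\fu + \fh)^{\perp} \subset \fg^*.
\]
The central claim, which I call $(\star)$, is that if $gH \notin \overline{\cO}$ then this intersection is trivial, so $\xi$ is smooth at $gH$. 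Sphericality of $(G,H)$ gives $\fb + \fh = \fg$, and on open $B$-orbits $\Ad(g)\fb + \fh = \fg$; claim $(\star)$ strengthens this by asserting that on the complement of $\overline{\cO}$, every non-zero element of $(\Ad(g)\fu + \fh)^\perp$ is non-nilpotent. Once $(\star)$ is in hand, $\Supp(\xi) \subset \overline{\cO}$, after which smoothness on $\cO$ combined with the non-degeneracy of $\psi$ will confine $\xi$ to individual $U$-orbits within $\cO$.

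On each open $B$-orbit $Bx_i \subset \cO$, sphericality combined with openness forces the $U$-action to be generically free, yielding a decomposition $Bx_i \cong U \times \Sigma_i$ where $\Sigma_i$ is a $T$-torsor (possibly modulo a finite group). A $(U,\psi)$-equivariant distribution on this product is canonically of the form $\delta^{U,\psi} \otimes \nu_i$ for a distribution $\nu_i$ on $\Sigma_i$; approximating $\nu_i$ in $\Sc^*(\Sigma_i)$ by finite sums of Dirac measures $\delta_t$ then produces the desired finite approximations of $\xi$ by elements of $\Sc^*(Ux)^{U,\psi}$ with $x \in \cO$.

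The main obstacle will be claim $(\star)$: that outside the closure of the union of open $B$-orbits, the annihilator of $\Ad(g)\fu + \fh$ meets the nilpotent cone only at zero. This is essentially a statement about the interplay between the nilpotent cone and the orbit geometry of the spherical variety $X$, and I expect its proof to require Luna--Vust theory of spherical embeddings together with invariant-theoretic tools (such as the Kostant--Rallis decomposition in the symmetric case) and the non-degeneracy of $\psi$. A secondary subtlety is to glue the per-orbit approximations into a single global approximation in $\Sc^*(X)^{U,\psi}$, but this should follow from finiteness of the set of open $B$-orbits together with a routine partition of unity argument on $\cO$.
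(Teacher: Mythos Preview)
The statement you are attempting to prove is explicitly a \emph{conjecture} in the paper, appearing in the ``Future applications'' subsection; the paper offers no proof. The authors say only that they ``believe that \Cref{cor:WF} can be used'' towards it, and they point to the subsequent work \cite{AGK} for a partial step: under additional hypotheses on $H$, any $\fz$-finite distribution in $\Sc^*(X)^{U,\psi}$ supported on $X\setminus\cO$ vanishes. There is therefore no proof in the paper to compare against.

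On the substance of your outline, the central claim $(\star)$ is either vacuous or false. Suppose $gH\notin\cO$, so that $BgH$ is not open in $G$ and its conormal space at $g$ is nonzero. Under the identification $T_g^*G\cong\fg^*\cong\fg$ via an invariant form (and up to $g\leftrightarrow g^{-1}$ depending on convention), this conormal is $\fb^\perp\cap(\Ad(g)\fh)^\perp=\fu\cap(\Ad(g)\fh)^\perp$, hence consists of nilpotent elements. Since $\fu\subset\fb$ gives $(\fb+\Ad(g)\fh)^\perp\subset(\fu+\Ad(g)\fh)^\perp$, this nonzero nilpotent subspace lies inside $\cN\cap(\fu+\Ad(g)\fh)^\perp$. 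Thus the intersection in $(\star)$ is nonzero at \emph{every} point of $X\setminus\cO$, in particular outside $\overline{\cO}$; the wavefront bound from \Cref{thm:main} combined with equivariance cannot force smoothness off $\cO$. Observe also that the non-degeneracy of $\psi$ plays no role whatsoever in the wavefront estimate, since by \Cref{lem:WFSmooth}\eqref{it:equiv} equivariance under a character gives the same bound on $WF$ as invariance; non-degeneracy must enter through a genuinely different mechanism, as it does in \cite{AGK}. A secondary gap: $(G,U)$ does not satisfy the hypothesis of \Cref{thm:FinGen} as you claim, since for degenerate characters $\chi$ of $U$ (for instance $\chi=1$) the space $\Hom_U(\rho,\chi)$ is typically infinite-dimensional; you would need the sharper \Cref{thm:ExpFinGen}, which requires finite multiplicity only for the single dual character $\hat\psi$.
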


In \DimaB{the subsequent} paper \cite{AGK} we prove a non-Archimedean analog of \cite{AG_ZR}, which we consider as a step towards this conjecture. Namely, we use \Cref{thm:main} in order to prove that under certain conditions on $H$  any $\fz$-finite distribution $\xi \in \Sc^*(X)^{U,\psi }$ which is supported in the complement to $\cO$  vanishes.

In \DimaB{the subsequent} work  \cite{AGM} we prove that the set of cotoric elements is open and dense in $G$ if $H_1,H_2$ are spherical subgroups. By
\Cref{thm:reg} this implies that $H_1\times H_2$-  \DimaB{relative} characters are smooth almost everywhere.
In fact, in \cite{AGM} we show that the dimension of the variety
$$ \fS=\EitanA{\{(g,\alpha) \in G \times \g^*\, \mid\, \alpha \text{ is nilpotent, } \langle \alpha, \fh_1 \rangle =0, \ \langle \alpha, Ad^*(g)(\fh_2) \rangle =0\}
}\subset T^*G$$
equals the dimension of $G$. \Cref{thm:main} implies that the wavefront set of any $H_1\times H_2$-  \DimaB{relative} character lies in $\fS$. Thus we obtain a certain version of holonomicity for \DimaB{relative} characters.


\subsection{Structure of the paper}

In \Cref{sec:prel} we give the necessary preliminaries
on the Bernstein center.

In \Cref{sec:DistChar} we prove \Cref{lem:adm} and deduce \Cref{prop:DistChar}.

In \Cref{sec:Dense} we prove \Cref{thm:Dense}.

In \Cref{sec:WF} we prove \Cref{thm:main}. In \Cref{subsec:WF,subsec:PrelFuzzy} we give the necessary preliminaries on  wavefront set and on \DimaB{special} balls. In \Cref{sec:PfWF} we deduce \Cref{thm:main} from two main ingredients, which we prove  in \Cref{subsec:PfEigenFinSupp,subsec:pfFouExp}.
In \Cref{subsec:PfEigenFinSupp} we prove the vanishing \eqref{=:van}. In \Cref{subsec:pfFouExp} we prove \Cref{prop:FouExp} that states that exponentiation commutes with convolution in certain cases.
In \Cref{sec:reg} we prove  \Cref{thm:reg} and \Cref{lem:SymSmooth}, which allows to specialize \Cref{thm:reg} to the  symmetric pair case and thus obtain a generalization of \cite[\S 5]{RR}.

In \Cref{sec:balls} we prove the statements on \DimaB{special} balls that were formulated without proof in \Cref{subsec:PrelFuzzy}.

In \Cref{sec:FinGen} we prove \Cref{thm:FinGen}.

\subsection{Acknowledgements}
\DimaA{We thank Moshe Baruch for motivating questions, Nir Avni and Erez Lapid for fruitful discussions, Joseph Bernstein  for useful remarks and Yuval Flicker for careful proofreading.}

A.A. was partially supported by NSF grant DMS-1100943, ISF grant 687/13, \DimaB{BSF grant 2012247, and a Minerva foundation grant.};

D.G. was partially supported by ISF grant 756/12,  \DimaB{ERC StG grant 637912}, and a Minerva foundation grant.

E.S. was partially supported by ISF grant 1138/10, and  ERC grant 291612.

\section{Preliminaries}\label{sec:prel}

\subsection{Conventions}

The following conventions will be used throughout the paper.

\begin{itemize}
\item Fix a non-Archimedean local field $F$.
\item All the algebraic groups and algebraic
varieties that we consider  are defined over $F$.
We will use bold letters, e.g. $\mathbf{G},\mathbf{X}$ to denote algebraic groups and varieties defined over $F$, and their non-bold versions to denote the $F$-points of these varieties, considered as $l$-spaces or $F$-analytic manifolds (in the sense of \cite{Ser}).

 \item We will use capital Latin letters to denote $F$-analytic groups and algebraic groups, and the corresponding Gothic letters to denote their Lie algebras.

\item For an $l$-group $H$
\Rami{
\begin{itemize}
\item let  $\cM(H)$ denote the category of smooth representations of $H.$

\item Let $\Delta_H$ denote the modular character of $H$, \DimaB{i.e. the quotient of the right Haar measure by the left one.}

\item If $H$ acts on an $l$-space $X$ and  $x\in X$, we denote by $H_x$ the stabilizer of $x$.

\item If  $V$ is a representation of $H$ we denote by $V_H$ the space of coinvariants
$$V_H:=V/(\Span\{v-hv\, | \, v\in V, \, h\in H\}).$$
\end{itemize}
}
\item Fix  a reductive group $\mathbf{G}$.
\item
\RamiB{ For a sheaf $\cF$ \DimaB{of $\bC$-vector spaces }on an $l$-space $X$ we denote by $\Sc(X,\cF)$ the space of compactly-supported sections and by $\Sc^*(X,\cF)$ the dual space.}

\item \DimaB{For a  compact open subgroup $K<G$ we denote by $\cH_K(G)$ the corresponding Hecke algebra.}

\end{itemize}

\subsection{Bernstein center}\label{subsec:Ber}
%
%

In this subsection we review the basics of the theory of the Bernstein center from \cite{BD}.
\begin{defn}
The \emph{Bernstein center} $\DimaA{\fz}:=\fz(G)$ is the algebra of endomorphisms of the identity functor of the category $\cM(G)$ of smooth representations of $G$.
\end{defn}

\begin{defn}\label{def:split}
Let  $K<G$ be a compact open subgroup.
\RamiB{For $V\in \cM(G)$ denote by $V^{(K)}$ the subrepresentation generated by its $K$-fixed vectors. Denote also $$\cM_K(G):= \{V \in \cM(G)\, | V \text{ = }V^{(K)}\}$$} and $$\cM_{K} (G)^{\bot}:=
\{V \in \cM(G)\, | V^K = 0\}.$$

We have a functor $\mathcal{P}_K( V):=V^K$ from $\cM(G)$ to the category $\cM(\cH_K(G))$ of modules over $\cH_K(G)$.
We call $K$ a \emph{splitting subgroup} if the category $\cM(G)$ is the direct sum of the categories $\cM_{K}(G)$ and $\cM_{K}(G)^{\bot}$, and the functor $\mathcal{P}_K:\cM_{K}(G) \to \cM(\cH_K(G))$ is an equivalence of categories. \end{defn}
\EitanA{
\begin{remark}
Recall that an abelian category $\mathcal{A}$ is a direct sum of two abelian subcategories $\mathcal{B}$ and $\mathcal{C}$, if every object of $\mathcal{A}$ is isomorphic to a direct sum of an object in $\mathcal{B}$ and an object in $\mathcal{C}$, and, furthermore, that there are no non-trivial
morphisms
between objects of $\mathcal{B}$ and $\mathcal{C}$.
\end{remark}
}

\begin{thm}[\cite{BD}]\label{thm:Ber}
$\,$
\begin{enumerate}
\item The center of the algebra $\End_G(\cS(G))$ of $G$-endomorphisms of $\cS(G)$  is the algebra $\End_{G\times G}(\cS(G)) $ and the natural morphism from $\DimaA{\fz}$ to this center is an isomorphism.

\item \label{it:split} The set of splitting subgroups defines a basis at 1 for the topology of $G$.

\item For any splitting open compact subgroup $K\subset G$ we have

\begin{enumerate}
\item \label{1}   The center $\fz(\cH_K(G))$ of the $K$-Hecke algebra is a direct summand of the Bernstein center $\DimaA{\fz}$. In particular, the natural map $\DimaA{\fz}\to \fz(\cH_K(G))$ is onto.
\item \label{2} \DimaA{The algebra} $\cH_K(G)$ is finitely generated \RamiB{as a module} over its center $\fz(\cH_K(G))$, and thus also over $\DimaA{\fz}$.
\item \label{3} The algebra $\fz(\cH_K(G))$ is finitely generated over $\bC$ and has no nilpotents.
\end{enumerate}
\end{enumerate}
\end{thm}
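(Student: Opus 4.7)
My plan is to follow Bernstein's original strategy, whose central idea is the decomposition of $\cM(G)$ into a product of ``Bernstein blocks'' indexed by inertial equivalence classes $\mathfrak{s}=[M,\sigma]_G$ of cuspidal data, and the identification of each block with the category of modules over a concrete algebra coming from parabolic induction. I would set this decomposition up first, since all three parts of the theorem follow from it in a uniform way.

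For part (1), I would use the fact that $\cS(G)$, viewed as a left $G$-module via left translation, is a projective generator of $\cM(G)$. Consequently, the functor $V\mapsto \Hom_G(\cS(G),V)$ is an equivalence from $\cM(G)$ to the category of right modules over the opposite ring $R:=\End_G(\cS(G))^{op}$, and right multiplication by $R$ on $\cS(G)$ realizes this opposite ring as right convolution by elements commuting with the left action. Under this identification, endomorphisms of the identity functor correspond exactly to the center $Z(R)$; and an element of $R$ is central iff it additionally commutes with the right regular $G$-action on $\cS(G)$, yielding $Z(R)=\End_{G\times G}(\cS(G))$.

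For part (2) and (3)(a), the key point is that every compact open $K$ contains a compact open $K'$ whose Bernstein components are exactly the blocks $\mathfrak{s}$ for which $(\Ind_P^G\sigma)^{K'}\neq 0$ (a ``finite'' set in a precise sense), so that $\cM(G)=\cM_{K'}(G)\oplus \cM_{K'}(G)^\perp$ splits off as a product of full sub-blocks. The hard technical step here is producing enough such $K'$; I would do this using parabolic induction, Bernstein's second adjointness, and the existence of progenerators for each block built from compactly induced types, which forces the $K$-fixed vectors of a progenerator to generate the whole block for $K$ small enough. Once the block decomposition refines through $K'$, the splitting property follows, and $\fz$ decomposes as a product over blocks, with $\fz(\cH_{K'}(G))$ being the factor corresponding to the blocks meeting $\cM_{K'}(G)$.

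For parts (3)(b)--(c), I would reduce to a single Bernstein block $\mathfrak{s}=[M,\sigma]_G$ and exhibit a progenerator $\Pi_{\mathfrak{s}}=\Ind_P^G(\sigma\otimes\cS(\Psi(M)))$ built from the unramified character torus $\Psi(M)$, whose endomorphism algebra $A_{\mathfrak{s}}$ is finite over its center $Z_{\mathfrak{s}}=\OO(\Psi(M)/W_\sigma)$, the ring of regular functions on a torus quotient by a finite group; this center is visibly an affine, reduced, finitely generated $\bC$-algebra. Summing over the finitely many blocks that meet $\cM_{K'}(G)$, I obtain that $\cH_{K'}(G)$ is finite over $\fz(\cH_{K'}(G))$ and that the latter is finitely generated and reduced. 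The main obstacle, and the deepest input, is the construction of these progenerators and the computation of their endomorphism rings via Jacquet--parabolic adjunctions; everything else in the theorem is extracted from this structural description.
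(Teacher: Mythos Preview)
The paper does not give its own proof of this theorem: it is stated with the attribution \cite{BD} and used as a black box throughout. Your proposal is a reasonable high-level outline of Bernstein's original argument from \cite{BD}, so you are in effect reproducing the approach of the cited reference rather than diverging from anything the present paper does.
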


\section{$\fz$-finite distributions and \DimaB{relative} characters}\label{sec:DistChar}


%
%

\subsection{\DimaB{Finite length} representations, $\fz$-finite distributions and proof of \Cref{lem:adm}}






\RamiB{We start with several criteria for admissibility of smooth representations. For these criteria we will need the following definition.

\begin{defn}
We say that a smooth representation $\pi$ of $G$ is
\begin{itemize}
\item  \emph{locally finitely generated} if for any compact open subgroup $K\subset G$ the module $\pi^K$ is  finitely generated over the Hecke algebra $\cH_K(G)$,
\item \emph{$\fz$-finite} if there exists an ideal $I \subset \fz$ of finite codimension that acts by zero on $\pi$.
\end{itemize}
\end{defn}

\begin{lem}\label{lem:ZfinAdm}
Let $\pi \in \cM(G)$ be a $\fz$-finite smooth representation. Assume that for any compact open subgroup $K\subset G$ the space $\pi^K$ is finite-dimensional.
\EitanA{Then $\pi$ has finite length.}
\end{lem}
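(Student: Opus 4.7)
The plan is to combine the Bernstein decomposition of $\cM(G)$ with the Hecke--algebra equivalence afforded by a splitting subgroup, as supplied by \Cref{thm:Ber}.

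First I would decompose the Bernstein center along Bernstein components as $\fz = \prod_\Omega \fz_\Omega$, with pairwise orthogonal central idempotents $e_\Omega$. Given an ideal $I \subset \fz$ of finite codimension annihilating $\pi$, the finite-dimensional algebra $\fz/I$ contains only finitely many non-zero pairwise orthogonal idempotents, so $e_\Omega \in I$ for all but finitely many $\Omega$. Hence $\pi = \bigoplus_{\Omega \in S} e_\Omega \pi$ for some finite set $S$ of Bernstein components, and it suffices to bound the length of each summand. I may therefore assume $\pi$ lies in a single Bernstein block $\cM_\Omega$.

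Next, using the fact that splitting subgroups form a neighborhood basis of $1$ in $G$ (\Cref{thm:Ber}), I would choose a splitting compact open subgroup $K \subset G$ small enough that $\cM_\Omega \subset \cM_K(G)$. Existence of such $K$ rests on Bernstein--Deligne theory: $\cM_\Omega$ admits a finitely generated projective generator $P_\Omega$, whose smoothness forces $P_\Omega^K$ to generate $P_\Omega$ once $K$ is small enough; every object of $\cM_\Omega$, being a quotient of direct sums of copies of $P_\Omega$, is then generated by its $K$-fixed vectors. Because $K$ is splitting, $\cM(G) = \cM_K(G) \oplus \cM_K(G)^\perp$ as abelian categories, and the functor $\mathcal{P}_K : V \mapsto V^K$ is an equivalence $\cM_K(G) \simeq \cM(\cH_K(G))$. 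Under this equivalence the length of $\pi$ in $\cM(G)$ equals the length of $\pi^K$ as an $\cH_K(G)$-module, which is at most $\dim_\bC \pi^K < \infty$ since every $\cH_K(G)$-submodule of $\pi^K$ is in particular a $\bC$-subspace.

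The main point requiring care will be producing a \emph{single} splitting $K$ with $\cM_\Omega \subset \cM_K(G)$: \Cref{thm:Ber} alone only asserts that splitting subgroups form a basis at $1$, so isolating a concrete $K$ requires combining this with the existence of a finitely generated projective generator in each Bernstein block. Once this is in hand, the remainder is formal: $\fz$-finiteness supplies the reduction to finitely many blocks, and finite-dimensionality of $\pi^K$ together with the equivalence of categories yields the length bound.
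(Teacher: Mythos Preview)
Your argument is correct, and the final step (passing to $\cH_K(G)$-modules via the equivalence and bounding length by $\dim \pi^K$) coincides with the paper's. The difference lies in how you produce a splitting $K$ with $\pi \in \cM_K(G)$.

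You go through the full Bernstein component decomposition $\fz = \prod_\Omega \fz_\Omega$, argue that only finitely many $e_\Omega$ survive in $\fz/I$, and then for each surviving block invoke the existence of a finitely generated projective generator to manufacture a suitable $K$. This is fine, but as you yourself flag, it imports more from Bernstein--Deligne than what is recorded in \Cref{thm:Ber}.

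The paper avoids this detour entirely. It works directly with the idempotents $i_K \in \fz$ attached to splitting subgroups (the ones projecting onto $\cM_K(G)$), observes that as $K$ shrinks the images $j_K \in \fz/I$ form an increasing chain of idempotents in a finite-dimensional algebra and hence stabilize, and concludes that $\pi^{(K')} = \pi^{(K)}$ for all splitting $K' \subset K$; since splitting subgroups form a basis at $1$, this forces $\pi = \pi^{(K)}$. This argument uses only the statements listed in \Cref{thm:Ber} and never mentions individual Bernstein components or their projective generators.

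In short: your route is the standard ``reduce to a block, then pick $K$'' argument and is perfectly valid; the paper's route is a more self-contained stabilization trick that sidesteps the need to know anything about the internal structure of Bernstein blocks.
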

\begin{proof}

It is enough to show that $\pi\subset \cM_K(G)$ for some splitting subgroup $K\subset G$. Let  $I \subset \fz$ be an ideal of finite codimension that acts by zero on $\pi$.
For any splitting $K$ denote by $i_K \subset \fz$ the idempotent that acts by identity on $\cM_K(G)$ and by zero on $\cM_{K} (G)^{\bot}$. Let $j_K$ be the image of $i_K$ in $\fz/I$. Since $\fz/I$ is finite-dimensional there exists a splitting $K$ such that $j_K=j_{K'}$ for any splitting subgroup $K'\subset K$,  thus $\pi^{K'}\subset\pi^{(K')}=\pi^{(K)}$ and thus, by \Cref{thm:Ber}\eqref{it:split}, $\pi=\pi^{(K)}$.
\end{proof}

\begin{cor}\label{cor:zLocFinGenAdm}
Any $\fz$-finite locally finitely generated $\pi \in \cM(G)$ \DimaB{has finite length}.
\end{cor}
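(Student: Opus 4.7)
The plan is to reduce to Lemma~\ref{lem:ZfinAdm} by showing that $\pi^K$ is finite-dimensional for every compact open subgroup $K\subset G$. Since $\pi^{K''}\supseteq \pi^K$ whenever $K''\subseteq K$, and since the splitting subgroups form a neighborhood basis of $1$ by \Cref{thm:Ber}\eqref{it:split}, it suffices to prove finite-dimensionality of $\pi^{K'}$ for an arbitrary splitting open compact subgroup $K'\subset G$.

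First I would package the $\fz$-finiteness of $\pi$: by hypothesis there is an ideal $I\subset \fz$ of finite codimension that annihilates $\pi$, so the action of $\fz$ on any $K'$-invariant subspace $\pi^{K'}$ factors through the finite-dimensional $\bC$-algebra $\fz/I$.

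Next I would combine this with the Bernstein--Deligne structure of Hecke algebras. Because $K'$ is splitting, \Cref{thm:Ber}\eqref{2} tells us that $\cH_{K'}(G)$ is a finitely generated module over its center $\fz(\cH_{K'}(G))$, and \Cref{thm:Ber}\eqref{1} tells us that $\fz(\cH_{K'}(G))$ is a direct summand of $\fz$ (in particular, a quotient of $\fz$). The local finite generation of $\pi$ gives that $\pi^{K'}$ is finitely generated over $\cH_{K'}(G)$, hence finitely generated over $\fz(\cH_{K'}(G))$, hence finitely generated over $\fz$ itself. Composing with the surjection $\fz \onto \fz(\cH_{K'}(G))$, the action of $\fz(\cH_{K'}(G))$ on $\pi^{K'}$ factors through the image of $\fz/I$, which is finite-dimensional. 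Thus $\pi^{K'}$ is a finitely generated module over a finite-dimensional algebra and is therefore itself finite-dimensional.

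Having verified the hypothesis of \Cref{lem:ZfinAdm} for $\pi$, I would conclude that $\pi$ has finite length, completing the proof. There is no substantial obstacle: the whole argument is a direct bookkeeping exercise combining (i) finite generation of $\pi^{K'}$ over the Hecke algebra, (ii) the module-finiteness of the Hecke algebra over its center, and (iii) the factoring of the $\fz$-action through a finite-dimensional quotient; the only mild subtlety is the reduction from an arbitrary compact open $K$ to a splitting subgroup, handled by inclusion $\pi^K \subseteq \pi^{K'}$.
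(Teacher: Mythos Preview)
Your argument is correct and follows exactly the same route as the paper: reduce via \Cref{thm:Ber}\eqref{it:split} to splitting subgroups, then combine local finite generation with \Cref{thm:Ber}(\ref{1},\ref{2}) and $\fz$-finiteness to get $\dim\pi^{K'}<\infty$, and apply \Cref{lem:ZfinAdm}. The paper's proof is just a one-line compression of what you wrote.
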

\begin{proof}
By \Cref{lem:ZfinAdm} and \Cref{thm:Ber}\eqref{it:split} it is enough to show that $\pi^K$ is finite-dimensional for any  splitting subgroup $K\subset G$. This follows from \Cref{thm:Ber}(\ref{1},\ref{2}).
\end{proof}

\begin{prop}\label{prop:LocFinAdm}
Let $\pi \in \cM(G)$ be  locally finitely generated. Then
\begin{enumerate}[(i)]
\item \label{it:AdmQuo}any $\fz$-finite quotient $\rho$ of $\pi$ \DimaB{has finite length},
\item \label{it:AdmSub}any $\fz$-finite subrepresentation $\rho$ of $\widetilde{\pi}$ is \DimaB{has finite length}.
\end{enumerate}
\end{prop}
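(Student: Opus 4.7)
The plan is to derive both parts by reducing to the already-proved finite length criterion \Cref{cor:zLocFinGenAdm} (for (i)) and \Cref{lem:ZfinAdm} (for (ii)).

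For part \eqref{it:AdmQuo}, the key observation is that for any compact open $K<G$, the functor $V\mapsto V^K$ is exact on $\cM(G)$, since averaging over $K$ provides a projector $V\onto V^K$ that is natural in $V$. Consequently, if $\pi\onto\rho$ is a surjection and $\pi$ is locally finitely generated, then $\rho^K$ is a quotient of $\pi^K$, hence finitely generated over $\cH_K(G)$. Thus $\rho$ is locally finitely generated, and since it is $\fz$-finite by hypothesis, \Cref{cor:zLocFinGenAdm} gives that $\rho$ has finite length.

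For part \eqref{it:AdmSub}, I would show that $\rho^K$ is finite-dimensional for every compact open $K<G$ and then invoke \Cref{lem:ZfinAdm}. For this, one first identifies $\widetilde{\pi}^K=(\pi^K)^*$: a smooth $K$-fixed functional on $\pi$ is the same as a linear functional on the coinvariants $\pi_K$, and the averaging projector identifies $\pi_K\cong\pi^K$ since $K$ is compact open. Moreover, this identification is compatible with the $\fz$-module structure via the transposed action. Now by \Cref{thm:Ber}\eqref{1} and \eqref{2}, $\cH_K(G)$ is finitely generated as a module over (its center, which is a quotient of) $\fz$; hence the finitely generated $\cH_K(G)$-module $\pi^K$ is also finitely generated over $\fz$. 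Let $I\subset\fz$ be an ideal of finite codimension that annihilates $\rho$. Under the transposed action, $I$ annihilates $\phi\in(\pi^K)^*$ iff $\phi$ vanishes on $I\cdot\pi^K$. Since $\pi^K/I\cdot\pi^K$ is a finitely generated module over the finite-dimensional algebra $\fz/I$, it is finite-dimensional, and therefore
\[
\rho^K \subset (\widetilde{\pi}^K)^{I=0} = \bigl(\pi^K/I\cdot\pi^K\bigr)^*
\]
is finite-dimensional. \Cref{lem:ZfinAdm} then yields that $\rho$ has finite length.

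The only subtle point is the identification $\widetilde{\pi}^K=(\pi^K)^*$ as $\fz$-modules; once this is set up, the argument is a formal combination of the Artin-Rees/Nakayama-style observation that finitely generated modules modulo a finite-codimension ideal are finite-dimensional, together with the structure theorem for $\cH_K(G)$ over $\fz$. No obstacle is anticipated beyond carefully matching the two $\fz$-actions via the contragredient pairing.
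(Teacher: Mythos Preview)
Your proof of part \eqref{it:AdmQuo} matches the paper's exactly. For part \eqref{it:AdmSub} your argument is correct (once you restrict to splitting $K$ when invoking \Cref{thm:Ber}\eqref{1},\eqref{2}, and then pass to arbitrary $K$ via \Cref{thm:Ber}\eqref{it:split}), but the paper takes a shorter route that reduces \eqref{it:AdmSub} to \eqref{it:AdmQuo} directly. Namely, let $\rho_{\bot}\subset\pi$ be the joint kernel of all functionals in $\rho$; then $\rho\subset(\rho_{\bot})^{\bot}\cong\widetilde{\pi/\rho_{\bot}}$. The quotient $\pi/\rho_{\bot}$ is $\fz$-finite (the ideal $I$ annihilating $\rho$ sends $\pi$ into $\rho_{\bot}$ by the same transposition argument you use), so by part \eqref{it:AdmQuo} it has finite length, and hence so does its contragredient and any subrepresentation thereof. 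This avoids the explicit computation with $\pi^K/I\pi^K$ and the appeal to \Cref{lem:ZfinAdm}; your approach, on the other hand, makes the finite-dimensionality of $\rho^K$ completely explicit and does not rely on knowing that the contragredient preserves finite length.
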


\begin{proof}
Part \eqref{it:AdmQuo} follows from \Cref{cor:zLocFinGenAdm}. To prove part \eqref{it:AdmSub} denote by $\rho_{\bot} \subset \pi$ the joint kernel of all the functionals in $\rho$. Then $\rho \subset (\rho_{\bot})^{\bot}\cong\widetilde{\pi/\rho_{\bot}}$. Since $\pi/\rho_{\bot}$ \DimaB{has finite length} by part \eqref{it:AdmQuo}, we get that $\rho$ \DimaB{has finite length}.
\end{proof}

\begin{proof}[Proof of \Cref{lem:adm}]
$\,$
\begin{enumerate}[(i)]
\item \label{it:VKFin} \emph{Proof that $\Sc(G)*\xi$ \DimaB{has finite length}.}\\
Consider the natural epimorphism $\Sc(G)\onto \Sc(G)*\xi$. It is easy to see that there exists a character $\chi'$ of $H$ such that this epimorphism factors through $\ind_H^G(\chi')$. Since $(G,H)$ has finite type, $\ind_H^G(\chi')$ is locally finitely generated and thus, by \Cref{prop:LocFinAdm}\eqref{it:AdmQuo},
$\Sc(G)*\xi$ \DimaB{has finite length}.

%

\item \emph{Proof that  $\xi*\Sc(G)$ \DimaB{has finite length}.}\\
Let $G$ act on itself by $g \cdot x = xg^{-1}$. This gives rise to an action of $G$ on $\Sc^*(G)^{(H,\chi)}$. Let $\cF$ be the natural equivariant sheaf on $X= G/H$ such that $\Sc^*(G)^{(H,\chi)}\cong\Sc^*(X,\cF)$. Consider $\xi$ as an element in $\Sc^*(X,\cF)$. Then
$$\xi*\Sc(G) \into \widetilde{\Sc(X,\cF)}=\widetilde{\ind_H^G(\chi'')}$$
for some character $\chi''$ of $H$, and \Cref{prop:LocFinAdm}\eqref{it:AdmSub} implies  that
$\xi*\Sc(G)$ \DimaB{has finite length}.
\end{enumerate}
\end{proof}
}
%
\subsection{\DimaB{Relative} characters and proof of \Cref{prop:DistChar}}$\,$

Let us recall the definition of  \DimaB{relative} character.
\begin{defn}\label{def:SphChar}
Let $(\pi,V)$ be a \DimaB{finite length} representation of $G$. Let $(\tilde \pi,\tilde V) $ be its smooth contragredient. Let $H_1,H_2 \subset G$ be subgroups and $\chi_1,\chi_2$ be their characters. Let $l_{1}\in(V^*)^{H_1,\chi_1^{-1}}$ and $l_{2}\in(\tilde V^*)^{H_2,\chi_2^{-1}}$ be equivariant functionals. The \emph{\DimaB{relative} character} $\Xi^{\pi}_{l_1,l_2} \in \cH(G)^*$ is the generalized function on $G$ given by
\begin{equation}
\Xi^{\pi}_{l_1,l_2}(f) := \langle l_2 , \pi^*(f) l_1 \rangle.
\end{equation}

We refer to such \DimaB{relative} characters as  $(H_1\times H_2, \chi_1\times \chi_2)$-\DimaB{relative} characters of $\pi$.
\end{defn}
Since we can identify $\cI$ with the space  $(\cH(G)^*)^{H_{1}\times H_{2},\chi_1\times \chi_2}$ of invariant generalized functions, we can consider the \DimaB{relative} character as an element in $\cI$.

\begin{lem}[see \Cref{subsec:FinDual}]\label{lem:admFin}
Let $(G,H)$ be a  pair of  finite type. Let $V$ be a \DimaB{finite length} representation of $G$ and $\chi$ be a character of $H$. Then $\dim V_{(H,\chi)}< \infty$.
\end{lem}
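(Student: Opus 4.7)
The plan is to dualize and reduce the question to a finite-generation statement built directly into the finite type hypothesis. First, since a vector space is finite-dimensional if and only if its algebraic dual is, it is enough to show that
\[
(V_{(H,\chi)})^{*} \;\cong\; \Hom_{H}(V,\chi)
\]
is finite-dimensional. By smooth Frobenius reciprocity this equals $\Hom_{G}(V, \Ind_{H}^{G}\chi)$.

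Next, I would invoke the standard identification $\Ind_{H}^{G}\chi \cong \widetilde{\ind_{H}^{G}\chi'}$, where $\chi'$ is obtained from $\chi^{-1}$ by twisting by the modular character $\Delta_{H}/\Delta_{G}|_{H}$, together with the natural isomorphism
\[
\Hom_{G}(V, \widetilde{W}) \;\cong\; \Hom_{G}(W, \widetilde{V}),
\]
which is valid for all smooth representations $V,W$ of $G$. Both sides identify with the space of $G$-invariant bilinear pairings $V \times W \to \bC$ that are separately smooth; the needed automatic smoothness in one argument follows at once from $G$-invariance together with the existence of a compact open stabilizer for any smooth vector in the other argument. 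Combining these two identifications yields
\[
\Hom_{H}(V,\chi) \;\cong\; \Hom_{G}(\ind_{H}^{G}\chi',\, \widetilde{V}).
\]

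Now I would bring in the finite length of $V$. Then $\widetilde V$ also has finite length, and in particular is admissible and is supported on finitely many Bernstein blocks. By \Cref{thm:Ber}\eqref{it:split} I can pick a splitting subgroup $K \subset G$ small enough that $\widetilde V \in \cM_{K}(G)$ and $\widetilde{V}^{K}$ is finite-dimensional. Any $G$-equivariant map $\ind_{H}^{G}\chi' \to \widetilde V$ factors through the $\cM_{K}$-summand of $\ind_{H}^{G}\chi'$, which is the subrepresentation generated by $(\ind_{H}^{G}\chi')^{K}$, and the equivalence $\cM_{K}(G) \simeq \cM(\cH_{K}(G))$ via $W \mapsto W^{K}$ then gives
\[
\Hom_{G}(\ind_{H}^{G}\chi',\, \widetilde V) \;\cong\; \Hom_{\cH_{K}(G)}\!\bigl((\ind_{H}^{G}\chi')^{K},\, \widetilde{V}^{K}\bigr).
\]
By the finite type hypothesis $(\ind_{H}^{G}\chi')^{K}$ is finitely generated as a $\cH_{K}(G)$-module, say by $n$ elements; any homomorphism into the finite-dimensional space $\widetilde{V}^{K}$ is determined by its values on these generators, so the $\Hom$-space has dimension at most $n \cdot \dim \widetilde V^{K} < \infty$, as required.

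The main obstacle I anticipate lies in the second paragraph: verifying the identification $\Ind_{H}^{G}\chi \cong \widetilde{\ind_{H}^{G}\chi'}$ with the correct modular twist in the case where $H$ is closed but possibly neither open nor unimodular in $G$. Although standard, this is the step that requires careful bookkeeping of the modular characters appearing in the quasi-invariant measure on $H \backslash G$; everything else in the argument is formal once this piece is in place.
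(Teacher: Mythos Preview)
Your proof is correct and follows essentially the same route as the paper. The paper packages your steps into two pieces: the duality $\dim\Hom_{H}(V,\chi)=\dim\Hom_{G}(\ind_{H}^{G}\chi',\widetilde V)$ is \Cref{lem:DualFinMult} (proved via \Cref{lem:tilde} plus Frobenius reciprocity, which is exactly your bilinear-pairing argument combined with $\Ind_H^G\chi\cong\widetilde{\ind_H^G\chi'}$), and the remaining finite-generation step via a splitting subgroup is \Cref{lem:FinTypeFinMult}; the modular bookkeeping you flag as the main obstacle is handled in the paper by the notation $\hat\chi=\Delta_{G/H}\chi^{-1}$ and the identity $\widetilde{\ind_H^G(\chi)}=\Ind_H^G(\hat\chi)$.
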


\begin{proof}[Proof of \Cref{prop:DistChar}]
Let $\xi\in \cI$.  Consider the pullback of $\xi$ to $G\times G$ under the multiplication map. This gives us a $G$-invariant bilinear form $B$ on $\cH(G)$. Let $\cL$ be its left kernel and $\cR$ be its right kernel, $M:=\cL\backslash  \cH(G)$ and $N:=\cH(G)/\cR$. \Rami{We consider the right $G$-module $M$ as a left one using the anti-involution $g \mapsto g^{-1}$.  We get} a non-degenerate pairing between $M$ and $N$. \Cref{lem:adm} implies that $M$ and $N$ \DimaB{have finite length} and thus $M=\tilde N$.  We can consider the form $B$ as an element in $(M_{H_1,\Rami{\chi_1}}\otimes N_{H_2,\Rami{\chi_2}})^*$.
\Rami{Since the pairs $(G,H_i)$ are of   finite type}, \Cref{lem:admFin} implies that  $M_{H_1,\Rami{\chi_1}}$ and $N_{H_2,\Rami{\chi_2}}$ are finite-dimensional and thus
$$(M_{H_1,\chi_1}\otimes N_{H_2,\chi_2})^*\cong (M_{H_1,\chi_1})^*\otimes (N_{H_2,\chi_2})^*\cong (M^*)^{H_1,\chi_1^{\Rami{-1}}}\otimes (N^*)^{H_2,\chi_2^{\Rami{-1}}}.$$
Therefore $B$ defines an element in $(M^*)^{H_1,\chi_1^{\Rami{-1}}}\otimes (N^*)^{H_2,\chi_2^{\Rami{-1}}}$ which can be written in the form $B=\sum_{i=1}^k l_1^i \otimes l_2^i$.
Let $$l_1:=(l_1^1,\dots,l_1^k)\in ((M^k)^*)^{H_1,\chi_1^{\Rami{-1}}},\,l_2:=(l_2^1,\dots,l_2^k)\in ((N^k)^*)^{H_2,\chi_2^{\Rami{-1}}}.$$
 It is easy to see that $$\xi=\sum_{i=1}^k \Xi^M_{l_1^i,l_2^i}=\Xi^{M^k}_{l_1,l_2}.$$
\end{proof}

\section{Density of $\fz$-finite distributions}\label{sec:Dense}
\setcounter{lemma}{0}
For the proof of \Cref{thm:Dense} we will need the following 
\RamiB{lemma}.

\Dima{
\begin{lem}\label{lem:(K)}
Let $H<G$ be a closed subgroup and $\chi$ be a character of $H$.
Then there exists a character $\chi'$ of $H$ such that for any
 $V\in \cM(G)$ and any splitting subgroup $K\subset G$ \DimaB{(see \Cref{def:split})} we have
$$(V^{(K)})_{(H,\chi)}\cong (\ind_H^G\chi')^K \otimes _{\cH_K(G)} V^K .$$
Here we consider the left ${\cH_K(G)}$-module $(\ind_{H}^G\chi')^K $ as a right one using the anti-involution $g \mapsto g^{-1}.$\end{lem}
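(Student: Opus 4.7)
The plan is to reduce the lemma to a single explicit computation with the bimodule $\Sc(G)*e_K$ of right-$K$-invariant functions on $G$, where $e_K=\mathrm{vol}(K)^{-1}\mathbf{1}_K$ is the idempotent attached to $K$. Since $K$ is a splitting subgroup, the equivalence $\mathcal{P}_K:\cM_K(G)\xrightarrow{\sim}\cM(\cH_K(G))$ from \Cref{def:split} has a quasi-inverse given by $M\mapsto \Sc(G)*e_K\otimes_{\cH_K(G)}M$, where $\Sc(G)*e_K$ is viewed as a left $G$-module via the left regular action and a right $\cH_K(G)$-module via right convolution. Combined with the splitting $V=V^{(K)}\oplus W$ with $W^K=0$, this yields the canonical isomorphism
\[
V^{(K)} \;\cong\; \Sc(G)*e_K \,\otimes_{\cH_K(G)}\, V^K.
\]

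First I would take $(H,\chi)$-coinvariants on both sides. The crucial observation is that on $\Sc(G)*e_K$ the left $H$-action (restriction of the left $G$-action used above) commutes with the right $\cH_K(G)$-action, because left and right convolutions on $\Sc(G)$ commute. Therefore coinvariants pass through the tensor product:
\[
\bigl(V^{(K)}\bigr)_{(H,\chi)} \;\cong\; \bigl(\Sc(G)*e_K\bigr)_{(H,\chi)}\,\otimes_{\cH_K(G)}\, V^K.
\]
Reducing the lemma to the case $V^K=\cH_K(G)$, it suffices to identify $(\Sc(G)*e_K)_{(H,\chi)}$, as a right $\cH_K(G)$-module, with $(\ind_H^G\chi')^K$ for some character $\chi'$ of $H$ depending only on $\chi$ and the modular data of $G$ and $H$.

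For this identification I would introduce the Frobenius-type map
\[
\mu:\Sc(G)\longrightarrow \ind_H^G\chi',\qquad \mu(f)(g):=\int_H \chi(h)\,f(hg)\,dh,
\]
where $dh$ is a fixed left Haar measure on $H$. A direct change of variables $h\mapsto h_0 h$ shows that $\mu$ sends the relation $L_{h_0}f-\chi(h_0)f$ to zero and hence factors through $\Sc(G)_{(H,\chi)}$, while the change of variables $h\mapsto hh_0^{-1}$ shows that $\mu(f)$ lies in $\ind_H^G\chi'$ with $\chi'$ equal to $\chi^{-1}$ twisted by the modular character $\Delta_H$ (so in particular $\chi'$ is independent of $V$ and $K$). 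Surjectivity of the induced map is standard (partition of unity on $H\backslash G$), and injectivity follows from the usual argument: a function vanishing on every $H$-orbit lies in the span of the defining relations of the coinvariants. Thus $\Sc(G)_{(H,\chi)}\cong \ind_H^G\chi'$ as left $G$-modules. Right convolution with $e_K$ commutes with the left $H$-coinvariants, so applying it to both sides produces the required identification $(\Sc(G)*e_K)_{(H,\chi)}\cong(\ind_H^G\chi')^K$.

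The only genuinely subtle point, as opposed to formal nonsense about adjoint functors and commuting actions, is the bookkeeping of the modular character entering $\chi'$, but this is a routine calculation; the real content of the lemma is the combination of the splitting equivalence with the Frobenius-style description of coinvariants.
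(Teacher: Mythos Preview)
Your proposal is correct and follows essentially the same approach as the paper's proof: both use the splitting equivalence to write $V^{(K)}\cong \cH(G)\otimes_{\cH_K(G)}V^K$ (your $\Sc(G)*e_K$ is the same bimodule after absorbing the idempotent), then pass $(H,\chi)$-coinvariants through the tensor product, and finally identify $\cH(G)_{(H,\chi)}\cong \ind_H^G\chi'$. The paper is terser---it simply asserts the last isomorphism as ``easy to see''---whereas you spell out the Frobenius-type averaging map and the modular bookkeeping; this is a difference in level of detail, not in strategy.
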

\begin{proof}
First note that $V^{(K)} \cong \cH(G) \otimes _{\cH_K(G)} V^K,$ where the action of $G$ is given by the left action on $\cH(G)$.
\RamiB{
Let $H$ act on $G$ from the left and $G$ act on itself by $g\cdot x = x g^{-1}$. This gives an action of $G$ on $\cH(G)_{(H,\chi)}$. It is easy to see that we have an isomorphism  $\cH(G)_{(H,\chi)}\cong (\ind_H^G\chi')$ for some character $\chi'$ of $H$.
 Now}
$$(V^{(K)})_{(H,\chi)}\cong \cH(G)_{(H,\chi)} \otimes _{\cH_K(G)} V^K \cong  \ind_H^G\chi' \otimes _{\cH_K(G)} V^K \cong  (\ind_H^G\chi')^K \otimes _{\cH_K(G)} V^K.
$$
\end{proof}
}
\begin{lem}\label{lem:comalg}
Let $A$ be a unital commutative algebra finitely generated over $\bC$. Let $M$ be a finitely generated $A$-module, and $M^*$ denote the space of all $\bC$-linear functionals on $M$. Then the space of $A$-finite vectors in $M^*$ is dense in $M^*$.
\end{lem}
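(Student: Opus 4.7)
The plan is first to reinterpret the density statement. The space $M^*$ carries the weak-$*$ topology (the product topology inherited from $\bC^M$), whose topological dual is canonically $M$. Consequently the subspace of $A$-finite functionals is dense in $M^*$ if and only if it separates points of $M$, i.e.\ for every nonzero $m\in M$ there is an $A$-finite $\psi\in M^*$ with $\psi(m)\neq 0$.

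Next I would reduce this separation statement to a commutative algebra problem: it suffices to produce an ideal $I\subset A$ of finite $\bC$-codimension with $m\notin IM$. Indeed, given such $I$, any linear functional on $M$ that vanishes on $IM$ and is nonzero at $m$ (which exists precisely because $m\notin IM$) is annihilated by $I$, so its $A$-orbit is a quotient of the finite-dimensional space $A/I$ and is therefore finite-dimensional.

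For the construction of $I$, I would argue as follows. Since $m\neq 0$, the annihilator $\operatorname{Ann}_A(m)$ is a proper ideal of $A$ and hence lies in some maximal ideal $\mathfrak{m}$. Then the image of $m$ in the localization $M_\mathfrak{m}$ is nonzero, and $M_\mathfrak{m}$ is a finitely generated module over the Noetherian local ring $A_\mathfrak{m}$, so Krull's intersection theorem gives $\bigcap_n (\mathfrak{m}A_\mathfrak{m})^n M_\mathfrak{m}=0$. Hence $m\notin \mathfrak{m}^n M_\mathfrak{m}$ for some $n$, and therefore $m\notin \mathfrak{m}^n M$. I would take $I:=\mathfrak{m}^n$; its finite codimension is a standard consequence of the Nullstellensatz (which gives $A/\mathfrak{m}\cong\bC$) together with Noetherianness: each $\mathfrak{m}^k/\mathfrak{m}^{k+1}$ is a finitely generated $A/\mathfrak{m}$-module and thus finite-dimensional, and the filtration of $A/\mathfrak{m}^n$ by powers of $\mathfrak{m}$ then shows $A/\mathfrak{m}^n$ is finite-dimensional as well.

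There is no serious obstacle here; the argument is a straightforward packaging of the Nullstellensatz, localization, and Krull's intersection theorem, together with the correct reading of the topology on $M^*$. The only place requiring mild care is ensuring that $\mathfrak{m}^n M$ pulls back correctly from $\mathfrak{m}^n M_\mathfrak{m}$, which is automatic since localization is exact and $\mathfrak{m}^n M$ localizes to $(\mathfrak{m}A_\mathfrak{m})^n M_\mathfrak{m}$.
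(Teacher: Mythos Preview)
Your proof is correct and follows essentially the same approach as the paper: reduce density to separation of points, then use the Nullstellensatz to see that $M/\mathfrak{m}^n M$ is finite-dimensional and Krull's intersection theorem (which the paper cites as the Artin--Rees lemma) to find an $n$ with $m\notin \mathfrak{m}^n M$. The only cosmetic difference is that you fix one maximal ideal $\mathfrak{m}\supset\operatorname{Ann}_A(m)$ and localize, whereas the paper intersects over all maximal ideals; the content is the same.
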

\begin{proof}
It is enough to show that the intersection of the kernels of all $A$-finite functionals on $M$ is zero. Let $v$ be an element of this intersection,  $\fm\subset A$ be any maximal ideal and $i$ be any integer. Then $M/ \fm^iM$ is finite-dimensional over $\bC$ and thus any functional on it defines an $A$-finite functional on $M$. \EitanA{Such a}
 functional vanishes on $v$, and thus the image of $v$ in $M/ \fm^iM$ is zero. We conclude that $v$ belongs to the space $\bigcap_{\fm} \bigcap_{i} (\fm^iM)$, which is zero by the Artin-Rees lemma.
\end{proof}
\Dima{
\begin{proof}[Proof of \Cref{{thm:Dense}}]
Denote $X_i:=G/H_i$. For some line bundle $\cF_1$ on $X_1$ we have
$$\cI \cong \Sc^*(X_1,\cF_1)^{(H_2,\chi_2)} \cong (\Sc(X_1,\cF_1)_{(H_2,\chi_2^{-1})})^*.$$
Thus it is enough to show that for any $f\in \Sc(X_1,\cF_1)_{(H_2,\chi_2^{-1})}$ such that $\langle \xi, f\rangle=0$ for any $\DimaA{\fz}$-finite distribution $\xi \in \cI,$ we have $f=0$. Let $f$ be like that. Let $K<G$ be a splitting open compact subgroup that fixes a representative of $f$ in $\Sc(X_1,\cF_1)$. Note that $V:=\Sc(X_1,\cF_1)=\ind_{H_{1}}^G\chi_1'$ for some character $\chi_1'$ of $H_1$ . \RamiB{Since $K$ is a splitting subgroup, $V^{(K)}$  is a direct summand of $V$ as a $G$-representation. Hence} $M:=(V^{(K)})_{(H_2,\chi_2^{-1})}$ is a direct summand of $V_{(H_2,\chi_2^{-1})}$ as a $\DimaA{\fz}$-module which contains $f$. Therefore it is enough to show that the space of $\DimaA{\fz}$-finite vectors in $M^*$ (which by \Cref{thm:Ber}\eqref{1} equals  the space of $\fz(\cH_K(G))$-finite vectors in $M^*$) is dense in $M^*$. By \Cref{lem:(K)}, \RamiC{there exists a character $\chi_2'$ of $H_2$ such that}
$$
M=(\ind_{H_2}^G\chi_{2}')^K \otimes _{\cH_K(G)} V^K = (\ind_{H_2}^G\chi_{2}')^K \otimes _{\cH_K}(\ind_{H_{1}}^G\chi_{1}')^K,
$$
where we consider the left ${\cH_K(G)}$-module $(\ind_{H_2}^G\chi_{2}')^K $ as a right one using the anti-involution $g \mapsto g^{-1}$.
The assumption implies that $(\ind_{H_i}^G\chi_{i}')^K $ are finitely generated over $\cH_K(G)$. By \Cref{thm:Ber}\eqref{2} this implies that they are also finitely generated over $\fz(\cH_K(G))$.
 Thus $M$ is also finitely generated over $\fz(\cH_K(G))$.
 The assertion follows now from \Cref{lem:comalg} in view of  \Cref{thm:Ber}\eqref{3}.
\end{proof}}

\section{Wavefront set of $\fz$-finite distributions and the proof of \Cref{thm:main}}\label{sec:WF}
\setcounter{lemma}{0}
In this section we assume that $F$ has characteristic zero.
%
\subsection{Preliminaries on wave front set}\label{subsec:WF}

\RamiD{
In this section we give an overview of the theory of the wave front set as developed  by D.~Heifetz \cite{Hef}, following L.~H\"ormander (see \cite[\S 8]{Hor}). }\RamiD{For simplicity we ignore here the difference between distributions and generalized functions.}
\begin{defn}\label{def:wf}$ $
\begin{enumerate}
\item
Let $V$ be a finite-dimensional vector space over $F$.
Let $f \in C^{\infty}(V^*)$ and $w_0 \in V^*$. We  say that $f$ \emph{vanishes asymptotically in the direction of} $w_0$
if there exists $\rho \in \Sc(V^*)$ with $\rho(w_0) \neq 0$ such that the function $\phi \in C^\infty(V^* \times F)$ defined by $\phi(w,\lambda):=f(\lambda w) \cdot \rho(w)$ is compactly supported.

\item
Let $U \subset V$ be an open set and $\xi \in \Sc^*(U)$. Let $x_0 \in U$ and $w_0 \in V^*$.
We say that $\xi$ is \emph{smooth at} $(x_0,w_0)$ if there exists a compactly supported non-negative function $\rho \in \Sc(V)$ with $\rho(x_0)\neq 0$ such that the Fourier transform $\cF^*(\rho \cdot \xi)$ vanishes asymptotically in the direction of
$w_0$.
\item
The complement in $T^*U$
of the set of smooth pairs $(x_0, w_0)$ of $\xi$ is called the
\emph{wave front set of} $\xi$ and denoted by $WF(\xi)$.

\item For a point $x\in U$ we denote $WF_x(\xi):=WF(\xi)\cap T^*_xU$.
\end{enumerate}
\end{defn}

\begin{remark}
Heifetz  defined  $WF_{\Lambda}(\xi )$ for any open subgroup $\Lambda$ of $F^{\times}$ of finite index. Our definition above is slightly different from the definition in \cite{Hef}. They relate by
\begin{equation*}
WF(\xi)-(U \times \{0\})= WF_{F^{\times}}(\xi).
\end{equation*}
\end{remark}

\RamiD{
\begin{prop}
[{see  \cite[Theorem 8.2.4]{Hor}  and \cite[Theorem 2.8]{Hef}}]\label{prop:SubPull}
\label{submrtion}
Let $U \subset F^m$ and $V \subset F^n$ be open subsets. Suppose that  $f: U \to V$ is an analytic submersion\footnote{\DimaB{\emph{i.e.} the differential of $f$ is surjective.}}. Then for any
$\xi \in \Sc^*(V)$, we have $$WF(f^*(\xi)) \subset f^*(WF(\xi)):=\left\{ (x,v) \in T^*U \vert \,  \exists w\in WF_{f(x)}(\xi) \text{ s.t. } d_{f(x)}^*f(w)=v \right \} .$$
\end{prop}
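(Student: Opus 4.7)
The plan is to reduce to a model case using the local structure of submersions and then compute explicitly in that model. By the $F$-analytic implicit function theorem (e.g.~\cite{Ser}), any analytic submersion $f : U \to V$ is, locally on $U$, analytically isomorphic to a coordinate projection. Since the conclusion is pointwise in $x \in U$ and both $WF(f^*\xi)$ and $f^*(WF(\xi))$ are local in $U$, I would restrict to such a neighborhood and, after an analytic change of variables, assume $U = V \times W$ with $W \subset F^{m-n}$ open, and $f = \pi$ the first projection. In this model the cotangent map of $\pi$ is the inclusion $T^*_{x}V \into T^*_{(x,y)}(V \times W)$, $w \mapsto (w, 0)$, so $\pi^*(WF(\xi))$ at $(x,y)$ is precisely $\{(v, 0) : v \in WF_x(\xi)\}$.

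Fix $(x_0, y_0) \in V \times W$ and a covector $(v_0, w_0) \notin \pi^*(WF(\xi))_{(x_0, y_0)}$. Then either (a) $w_0 \neq 0$, or (b) $w_0 = 0$ and $v_0 \notin WF_{x_0}(\xi)$. In each case the goal is to exhibit a cutoff $\rho$ with $\rho(x_0, y_0) \neq 0$ such that $\cF^*(\rho \cdot \pi^*\xi)$ vanishes asymptotically at $(v_0, w_0)$. I would take a product cutoff $\rho = \rho_1 \otimes \rho_2$ with $\rho_i \in \Sc$ nonvanishing at the respective basepoints, and use the factorization
$$
\cF^*(\rho \cdot \pi^*\xi)(v, w) \;=\; \cF^*(\rho_1 \xi)(v) \cdot \cF^*(\rho_2)(w),
$$
together with the non-Archimedean feature that $\cF^*(\rho_2)$ is itself locally constant and compactly supported.

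In case (a), any choice of $\rho_1$ suffices: a frequency-side cutoff $\tilde\rho_1 \otimes \tilde\rho_2$ with $\tilde\rho_2$ supported in a neighborhood of $w_0$ that avoids $0$ forces $\cF^*(\rho_2)(\lambda w) = 0$ for $|\lambda|$ large and $w \in \mathrm{supp}\,\tilde\rho_2$, which bounds $\lambda$ and yields joint compact support in $(v,w,\lambda)$. In case (b), I would first choose $\rho_1$ so that $\cF^*(\rho_1 \xi)$ vanishes asymptotically at $v_0$, obtaining $\tilde\rho_1$ with $\tilde\rho_1(v_0) \neq 0$ and $\cF^*(\rho_1 \xi)(\lambda v)\,\tilde\rho_1(v)$ jointly compactly supported in $(v, \lambda)$; combined with any compactly supported $\tilde\rho_2$ nonvanishing at $0$, the resulting product is jointly compactly supported in $(v, w, \lambda)$, because the $(v,\lambda)$-support constraint from the first factor confines $\lambda$ to a compact set while $w$ lies in the compact set $\mathrm{supp}\,\tilde\rho_2$.

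The main obstacle is bookkeeping: Heifetz's notion of asymptotic vanishing demands joint compact support of $\phi(w, \lambda) = g(\lambda w)\,\rho(w)$ in $(w, \lambda)$, not merely pointwise decay, so in case (b) the frequency-side cutoffs $\tilde\rho_1, \tilde\rho_2$ must be arranged so that the product on the right of the factorization inherits this joint compactness; this is exactly where the non-Archimedean fact that $\cF^*(\rho_2)$ has compact support does the real work. The reduction to the projection case is entirely standard once the $F$-analytic implicit function theorem is available.
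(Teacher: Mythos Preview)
The paper does not give its own proof of this proposition; it is stated with references to H\"ormander (Archimedean) and Heifetz ($p$-adic), and used as a black box. Your argument supplies a correct direct proof: the reduction to a coordinate projection via the $F$-analytic implicit function theorem is the standard move, and your product-cutoff computation in the model $\pi:V\times W\to V$ correctly handles both cases. In case~(a) you use that $\cF^*(\rho_2)\in\Sc(W^*)$ has compact support, so scaling by $\lambda$ along a direction bounded away from zero forces $\lambda$ into a compact set; in case~(b) the compact $(v,\lambda)$-support furnished by the hypothesis $v_0\notin WF_{x_0}(\xi)$ already bounds $\lambda$, and then any compactly supported $\tilde\rho_2$ controls $w$. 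This is exactly the argument behind the cited results, streamlined by the non-Archimedean feature that Schwartz functions have compactly supported Fourier transforms (so one works with genuine vanishing rather than rapid decay).
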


\begin{corollary} \label{iso}
Let $V, U \subset F^n$ be open subsets. Let $f: V \to U$ be an
analytic isomorphism. Then for any $\xi \in \Sc^*(V)$ we have
$WF(f^*(\xi)) = f^*(WF(\xi))$.
\end{corollary}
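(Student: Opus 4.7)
The strategy is to deduce both inclusions from \Cref{prop:SubPull}, since an analytic isomorphism is a submersion, and its inverse is also an analytic isomorphism (hence likewise a submersion). We work under the natural convention that $\xi$ is a distribution on the target so that the pullback $f^{*}\xi$ makes sense; for an isomorphism $f$, the induced map on cotangent spaces $d^{*}f\colon T^{*}_{f(x)}U \to T^{*}_{x}V$ is a linear isomorphism, so $f^{*}$ acts as a bijection on cotangent bundles.

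First, applying \Cref{prop:SubPull} directly to $f$ and $\xi$ gives the easy inclusion
\begin{equation*}
WF(f^{*}\xi) \subset f^{*}(WF(\xi)).
\end{equation*}

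For the reverse inclusion, set $g := f^{-1}\colon U \to V$, which is again an analytic isomorphism. Apply \Cref{prop:SubPull} to $g$ and to the distribution $f^{*}\xi$; since $g \circ f = \operatorname{id}_{V}$ we have $g^{*}(f^{*}\xi) = \xi$, so
\begin{equation*}
WF(\xi) = WF(g^{*}(f^{*}\xi)) \subset g^{*}(WF(f^{*}\xi)).
\end{equation*}
Since $d^{*}g = (d^{*}f)^{-1}$ under the identifications provided by the isomorphism, applying $f^{*}$ to both sides is an order-preserving operation on subsets of the cotangent bundle, and $f^{*} \circ g^{*} = (g \circ f)^{*} = \operatorname{id}$. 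Hence
\begin{equation*}
f^{*}(WF(\xi)) \subset f^{*}(g^{*}(WF(f^{*}\xi))) = WF(f^{*}\xi),
\end{equation*}
which combined with the first inclusion yields the desired equality.

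There is no real obstacle here; the only point requiring minor care is bookkeeping the direction of the cotangent map $d^{*}f$ and verifying the formal identity $f^{*} \circ g^{*} = (g \circ f)^{*}$ on subsets of cotangent bundles, which is immediate from the definition of the set-theoretic pullback given in \Cref{prop:SubPull}. Thus the corollary follows formally from \Cref{prop:SubPull} applied to the pair $(f, f^{-1})$.
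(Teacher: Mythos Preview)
Your approach is correct and is exactly what the paper intends: the corollary is stated without proof, immediately after \Cref{prop:SubPull}, and the implicit argument is precisely to apply that proposition to both $f$ and $f^{-1}$.

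One tiny bookkeeping slip: to justify $g^{*}(f^{*}\xi)=\xi$ for distributions you should invoke $f\circ g=\operatorname{id}_{U}$ (so that $g^{*}\circ f^{*}=(f\circ g)^{*}=\operatorname{id}$ on $\Sc^{*}(U)$), not $g\circ f=\operatorname{id}_{V}$. Your use of $f^{*}\circ g^{*}=(g\circ f)^{*}=\operatorname{id}$ on subsets of $T^{*}V$ is, on the other hand, the correct composition. This is exactly the ``direction of $d^{*}f$'' care you flagged yourself, and it does not affect the validity of the argument.
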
}

\begin{corollary}\label{cor:bundle}
Let $X$ be an $F$-analytic manifold\footnote{In  the classical
sense of \cite{Ser} and not in the sense of rigid geometry or Berkovich geometry.}. We can define the wave front set of any distribution in
$\Sc^*(X)$, as a subset of the cotangent bundle $T^*X$.
\end{corollary}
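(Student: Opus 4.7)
The plan is to define the wave front set locally via charts and then verify the definition is independent of chart, so that the local pieces glue into a well-defined subset of $T^*X$.

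First I would fix an atlas $\{(U_\alpha, \phi_\alpha)\}$ on $X$, where each $\phi_\alpha : U_\alpha \to V_\alpha$ is an analytic isomorphism onto an open subset $V_\alpha \subset F^n$. Given $\xi \in \Sc^*(X)$, the restriction $\xi|_{U_\alpha}$ pushes forward to a distribution $(\phi_\alpha^{-1})^*(\xi|_{U_\alpha}) \in \Sc^*(V_\alpha)$, whose wave front set is defined by \Cref{def:wf} as a subset of $T^*V_\alpha$. Pulling back along the cotangent lift of $\phi_\alpha$ gives a candidate subset $WF_\alpha(\xi) \subset T^*U_\alpha$.

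Next I would check compatibility on overlaps. On $U_\alpha \cap U_\beta$, the transition map $\phi_\beta \circ \phi_\alpha^{-1}$ is an analytic isomorphism between open subsets of $F^n$, so \Cref{iso} applies and shows that the two candidate subsets $WF_\alpha(\xi)$ and $WF_\beta(\xi)$ agree on $T^*(U_\alpha \cap U_\beta)$ after the canonical identification induced by the cotangent lift of the transition map. Since $T^*X$ is itself defined by gluing the $T^*U_\alpha$ along these same cotangent lifts, the local pieces glue unambiguously to a subset $WF(\xi) \subset T^*X$.

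Finally I would note that the resulting set is independent of the chosen atlas: any two atlases admit a common refinement, and on that refinement the construction yields the same set by the same invariance argument. The only step that really uses anything nontrivial is the compatibility check, and the entire content there is \Cref{iso}; everything else is bookkeeping about manifolds. There is no serious obstacle.
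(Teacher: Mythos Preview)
Your proposal is correct and is exactly the argument the paper has in mind: the corollary is stated without proof immediately after \Cref{iso}, and the intended content is precisely the chart-by-chart definition together with the chart-independence furnished by \Cref{iso}. There is nothing to add.
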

We will need the following standard properties of the wavefront set.


\begin{lem}\label{lem:WFSmooth}
Let $X$  be an $F$-analytic manifold and
$\xi \in \Sc^*(X)$ be a distribution on it.
\begin{enumerate}
\item \label{it:smooth} Let $x\in X$. Assume that $WF_x(\xi)=\{0\}$. Then $\xi$ is smooth at $x$, i.e. there exists an analytic embedding $\phi:U \hookrightarrow  X$ from an open neighborhood $U$ of the origin in $F^n$ to $X$ such that
$\phi(0)=x$ and $\phi^*(\xi)$ coincides with a Haar measure.

\item \label{it:equiv} \cite[Theorem 4.1.5]{Aiz} Let an $F$-analytic group $H$ act analytically on $X$. Suppose that $\xi$ changes under the action of $H$ by some character of $H$. Then $$WF(\xi) \subset \{(x,v) \in T^*X|v(\fh x)=0 \},$$
where $\fh x$ denotes the image of the differential of the action map $h \mapsto hx$.
\end{enumerate}
\end{lem}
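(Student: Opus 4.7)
The plan is to handle the two parts separately. Part~(2) follows directly from the cited \cite[Theorem 4.1.5]{Aiz}; there is nothing new to do. The substantive claim is part~(1), which I will prove as follows.

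I would first reduce to a local statement by working in a small $F$-analytic chart, so that we may assume $X$ is an open subset of a finite-dimensional $F$-vector space $V$ and $x=0$. The conclusion is equivalent to showing that there exists a compact open subgroup $U\subset V$ such that $\mathbf{1}_U\cdot\xi$ coincides with a locally constant function times a Haar measure on $U$; by Fourier inversion this is in turn equivalent to $\cF^{*}(\mathbf{1}_U\xi)$ having compact support in $V^{*}$. Thus the task is to produce such a $U$.

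The hypothesis $WF_0(\xi)=\{0\}$ says that for every nonzero $w\in V^{*}$ there exist $\rho_w\in\Sc(V)$ with $\rho_w(0)\ne 0$ and $\tau_w\in\Sc(V^{*})$ with $\tau_w(w)\ne 0$ such that the function $(u,\lambda)\mapsto \cF^{*}(\rho_w\xi)(\lambda u)\tau_w(u)$ is compactly supported on $V^{*}\times F$. In the non-Archimedean setting this translates to the existence of an open $F^{\times}$-stable cone $C_w\subset V^{*}\setminus\{0\}$ containing $w$ on which $\cF^{*}(\rho_w\xi)$ vanishes outside a compact set. Next I would use the fact that the projective space $\mathbb{P}(V^{*})$ is compact in the non-Archimedean topology to extract finitely many directions $w_1,\dots,w_m$ whose cones $C_{w_1},\dots,C_{w_m}$ cover $V^{*}\setminus\{0\}$.

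The final step, which I expect to be the main technical obstacle, is to replace the finitely many individual bumps $\rho_{w_i}$ by a single bump $\rho\in\Sc(V)$ with $\rho(0)\ne 0$ whose support lies in $\bigcap_i\operatorname{supp}(\rho_{w_i})$, while preserving the asymptotic vanishing of $\cF^{*}(\rho_{w_i}\xi)$ on each $C_{w_i}$ after passing to $\cF^{*}(\rho\xi)$. Writing $\rho=\alpha_i\cdot\rho_{w_i}$ for appropriate $\alpha_i\in\Sc(V)$ and using that $\cF^{*}$ turns multiplication into convolution, one checks that the asymptotic vanishing on $C_{w_i}$ is preserved up to possibly shrinking the cone slightly. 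Since the original cones form a finite open cover of the compact set $\mathbb{P}(V^{*})$, this slight shrinking is harmless, and one concludes that $\cF^{*}(\rho\xi)$ has compact support in $V^{*}$, as desired. The key inputs making this work cleanly in the $p$-adic case, as opposed to the real setting, are that $\mathbb{P}(V^{*})$ is compact and totally disconnected, and that the bumps $\rho$ and $\rho_{w}$ can be taken to be characteristic functions of compact open subgroups, which makes the convolution bookkeeping essentially combinatorial.
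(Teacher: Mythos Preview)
Your proof is correct, but there is nothing to compare it to: the paper does not prove this lemma. It is introduced with the sentence ``We will need the following standard properties of the wavefront set'' and stated without argument; part~(2) carries the citation to \cite[Theorem~4.1.5]{Aiz}, and part~(1) is simply asserted as well known.

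Your argument for part~(1) is the standard one and goes through as written. A couple of places could be tightened if you want a fully rigorous version rather than a sketch: when you speak of an ``open $F^\times$-stable cone $C_w$'', it is cleanest in the non-Archimedean setting to take the auxiliary functions $\rho_w$ and $\tau_w$ to be characteristic functions of compact open balls from the outset (which you note at the end), so that $C_w=F^\times\cdot B_w$ for a compact open ball $B_w\ni w$ with $0\notin B_w$; and in the convolution step, the ultrametric inequality makes the ``slight shrinking'' completely explicit, since for $z=\mu v$ with $v\in B_{w_i}$ and $|\mu|$ large one has $z+A^\perp=\mu(v+\mu^{-1}A^\perp)\subset \mu B_{w_i}$ once $|\mu^{-1}A^\perp|$ is below the radius of $B_{w_i}$. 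With these cosmetic adjustments the bookkeeping really is finite and combinatorial, exactly as you say.
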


%

\subsection{Preliminaries on \DimaB{special} balls}\label{subsec:PrelFuzzy}

The notions of \DimaB{special} balls and admissible balls were defined in \cite{S} \DimaB{(under the name \emph{fuzzy balls})} for $\mathbf{G}=GL_n$. Here we generalize them to arbitrary reductive groups, using the standard theory of exponentiation.
\DimaA{
\begin{notn}
Let $\cO$ be the ring of integers in $F$. Fix  a uniformizer $\varpi \in \cO$ and
denote $q:=|\varpi|^{-1}$.
\end{notn}
}

We start with the following standard lemma on exponentiation.
\RamiF{\begin{lemma}\label{lem:exp}
There exists a lattice (i.e. a free finitely-generated $\cO$-submodule of full rank) $\cL \subset \fg$, a compact open subgroup $K\subset G$ and an analytic diffeomorphism $\exp:\cL \to K$ such that
\begin{enumerate}
\item \label{it:hom} For any $x \in \cL, \, \exp|_{\cO \cdot x} $ is a group homomorphism.
\item \label{it:norm} $\frac{d}{dt}\exp(tx)|_{t=0}=x.$
\item \label{it:BCH} For any $X\in  \varpi^{m}\cL, Y \in  \varpi^{n}\cL$ we have $$\exp^{-1}(\exp(X)\exp(Y))-X-Y \in \varpi^{m+n}\cL.$$
\end{enumerate}
\end{lemma}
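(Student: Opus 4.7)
The plan is to realize $\exp$ as the restriction of the classical matrix exponential. Choose a closed embedding of algebraic groups $\mathbf{G} \hookrightarrow \mathbf{GL}_N$, which exists because $\mathbf{G}$ is affine; this induces an embedding $\fg \hookrightarrow \Mat_N$. Because $F$ has characteristic zero, the $p$-adic valuation $v(n!)$ grows only linearly in $n$, so the matrix exponential $\exp(X) = \sum_{n \geq 0} X^n/n!$ converges on $\varpi^k \Mat_N(\cO)$ for every sufficiently large $k$, as does its inverse $\log(1+Y) = \sum_{n \geq 1} (-1)^{n+1} Y^n/n$. Fix such a $k$ and set $\cL := \fg \cap \varpi^k \Mat_N(\cO)$ and $K := \exp(\cL)$.

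For $\cL$ thus chosen, $\exp\colon \cL \to K$ is an analytic bijection with inverse $\log$. The inclusion $K \subset G$ follows from the standard fact that if $\mathbf{G}$ is cut out of $\mathbf{GL}_N$ by polynomial equations, then the linearization of those equations defines $\fg$, and the power series $\exp(X)$ for $X \in \fg$ then automatically satisfies the original equations. That $K$ is closed under multiplication and inversion follows from the Baker--Campbell--Hausdorff formula
\[
\log(\exp X \cdot \exp Y) = X + Y + \tfrac{1}{2}[X,Y] + \cdots ,
\]
whose higher-order terms are iterated brackets; enlarging $k$ if necessary so that $\cL$ is bracket-stable and BCH converges inside $\cL$, one obtains closure. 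Since $\cL$ is compact and open in $\fg$, its image $K$ is compact and open in $G$.

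Property \eqref{it:norm} is immediate from the series. Property \eqref{it:hom} holds because $sx$ and $tx$ commute, so $\exp(sx)\exp(tx) = \exp((s+t)x)$ follows from direct manipulation of power series. Property \eqref{it:BCH} is again read off from BCH: every term beyond $X + Y$ in $\log(\exp X \cdot \exp Y)$ involves at least one bracket between $X$ and $Y$, so for $X \in \varpi^m \cL$ and $Y \in \varpi^n \cL$ these terms have valuation at least $m + n$ and, after possibly further enlarging $k$, lie in $\varpi^{m+n}\cL$.

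The main obstacle is purely bookkeeping: one must choose $k$ large enough to guarantee simultaneously the convergence of $\exp$ and $\log$, the stability of $\cL$ under iterated brackets with appropriate valuation gain, and the convergence of the BCH series back into $\cL$. Each of these is a condition of the form $k \geq k_0$ for an explicit $k_0$ depending on $N$, on the ramification of $F$, and on the residual characteristic, and all are simultaneously met by taking $k$ sufficiently large.
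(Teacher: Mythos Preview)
Your approach is essentially the same as the paper's: both realize $\exp$ via the matrix exponential after passing to $\GL_N$, invoke the Baker--Campbell--Hausdorff formula, and choose the lattice small enough that all the relevant series converge with the required valuation gain. The paper is in fact terser than you are---it simply asserts ``it is enough to prove the lemma for $\mathbf{G}=\GL_n$'' and then bounds the exp, log, and BCH coefficients uniformly by $C^n$ to set $\cL=\{X:|X_{ij}|<C^{-1}\}$, whereas you carry out the embedding explicitly and phrase the smallness condition as ``$k$ large enough.'' The one place where your write-up is slightly glib is the sentence justifying $\exp(\cL)\subset G$: the linearization remark is suggestive but not a proof, and the clean argument goes through Chevalley's theorem (realize $\mathbf{G}$ as the stabilizer of a line) or, equivalently, through the functoriality of the intrinsic $p$-adic exponential as in Serre's \emph{Lie Algebras and Lie Groups}, which the paper cites. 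That said, the paper does not justify its reduction step either, so your level of detail already matches it.
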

For completeness we will indicate the proof of this lemma in \Cref{subsec:BCH}.
\begin{remark}
The conditions \eqref{it:hom} and \eqref{it:norm} define the map $\exp$ uniquely. \end{remark}
}

We fix such an $\cL$. Fix also an additive character $\psi$ of $F$ that is trivial on $\cO$ \DimaB{and non-trivial on $\varpi^{-1}\cO$}.

\begin{defn} $\,$
\begin{itemize}
\item For a vector space $V$ over $F$ and a lattice $\Lambda\subset V$ denote $\Lambda^{\bot}:=\{y\in V^* \, \vert \, \forall x\in \Lambda,\, \,  \langle x , y\rangle \in \cO\} \subset V^*$.

\RamiF{

\item For a set $B=a + \Lambda \subset \fg^*$ define a subset $K_B:=exp(\Lambda^{\bot}) \subset G$. Define also a function $\eta_B$ of $K_B$ by $\eta_B(exp(x))=\psi(\langle a , x \rangle)$. Note that  $K_B$ and $\eta_B$ depend only on the set $B$ and not on its presentation as $a + \Lambda$.

\item An \emph{admissible ball} is a set $B \subset \fg^*$ of the form $a + \Lambda,$ where $\Lambda\supset \cL^{\bot}$ is a lattice such that
$K_B$ is a group and $\eta_B$ is its character.

Define $e_B \in \cH(G)$ to be the measure $\eta_B e_{K_B}$, where $e_{K_B}$ is the normalized Haar measure on $K_B$.

}

\item An admissible ball $B$ is called \emph{nilpotent} if it intersects the nilpotent cone $\cN\subset \fg^*$.

\item For an element $x \in \fg^*$ we define $|x|:=\min\{|\alp| \, \vert x \in \alp \cL^{\bot}, \alp  \in     F\}$.

\item A \DimaB{special} ball of radius $r\geq 1$ is a set $B \subset \fg^*$ of the form $c + \alp \cL^{\bot}$, where $\alp \in F, \, c \in \fg^*, |\alp|=r$ and either $|c| =r^2$ or $|c\DimaA{\varpi}|=r^2$. \RamiC{It is easy to see that any \DimaB{special} ball is an admissible ball.}

\item For $Y\in \fg^*$ we denote by $B(Y)$ the unique \DimaB{special} ball containing $Y$ \DimaB{(see \Cref{lem:BallDisj})}.

\item Denote the set of all \DimaB{special} balls by $\mathfrak{F}$.

\item $\cL_n:=\DimaA{\varpi}^{n}\cL,\, K_n:=\exp(\cL_n)$ for $n \ge 0$.

\end{itemize}
\end{defn}

In \Cref{sec:balls} we give more details about admissible and \DimaB{special} balls and prove the following fundamental statements.

\begin{thm}\label{thm:fuzzy decomp}
Let $(\pi,V)$ be a smooth representation. Then $\{\pi(e_B)\}_{B\in \mathfrak{F}}$ form a full family of mutually orthogonal projectors, i.e.
\begin{enumerate}
\item \label{thm:fuzzy decomp:orthproj} for any $B,C \in \mathfrak{F}$ we have $$   e_{B}e_{C}= \begin{cases}e_B & B=C, \\
0 & B\neq C. \\
\end{cases}$$

\item \label{thm:fuzzy decomp:full} $$ \displaystyle V = \bigoplus_{B\in \mathfrak{F}} \pi(e_B)V.$$

\end{enumerate}

\end{thm}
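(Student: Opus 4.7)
The heart of the orthogonality part is the geometric observation that any two distinct special balls are disjoint as subsets of $\fg^*$. For balls of equal radius this is immediate, since they are cosets of the same lattice $\alpha\cL^\perp$. For balls of distinct radii $r_B > r_C \ge 1$, an element of a special ball of radius $r$ has absolute value confined to the shell $\{r^2, r^2 q\}$ (with the exceptional ball $\cL^\perp$ absorbing the range $|y|\le 1$), and the shells associated with different radii are pairwise disjoint. The idempotence $e_B * e_B = e_B$ is a direct computation using that $\eta_B$ is a character of $K_B$. For the orthogonality $e_B * e_C = 0$ when $B \neq C$, after possibly swapping $B$ and $C$ one may assume $|\alpha_B| \ge |\alpha_C|$, so that $K_B \subseteq K_C$; the convolution is then supported on $K_C$ and reduces, up to a global factor, to $\int_{K_B}(\eta_B \eta_C^{-1})(k)\, dk$. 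This integral vanishes iff $\eta_B$ and $\eta_C$ differ on $K_B$, which under the identification $\eta_B(\exp x) = \psi(\langle x, c_B\rangle)$ translates to $c_C - c_B \notin (\alpha_B^{-1}\cL)^\perp = \alpha_B \cL^\perp$. But if $c_C - c_B \in \alpha_B \cL^\perp$ then $C \subseteq B$, contradicting disjointness.

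For the completeness part, fix $v \in V^{K_n}$; the plan is to produce a finite decomposition $v = \sum_B \pi(e_B) v$ by iterated Fourier analysis on abelian subquotients of $G$. By the BCH estimate in \Cref{lem:exp}, the quotient $K_j/K_m$ is abelian whenever $m \ge 2j$, and $\exp$ induces an isomorphism with the additive group $\cL_j/\cL_m$; its Pontryagin dual via $\psi$ is then $\cL_m^\perp/\cL_j^\perp$. Decomposing $V^{K_m}$ (for $m$ sufficiently large, in particular $m \ge \max(n,2j)$) into isotypic components for this abelian action expresses $v$ as a finite sum of character projections $P_c v$, indexed by cosets $c + \cL_j^\perp \subset \cL_m^\perp$, and each $P_c$ equals $\pi(e_{B_c})$ for the admissible ball $B_c = c + \varpi^{-j}\cL^\perp$. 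When $B_c$ is not itself a special ball (i.e.\ when $|c|$ is too small relative to $q^j$), one iterates the procedure at a smaller radius $q^{j-1}, q^{j-2}, \ldots$, refining $B_c$ into the special balls it contains. The iteration terminates in finitely many steps, giving the desired decomposition.

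The main obstacle will be executing the iterative refinement cleanly: at each step one must show that the admissible-ball projector $\pi(e_{B_c})$ decomposes exactly as the sum of $\pi(e_B)$ over the special balls $B$ at the next finer scale contained in $B_c$, and that this process matches the partition of $\fg^*$ by special balls used in the orthogonality step. A secondary technical point is the careful treatment of the base case of the induction, namely the radius-$1$ ball $\cL^\perp$ which contains $0$ and carries the trivial character: it does not fit the generic pattern $|c| \in \{r^2, r^2 q\}$ in the same way as balls of larger radius, and hence requires separate handling.
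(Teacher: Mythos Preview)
Your argument for part \eqref{thm:fuzzy decomp:orthproj} is essentially the paper's: disjointness of special balls (\Cref{lem:BallDisj}), idempotence (\Cref{lem:proj}), and the character comparison on $K_B\cap K_C$ (\Cref{lem:orthproj}) are exactly the ingredients used there, and your shell analysis is a correct way to phrase the disjointness.

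For part \eqref{thm:fuzzy decomp:full} your route diverges from the paper's. You propose a constructive top-down procedure: start at a large radius, decompose $v$ by characters of the abelian quotient $K_j/K_m$, keep the pieces that already land in special balls, and recursively refine the rest at radius $q^{j-1}$, $q^{j-2}$, \ldots. This works, but the bookkeeping you flag as the ``main obstacle'' is genuine and somewhat tedious. The paper bypasses the iteration entirely. It first observes (\Cref{lem:BigBallVan}) that for $v\in V^{K_N}$ only finitely many $\pi(e_B)v$ are nonzero, so $w:=\sum_{B\in\mathfrak{F}}\pi(e_B)v$ is well defined; orthogonality then gives $\pi(e_B)(v-w)=0$ for every $B$. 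The whole burden is thus shifted to a single non-vanishing statement (\Cref{lem:FuzzyNonVan}): every nonzero vector is detected by some $\pi(e_B)$. That lemma is proved by \emph{one} application of Fourier analysis on $K_n/K_{2n}$, where $n$ is chosen minimal with $v\in V^{K_{2n}}\setminus V^{K_{2n-2}}$; this minimality forces the relevant character to be nontrivial on $K_{2n-2}/K_{2n}$, which is precisely the condition that the corresponding ball be special. So the paper trades your descending induction for a single well-chosen scale, and your ``secondary technical point'' about the base ball $\cL^{\bot}$ becomes the trivial case $v\in V^{K_0}$.
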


\begin{thm}\label{thm:FuzzyFin}
For any finitely generated smooth representation $\pi$ there exist only finitely many non-nilpotent \DimaB{special} balls $B$ such that $\pi(e_B)\neq 0$.
\end{thm}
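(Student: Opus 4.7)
The plan is to combine the orthogonality and completeness of the projectors $\{\pi(e_B)\}$ given by \Cref{thm:fuzzy decomp} with the finite-generation hypothesis on $\pi$, reducing to a combinatorial statement about the compatibility between special balls and a fixed level.

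First I reduce to an induced representation. Since $\pi$ is finitely generated, one can choose a sufficiently deep compact open subgroup $K = K_n$ such that $\pi$ is generated over $\cH(G)$ by vectors in $\pi^K$. It then suffices to prove the conclusion for the (cyclic) induced representation $V_K := \ind_K^G \mathbf{1} \cong \cH(G) * e_K$, since $\pi$ is a $\cH(G)$-module quotient of $V_K^{\oplus m}$ for some $m$, and the non-vanishing of $\pi(e_B)$ on such a quotient implies the non-vanishing on $V_K$. For this cyclic module, $\pi(e_B) V_K = e_B * \cH(G) * e_K$, so the question is for which non-nilpotent special balls $B$ this space is non-zero inside $\cH(G)$.

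Next I analyze $e_B * \cH(G) * e_K$ by expanding $\cH(G)$ along a Cartan-type double coset decomposition $G = \bigsqcup K_n g K_n$ and computing each $e_B * \delta_g * e_K$. Using \Cref{lem:exp} to identify the relevant subgroups with lattices in $\fg$, and the defining data of the special ball $B = c + \alpha \cL^\perp$, the non-vanishing of $e_B * \delta_g * e_K$ translates into a compatibility condition between $\Ad^*(g^{-1})(c)$ and the lattice dual to $\Lie(K)$. When $B$ is non-nilpotent, the center $c$ satisfies $|c| = r^2$ or $|c\varpi| = r^2$ and the whole ball is moreover bounded away from $\cN$, forcing its $G$-orbit in $\fg^*$ to carry a uniform lower bound on its distance from $\cN$ measured by the lattice norm $|\cdot|$. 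A scaling argument then bounds the radii $r$ that are compatible with the fixed level $K_n$, and at each bounded radius there are only finitely many special balls in total, whence finiteness.

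The main obstacle is this last combinatorial step: concretely bounding the set of non-nilpotent special balls interacting non-trivially with a fixed level $K_n$. The proof for $\mathbf{G} = \GL_n$ in \cite{S} exploits explicit matrix coordinates and a careful analysis of Howe's splitting into semisimple and nilpotent parts; for a general reductive group I expect to need a coordinate-free substitute, using the Jordan decomposition in $\fg^*$ and the fact that each non-nilpotent $G$-orbit in $\fg^*$ stays at strictly positive lattice-distance from $\cN$. The technical details of this adaptation are naturally placed in \Cref{sec:balls}, where the theory of special balls for general reductive groups is developed.
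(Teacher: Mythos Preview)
Your overall architecture is close to the paper's: reduce to a fixed compact set $S$ of ``seed'' data (for you, $S=\cL_n^{\perp}$ corresponding to $e_{K_n}$; in the paper, the union of the special balls supporting a finite set of generators), and then show that $e_B*\delta_g*e_S\neq 0$ forces $\Ad(g)S\cap B\neq\emptyset$. That second step is exactly the content of the paper's \Cref{lem: fuzzy adjoint}, so your sketch is on track through that point.

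The gap is in your final step. You assert as a ``fact'' that a non-nilpotent $G$-orbit in $\fg^*$ stays at strictly positive lattice-distance from $\cN$. This is false. For example, in $\gl_2$ take $x=\operatorname{diag}(1,-1)$; conjugating by $\begin{pmatrix}1&0\\t&1\end{pmatrix}$ yields $\begin{pmatrix}1&0\\2t&-1\end{pmatrix}$, and the nilpotent matrix $\begin{pmatrix}1&-1/(2t)\\2t&-1\end{pmatrix}$ lies at lattice-distance $|1/(2t)|$, which tends to $0$ as $|t|\to\infty$. The lattice norm is not $\Ad(G)$-invariant, so separation of orbits in terms of invariant polynomials does not translate into a lower bound in $|\cdot|$.

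What is true, and what the paper uses, is the \emph{opposite} inequality: Howe's lemma (\Cref{lem:comp}, \cite[Lemma~12.2]{HCDBS}) says that for any compact $S\subset\fg^*$ there is a compact $S_1$ with $\Ad(G)S\subset S_1+\cN$. Applied to your $S=\cL_n^{\perp}$, this gives $\Ad(G)S\subset \cL_m^{\perp}+\cN$ for some $m$. Combined with $\Ad(g)S\cap B\neq\emptyset$, one gets $B\cap(\cL_m^{\perp}+\cN)\neq\emptyset$; if the radius of $B$ exceeds $q^m$ this forces $B\cap\cN\neq\emptyset$, i.e.\ $B$ is nilpotent. Since only finitely many special balls have radius $\le q^m$, the theorem follows. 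So your outline becomes correct once you replace the false lower-bound claim by Howe's upper-bound lemma and reverse the direction of the distance argument.
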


\begin{lemma}\label{lem:FouFuzz}
Let $B$ be an admissible ball and let $1_B\in \Sc(\fg^*)$ denote the characteristic function of $B$. Let $\cF(1_B)$  denote the Fourier transform of $1_B$\DimaB{, considered as a measure on $\fg$}. Then $\cF(1_B)=\exp^{*}(e_B )$.
\end{lemma}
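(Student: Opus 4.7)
The plan is to compute both sides explicitly as measures on $\fg$, supported in the lattice neighborhood of the origin where $\exp$ is defined, and then verify they agree. Write $B = a + \Lambda$ with $\Lambda \supset \cL^\perp$ a lattice in $\fg^*$, so that by definition $K_B = \exp(\Lambda^\perp)$ and $\eta_B(\exp(x)) = \psi(\langle a, x\rangle)$.

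First I will evaluate $\cF(1_B)$ directly. Using that translation becomes multiplication by a character under Fourier transform,
$$\cF(1_B)(x) \;=\; \psi(\langle a, x\rangle) \cdot \cF(1_\Lambda)(x),$$
and the remaining integral $\cF(1_\Lambda)(x) = \int_\Lambda \psi(\langle z, x\rangle)\,dz$ is a standard character-orthogonality computation on the compact group $\Lambda$: it equals $\mathrm{vol}(\Lambda)$ if $x \in \Lambda^\perp$ and vanishes otherwise. Hence $\cF(1_B)$ is supported on $\Lambda^\perp \subset \fg$ and equals $\mathrm{vol}(\Lambda) \cdot \psi(\langle a, x\rangle)$ times the ambient Haar measure on $\fg$ restricted to $\Lambda^\perp$.

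Next I will analyze $\exp^*(e_B)$. The admissibility of $B$ asserts that $K_B$ is a subgroup of $G$; combining this with the properties of $\exp$ from \Cref{lem:exp} (in particular the BCH estimate), one sees that $\exp$ restricts to a bijection $\Lambda^\perp \to K_B$ which is moreover a group isomorphism. Therefore $\exp^*$ carries the normalized Haar measure $e_{K_B}$ to the unique Haar measure on $\Lambda^\perp$ of total mass $1$, and $\exp^*(e_B)$ equals $\psi(\langle a, \cdot\rangle)$ times this normalized Haar measure on $\Lambda^\perp$, extended by zero to $\fg$.

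To finish, I compare the two expressions: both are supported on $\Lambda^\perp$, and both are $\psi(\langle a, \cdot\rangle)$ times a Haar measure on $\Lambda^\perp$, so it suffices to match a single normalization constant. This reduces to the standard self-duality identity $\mathrm{vol}(\Lambda) \cdot \mathrm{vol}(\Lambda^\perp) = 1$ for the Haar measures on $\fg$ and $\fg^*$, which holds precisely because the pairing together with $\psi$ is self-dual under our normalization of $\psi$ (trivial on $\cO$, non-trivial on $\varpi^{-1}\cO$). The only delicate point is this last identification of normalizations together with the verification that $\exp\colon \Lambda^\perp \to K_B$ preserves normalized Haar measures; the remainder is routine bookkeeping with the character-orthogonality relation and the definitions of $e_B$ and the Fourier transform.
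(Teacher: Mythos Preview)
Your argument is correct and is essentially the same computation as the paper's: both reduce to the character-orthogonality identity $\int_L \psi(\langle y,Z\rangle)\,d\mu(y)=1_{L^\perp}(Z)$ for a lattice $L$. The paper computes the Fourier transform of $\exp^*(e_B)$ and then invokes $\cF\circ\cF=-\mathrm{Id}$, whereas you compute $\cF(1_B)$ directly and compare; these are the same calculation run in opposite directions, and your version has the mild advantage of making the normalization identity $\mathrm{vol}(\Lambda)\cdot\mathrm{vol}(\Lambda^\perp)=1$ explicit.

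One small correction: your claim that $\exp\colon \Lambda^\perp \to K_B$ is a \emph{group isomorphism} is not true in general when $G$ is non-abelian, since by BCH $\exp^{-1}(\exp(X)\exp(Y))$ differs from $X+Y$ by commutator terms. What you actually need, and what you correctly single out at the end as the delicate point, is only that $\exp^*$ carries the normalized Haar measure on $K_B$ to the normalized Haar measure on $\Lambda^\perp$. This holds because $\exp$ is an analytic diffeomorphism whose Jacobian has absolute value $1$ on $\cL$ (equivalently, one can use \Cref{cor:exp}\eqref{it:exp} to see that $\exp$ carries cosets of $\cL_n$ to cosets of $K_n$ and hence preserves Haar measure). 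The paper's proof simply asserts this identification of measures without comment, so your treatment is in fact more careful; just replace the group-isomorphism justification by the Jacobian/measure-preserving argument.
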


\subsection{Proof of \Cref{thm:main}}\label{sec:PfWF}

We will need some preparations.
\begin{prop}[\RamiC{see \Cref{subsec:PfEigenFinSupp}}]\label{prop:EigenFinSupp}
Let $\xi \in \cS^*(G)$ be a $\fz$-finite distribution. Then there exists a compact subset $D\subset \g^*$  such that for any  non-nilpotent \DimaB{special} ball $B\subset \g^*\setminus D$ we have $e_{B}*\xi= 0$.
\end{prop}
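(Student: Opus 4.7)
My plan is to reduce the desired vanishing to \Cref{thm:FuzzyFin} by analyzing the left-convolution action of $e_B$ on a finite length representation built from $\xi$.

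By \Cref{cor:adm}, the space $W := \cS(G) * \xi * \cS(G)$ is of finite length as a $G \times G$-representation, where the two copies of $G$ act by left and right convolution respectively. I enumerate its (finitely many) irreducible composition factors as $\pi_1 \boxtimes \rho_1, \ldots, \pi_n \boxtimes \rho_n$. Each $\pi_j$ is irreducible, hence finitely generated, so \Cref{thm:FuzzyFin} furnishes a finite set $S_j$ of non-nilpotent \DimaB{special} balls outside of which $\pi_j(e_B) = 0$. Set $S := \bigcup_{j} S_j$ and
\[
D := \bigcup_{B \in S} B \subset \g^*.
\]
Since each \DimaB{special} ball is a coset of the compact lattice $\alpha \cL^\bot$ and $S$ is finite, the set $D$ is compact.

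Now fix a non-nilpotent \DimaB{special} ball $B \subset \g^* \setminus D$. Distinct \DimaB{special} balls being pairwise disjoint (each $Y \in \g^*$ lies in a unique $B(Y)$, cf.\ \Cref{lem:BallDisj}), we get $B \notin S$, and therefore $\pi_j(e_B) = 0$ for every $j$. Viewing $e_B \in \cH(G)$ as acting on $W$ by left convolution (through the first factor of $G \times G$), its action on each composition factor $\pi_j \boxtimes \rho_j$ equals $\pi_j(e_B) \otimes \mathrm{id}_{\rho_j} = 0$. I then propagate this vanishing to all of $W$ using idempotency: picking a $G \times G$-composition series $0 = W_0 \subset W_1 \subset \cdots \subset W_n = W$, vanishing on each quotient yields $e_B \cdot W_k \subset W_{k-1}$, and the identity $e_B * e_B = e_B$ (from \Cref{thm:fuzzy decomp}) gives $e_B \cdot W = e_B^n \cdot W \subset W_0 = 0$.

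It remains to translate $e_B \cdot W = 0$ into the distributional vanishing $e_B * \xi = 0$. For every $f, g \in \cS(G)$ we have $f * \xi * g \in W$, whence $(e_B * f) * \xi * g = 0$. Choosing $f = e_K$ for a sufficiently small compact open $K \subset K_B$ (inside the kernel of $\eta_B$) yields $e_B * e_K = e_B$, and consequently $(e_B * \xi) * g = 0$ for every $g \in \cS(G)$. Since $\langle e_B * \xi, \phi \rangle = \bigl((e_B * \xi) * \check{\phi}\bigr)(e)$ for any $\phi \in \cS(G)$, the distribution $e_B * \xi$ vanishes. The main subtlety I expect to navigate carefully is the idempotency-plus-composition-series reduction propagating vanishing from the composition factors to the whole of $W$; the remaining manipulations are routine bookkeeping with convolution conventions.
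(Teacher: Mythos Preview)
Your proof is correct and follows the same overall strategy as the paper: use \Cref{cor:adm} to realize $W=\cS(G)*\xi*\cS(G)$ as a finite-length $G\times G$-representation, invoke \Cref{thm:FuzzyFin} to produce a compact exceptional set $D$, and then pass from vanishing on $W$ to $e_B*\xi=0$.

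The implementation differs in two places. First, the paper applies \Cref{thm:FuzzyFin} directly to $W$ as a representation of $G\times G$ and takes $D$ to be the projection to the first coordinate of the resulting exceptional balls in $(\fg\times\fg)^*$; you instead pass to the irreducible composition factors $\pi_j\boxtimes\rho_j$ and apply \Cref{thm:FuzzyFin} to each $\pi_j$ on $G$. Your route implicitly uses that irreducible smooth representations of $G\times G$ are external tensor products (true here by admissibility), which the paper's route avoids. Second, to get $e_B\cdot W=0$ the paper decomposes on the right via \Cref{thm:fuzzy decomp} (writing $e_B*\xi*f=\sum_C e_B*\xi*f*e_C$ and killing each summand), whereas you use the idempotency $e_B=e_B^n$ together with a composition series to push $e_B\cdot W$ down to $W_0=0$. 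Both mechanisms are clean; yours is arguably more elementary, while the paper's stays closer to the special-ball formalism already in place.
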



The following is a straightforward computation.
\begin{lemma}
Let $B:=a+ \alp \RamiE{\cL^{\bot}}$ be an admissible ball\EitanA{, with $|\alpha|^{2}>|a|$}. Let $\DimaA{S}$ be the set of all \DimaB{special} balls contained in $B$. Then $e_B=\sum_{C \in \DimaA{S}}e_C$.
\end{lemma}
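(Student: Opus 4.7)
The plan is to reduce the claim to the set-theoretic partition identity $1_B = \sum_{C \in S} 1_C$ on $\fg^*$, and then to transport that identity to the Hecke algebra by the Fourier transform using \Cref{lem:FouFuzz}. The sum is automatically finite, since $B$ is compact open and the special balls form a partition of $\fg^*$ into compact open sets, so only finitely many can meet $B$. Admissibility forces $|\alpha|\ge 1$, hence $K_B = \exp(\alpha^{-1}\cL) \subset K := \exp(\cL)$; similarly $K_C \subset K$ for every special ball $C$ (since its radius is $\ge 1$). Thus $\exp^*(e_B)$ and each $\exp^*(e_C)$ are measures supported in $K$, and since $\exp\colon \cL \to K$ is an analytic diffeomorphism, $\exp^*$ is injective on such measures. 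Applying \Cref{lem:FouFuzz} termwise to the identity $\cF(1_B)=\sum_{C\in S}\cF(1_C)$ then upgrades the set-theoretic partition to the desired equality in $\cH(G)$.

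The crux is therefore the partition claim: for every $y\in B$, the unique special ball $B(y)=c+\alpha'\cL^\bot$ containing $y$ is itself contained in $B$. Rewriting $B = y + \alpha\cL^\bot$ and $B(y) = y + \alpha'\cL^\bot$, containment is equivalent to $|\alpha'|\le|\alpha|$, i.e.\ $r\le|\alpha|$, where $r:=|\alpha'|$ denotes the radius of $B(y)$. From the structure of a special ball together with the ultrametric inequality, I will establish the bound $r^2\le\max(1,|y|)$. Indeed, if $|y|\le 1$, then inspecting the two possible values $|c|\in\{r^2,qr^2\}$ rules out $r>1$ (else $|y-c|\le r<r^2\le|c|$ would force $|y|=|c|>1$), leaving $r=1$; if $|y|>1$, then $|y-c|\le r<|c|$ gives $|c|=|y|$ by the ultrametric, and $|c|\in\{r^2,qr^2\}$ yields $r^2\le|y|$.

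Since $y=a+\alpha z$ with $z\in\cL^\bot$, one has $|y|\le\max(|a|,|\alpha|)$. Combining this with the previous bound and the hypothesis $|\alpha|^2>|a|$ yields
\[
r^2 \;\le\; \max\bigl(1,|a|,|\alpha|\bigr) \;\le\; |\alpha|^2,
\]
using $|\alpha|\ge 1$ to control both $1$ and $|\alpha|$ by $|\alpha|^2$, and the hypothesis to control $|a|$. Since $r$ and $|\alpha|$ both lie in $q^{\bZ}$, this gives $r\le|\alpha|$, as required. The only real obstacle is keeping the case analysis for $|y|$ tidy and making sure the hypothesis $|\alpha|^2>|a|$ is invoked in precisely the case $|a|>|\alpha|$; beyond that, the argument is routine.
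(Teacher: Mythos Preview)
Your argument is correct and is precisely the kind of straightforward computation the paper invokes without detail: the partition $1_B=\sum_{C\in S}1_C$ on $\fg^*$ is established by the radius estimate $r^2\le\max(1,|y|)\le|\alpha|^2$, and \Cref{lem:FouFuzz} together with the injectivity of $\exp^*$ on measures supported in $K=\exp(\cL)$ transports it to $e_B=\sum_{C\in S}e_C$. One small point of exposition: in the case $|y|>1$ your claim $r<|c|$ is not literally true when $r=1$ and $|c|=r^2=1$, but that sub-case forces $B(y)=c+\cL^\bot=\cL^\bot$ and hence $|y|\le 1$, so it is vacuous and your conclusion stands.
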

\RamiC{The last 2 statements give us the following corollary.}
\begin{cor}\label{cor:Bxi0}
Let $\xi \in \cS^*(G)$ be a $\fz$-finite distribution. Then there exists a compact subset $D\subset \g^*$  s.t. for any non-nilpotent admissible ball of the form  $B:=a+ \alp\RamiE{\cL^{\bot}}\subset \g^*\setminus D$\EitanA{, with $|\alpha|^{2}>|a|$,}
we have $e_{B}*\xi= 0$.
\end{cor}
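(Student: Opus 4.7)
The plan is that this corollary should follow almost immediately by combining \Cref{prop:EigenFinSupp} with the preceding lemma that decomposes $e_B$ as a sum of $e_C$'s over the special balls $C$ contained in $B$. So the main work has already been done; here this is a packaging step.

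Concretely, I would take $D \subset \fg^*$ to be the compact set provided by \Cref{prop:EigenFinSupp}. Fix a non-nilpotent admissible ball $B = a + \alpha \cL^\bot$ contained in $\fg^* \setminus D$ with $|\alpha|^2 > |a|$. By the preceding lemma we can write
\begin{equation*}
e_B \;=\; \sum_{C \in S} e_C,
\end{equation*}
where $S$ is the set of special balls contained in $B$. I would then make two observations. First, since each $C \in S$ is a subset of $B \subset \fg^* \setminus D$, every such $C$ is disjoint from $D$. Second, since $B$ is non-nilpotent, i.e.\ $B \cap \cN = \emptyset$, and $C \subset B$, each $C \in S$ is also non-nilpotent. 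Hence \Cref{prop:EigenFinSupp} applies to every $C \in S$, giving $e_C * \xi = 0$. Convolving the displayed identity with $\xi$ and interchanging the sum with the convolution yields $e_B * \xi = 0$.

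The only non-bookkeeping point is justifying the interchange of the sum with convolution against $\xi$. This is not a real obstacle: the decomposition $e_B = \sum_{C} e_C$ is really a statement about the function $\eta_B \cdot \mathbf{1}_{K_B}$ being partitioned along the finer equivalence classes coming from the special-ball decomposition of $B$, so on any fixed compact open subset of $G$ only finitely many $e_C$ contribute, and convolution with a distribution is a local operation that commutes with such locally finite sums. Thus the corollary is immediate from the proposition and the lemma.
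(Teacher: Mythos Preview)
Your proof is correct and follows exactly the approach indicated in the paper, which simply states that the corollary follows from the two preceding statements. One minor remark: the sum $e_B=\sum_{C\in S}e_C$ is actually finite (since $B$ is bounded and special balls have radius $\geq 1$), so your last paragraph justifying the interchange of sum and convolution, while not incorrect, is unnecessary.
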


\begin{prop}[\RamiC{See \Cref{subsec:pfFouExp}}]\label{prop:FouExp}
Let \EitanA{$n,l>0$ and let} $B=a+  \RamiE{\varpi^{-n}\cL^{\bot}}$ be an admissible ball. Assume that $|a|=\RamiE{q^{n+l}}$. Then for any $\xi \in \cS^*(\exp(\DimaA{\varpi}^l\cL))$ we have $$\exp^*(e_B *\xi)=\exp^*(e_B)*\exp^*(\xi)$$ \end{prop}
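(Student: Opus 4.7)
I will prove the identity by pulling both sides back to $\fg$ via $\exp$ and comparing the resulting integrals, exploiting the Baker–Campbell–Hausdorff structure. By \Cref{lem:FouFuzz}, $\exp^*(e_B) = \cF(1_B)$ is (up to the normalization of Haar measure on $\fg$) the function $X \mapsto \psi(\langle a, X\rangle) \cdot 1_{\varpi^n\cL}(X)$. Write $\eta := \exp^*\xi$, a distribution on $\fg$ supported in $\varpi^l\cL$. Define
$$m(X, Y) := \exp^{-1}(\exp(X)\exp(Y)) = X + Y + R(X, Y).$$
By \Cref{lem:exp}\eqref{it:BCH}, $R(X, Y) \in \varpi^{n+l}\cL$ whenever $X \in \varpi^n\cL$ and $Y \in \varpi^l\cL$, so the image of $m$ lies in $\varpi^{\min(n,l)}\cL$. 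In particular, both $\exp^*(e_B * \xi)$ and $\exp^*(e_B) *_+ \eta$ are supported in $\varpi^{\min(n,l)}\cL$, and it suffices to test the identity against $f \in \cS(\varpi^{\min(n, l)}\cL)$.

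The heart of the argument is the key cancellation
$$\psi(\langle a, R(X, Y)\rangle) = 1 \qquad \text{for all } X \in \varpi^n\cL,\ Y \in \varpi^l\cL,$$
which follows from the hypothesis $|a| = q^{n+l}$: since $a \in \varpi^{-n-l}\cL^\bot$ and $R(X, Y) \in \varpi^{n+l}\cL$, the pairing $\langle a, R(X, Y)\rangle$ lies in $\cO$, on which $\psi$ is trivial. Equivalently, the character $\eta_B$ is trivial on $\exp(\varpi^{n+l}\cL)$. Unwinding the LHS, one obtains (with suitably normalized Haar measures)
$$\langle \exp^*(e_B * \xi), f\rangle = \int\int \psi(\langle a, X\rangle)\, f(m(X, Y))\, dX\, d\eta(Y).$$
For each fixed $Y \in \varpi^l\cL$, I then substitute $W := X + R(X, Y)$; since $R(X, Y) \in \varpi^{n+l}\cL \subset \varpi^n\cL$ and $\partial_X R$ is a small perturbation of zero, this is an analytic bijection of $\varpi^n\cL$ onto itself. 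After the substitution, $m(X, Y) = W + Y$ and, by the key cancellation, $\psi(\langle a, X\rangle) = \psi(\langle a, W - R(X, Y)\rangle) = \psi(\langle a, W\rangle)$. Hence the integral above becomes
$$\int\int \psi(\langle a, W\rangle)\, f(W + Y)\, dW\, d\eta(Y) = \langle \exp^*(e_B) *_+ \eta, f\rangle,$$
as desired.

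\textbf{Main obstacle.} The only delicate point is reconciling two Jacobians that appear: the Jacobian of $\exp$, which records the difference between Haar measure on $G$ and Lebesgue measure on $\fg$ in exp coordinates, and the Jacobian of the substitution $X \mapsto W$ for fixed $Y$. The clean way to see these cancel is to observe that the map $X \mapsto m(X, Y)$ in $\fg$ corresponds via $\exp$ to right multiplication by $\exp(Y)$ in $G$, which is Haar-measure preserving; this forces an exact cancellation between $|\det(I + \partial_X R(X, Y))|$ and the ratio of exp-Jacobians at $X$ and at $m(X, Y)$. The normalization for $\exp^*(e_B)$ provided by \Cref{lem:FouFuzz} then absorbs the remaining constants, delivering the equality of distributions.
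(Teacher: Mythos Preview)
Your approach is correct and takes a genuinely different route from the paper.  Both proofs rest on the same BCH estimate (\Cref{lem:exp}\eqref{it:BCH}) and on the fact that $|a|=q^{n+l}$ means $a\in\varpi^{-(n+l)}\cL^\bot$, so that the character $\eta_B$ is blind to errors in $\varpi^{n+l}\cL$.  The paper, however, packages this algebraically rather than analytically: it first treats the degenerate case $l=0$ (where $e_B=e_{K_n}$ is just a Haar idempotent) by a direct coset computation, and then for $l>0$ observes that both $e_B$ and $e_{K_{n+l}}*\xi$ descend to the \emph{finite} quotients $K_0/K_{n+l}\cong\cL_0/\cL_{n+l}$, where by BCH the map $\overline{\exp}$ is an honest group isomorphism and therefore intertwines the two convolutions (\Cref{cor:exp}\eqref{it:expbar}); two applications of the $l=0$ case then remove the auxiliary $e_{K_{n+l}}$.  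Your direct change of variables $W=X+R(X,Y)$ is shorter and makes the role of the hypothesis on $|a|$ completely transparent, while the paper's finite-quotient reduction avoids any discussion of Jacobians or smoothness in $Y$.

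On your ``main obstacle'': it is smaller than you suggest.  In the paper's setup $\exp$ is already measure-preserving on $\cL$ (\Cref{cor:exp}\eqref{it:expL}), so there is no exp-Jacobian to cancel.  For the substitution itself, every BCH term in $R(X,Y)$ carries at least one factor from $\varpi^l\cL$, so in $\cL$-adapted coordinates $\partial_XR$ has entries in $\varpi^l\cO$ and $\det(I+\partial_XR)\in 1+\varpi^l\cO$ has absolute value $1$.  The one point you assert without proof is that $X\mapsto W$ is a bijection of $\varpi^n\cL$ onto itself; this is true and follows from $l>0$ by a short valuation argument (if $\exp(X)\exp(Y)=\exp(W+Y)$ with $W\in\varpi^n\cL$ and $X\in\varpi^k\cL\setminus\varpi^{k+1}\cL$ for some $k<n$, then reducing mod $\varpi^{k+l}\cL\subset\varpi^{k+1}\cL$ gives $X\equiv W\equiv 0$, a contradiction).
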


\begin{proof}[Proof of \Cref{thm:main}]
Since any shift of $\xi$ is also $\fz$-finite, we can assume that $x$ is the unit element $1\in G$. Thus it is enough to show that $WF_0(exp^*(\xi))\subset \cN.$

\RamiE{Let $Y \in \fg^*$ be non-nilpotent. Then there exists $m$ such that  for  all big enough $\alp \in F$ the set $\alp (Y+\DimaA{\varpi}^m\cL^{\bot})$  is a non-nilpotent admissible ball. Let $B:=Y+\DimaA{\varpi}^m\cL^{\bot}$.
There exists $l$ such that $\varpi^{l-m}Y\in \cL^{\bot}$.
Let $\phi$ be the characteristic function of $\DimaA{\varpi}^{l}\cL$ and $\mu$ be the characteristic function of $K_{l}:=exp(\DimaA{\varpi}^{l}\cL)$. Let $\zeta:=\mu\xi$ and $\eta:=\phi exp^*(\xi)=exp^*(\zeta)$. We have to show that for all big enough $\alp \in F$ we have $\cF(\eta)|_{\alp B}=0$.
By  \Cref{cor:Bxi0}, for all big enough $\alp \in F$ we have $$\RamiC{e_{\alp B} * \zeta =0}.$$}
By \Cref{prop:FouExp}  for all big enough $\alp \in F$ we have: $$\RamiC{exp^{*}(e_{\alp B} * \zeta )}=exp^{*}(e_{\alp B} )*\eta =\cF^{-1}(\cF(\exp^{*}(e_{\alp B} ))\cF(\eta)).$$
\Cref{lem:FouFuzz} implies now that $\cF(\eta)|_{\alp B}=0$ for all big enough $\alp \in F$.
\end{proof}

\subsection{Proof of \Cref{prop:EigenFinSupp}}\label{subsec:PfEigenFinSupp}

%

\begin{proof}[Proof of \Cref{prop:EigenFinSupp}]
Let $\pi:=\Sc(G)*\xi*\Sc(G)$. By \Cref{cor:adm}, $\pi$ is a \DimaB{finite length} representation of $G\times G$ and thus, by \Cref{thm:FuzzyFin}, there exists a finite set $X$ of \DimaB{special} balls of $G\times G$ such that $\pi(e_{\cB})=0$ for a non-nilpotent $\cB \notin X$. Let $D$ be the union of the projections of the balls in $X$ to the first coordinate. It is easy to see that for any non-nilpotent \DimaB{special} ball $B\subset\g^* \setminus D$ and any \DimaB{special} ball $C, \, B\times C \notin X$ and thus for any   $f \in \cS(G)$ we have
$$e_{B}*\xi*f*e_{C}=
e_{B}*e_{B}*\xi*f*e_{C}=\pi(e_{B\times C})(e_{B}*\xi*f)=0$$
By \Cref{thm:fuzzy decomp}, $$e_{B}*\xi*f= \sum_{C,C'\in \mathfrak{F}} e_{C'}*e_{B}*\xi*f*e_C= \sum_{C\in \mathfrak{F}} e_{B}*\xi*f*e_C,$$
where the sum goes over all \DimaB{special} balls in $\g^*$.
This implies $e_{B}*\xi*f=0$. Since this holds for any $f \in \cS(G), \, e_{B}*\xi$ vanishes.
\end{proof}

\subsection{Proof of \Cref{prop:FouExp}}\label{subsec:pfFouExp}

$\,$

%
\RamiF{ From standard properties  of the exponential map (see \Cref{lem:exp}\eqref{it:BCH}) we obtain the following Corollary.}

\begin{cor}\label{cor:exp}$\,$
\begin{enumerate}[(i)]
\item \label{it:exp}
For any natural number $n$ and any $a \in \cL$ we have
$$ \exp(a+\cL_n)=\exp(a)\exp(\cL_n)=\exp(a)K_{n}$$
\item \label{it:expL}
Let $e_{K_0}$ be the Haar probability measure on $K_0$. Then $\exp^*(e_{K_0})$ is the Haar probability measure on $\cL$.

\item \label{it:expbar}
Let $n$ and $l$ be natural numbers. By \eqref{it:exp} we can define $\overline{\exp}:\cL_0/\cL_{n+l}\to K_0/K_{n+l}$. Let $\alp,\beta$ be measures on $K_0/K_{n+l}$ such that $\alp$ is supported on $K_l/K_{n+l}$ and $\beta$ is supported on $K_n/K_{n+l}$. Then $$\overline{\exp}^*(\alp *\beta)=\overline{\exp}^*(\alp)*\overline{\exp}^*(\beta)$$
\end{enumerate}
\end{cor}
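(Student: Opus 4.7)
The plan is to derive all three parts of the corollary from the sharp BCH estimate \eqref{it:BCH} in Lemma \ref{lem:exp} together with the one-parameter homomorphism property \eqref{it:hom}. The unifying theme is that whenever the ``BCH error'' $\exp^{-1}(\exp X \exp Y) - X - Y$ can be controlled to lie in a specified sub-lattice, the exponential map behaves as a partial group isomorphism between the corresponding additive and multiplicative quotients.

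For part \eqref{it:exp}, I would first obtain $\exp(a)\exp(\cL_n) \subseteq \exp(a + \cL_n)$ directly from \eqref{it:BCH}: for $y \in \cL_n = \varpi^n \cL$, applying the estimate with $X = a$ and $Y = y$ gives $\exp(a)\exp(y) = \exp(a + y + z)$ for some $z \in \varpi^{0+n}\cL = \cL_n$. For the reverse inclusion I plan to introduce the map $T \colon \cL_n \to \cL_n$, $T(y) := \exp^{-1}(\exp(a)\exp(y)) - a = y + z(y)$, and show it is a bijection. Injectivity of $T$ is immediate from injectivity of $\exp$, and $T$ preserves the filtration $\{\cL_m\}_{m \geq n}$ because $z(y) \in \cL_m$ whenever $y \in \cL_m$. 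Surjectivity is the main technical obstacle: I would argue by a $p$-adic Hensel-type iteration, using that the derivative of $T$ near $0$ equals $\id + \tfrac{1}{2}\ad(a) + \cdots$, which by \eqref{it:BCH} induces bijections on each finite quotient $\cL_n/\cL_m$; passing to the inverse limit and using compactness yields surjectivity of $T$ itself.

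Part \eqref{it:expL} then follows formally from part \eqref{it:exp}. The bijections $\cL/\cL_n \cong K_0/K_n$ for all $n$ give $[\cL : \cL_n] = [K_0 : K_n]$, so the Haar probability measure on $K_0$ assigns mass $1/[K_0 : K_n]$ to each coset $\exp(a) K_n$, which equals the mass $1/[\cL : \cL_n]$ that the Haar probability measure on $\cL$ assigns to the coset $a + \cL_n$. Since such cosets form a basis of the topology of $\cL$ and both measures are Radon probability measures, they coincide.

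For part \eqref{it:expbar} the key observation is that when $X \in \cL_l$ and $Y \in \cL_n$, the estimate \eqref{it:BCH} gives $\exp(X)\exp(Y) = \exp(X + Y + z)$ with $z \in \varpi^{l+n}\cL = \cL_{l+n}$. Thus in the quotient $K_0/K_{l+n}$ we have $\overline{\exp}(X) \cdot \overline{\exp}(Y) = \overline{\exp}(X + Y)$ for all such $X, Y$, so $\overline{\exp}$ intertwines addition on $\cL_l/\cL_{n+l} \times \cL_n/\cL_{n+l}$ with multiplication on $K_0/K_{n+l}$ on the relevant product set. Unwinding the definition of convolution on each side, and using part \eqref{it:expL} to match the base measures, the support conditions on $\alpha$ and $\beta$ guarantee that only pairs to which this partial-homomorphism property applies contribute, which yields the convolution identity.
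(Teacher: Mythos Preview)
The paper gives no proof of this corollary beyond the sentence ``From standard properties of the exponential map (see \Cref{lem:exp}\eqref{it:BCH}) we obtain the following Corollary.'' Your arguments for parts \eqref{it:expL} and \eqref{it:expbar} are correct and are essentially what any unpacking of that sentence would look like.

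The forward inclusion $\exp(a)K_n\subseteq\exp(a+\cL_n)$ in part \eqref{it:exp} is also fine. The gap is in the reverse inclusion, i.e., in the surjectivity of your map $T(y)=\exp^{-1}(\exp(a)\exp(y))-a$. From the bare estimate \eqref{it:BCH} with $X=a\in\cL=\varpi^{0}\cL$ and $Y=y\in\cL_n$ you only get $T(y)-y\in\cL_n$, not $\cL_{n+1}$, so your Hensel-type iteration has no contraction and need not converge; similarly, the linearization $\id+\tfrac12\ad(a)+\cdots$ is not unipotent on the quotients $\cL_n/\cL_m$ in general (for reductive $\fg$, $\ad(a)$ is typically semisimple, not nilpotent), so bijectivity on those finite quotients does not follow from \eqref{it:BCH} alone. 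The quickest fix is to use the full Baker--Campbell--Hausdorff expansion displayed in the proof of \Cref{lem:exp} (formula~\eqref{=BCH}), not merely its coarse corollary \eqref{it:BCH}: writing the candidate inverse as $S(c)=\exp^{-1}\bigl(\exp(-a)\exp(a+c)\bigr)$, every term in $S(c)-c$ is an iterated bracket in $-a$ and $a+c$; after expanding $\ad_{a+c}=\ad_a+\ad_c$, any summand containing no factor $c$ is an iterated bracket of $a$ with itself and hence vanishes, while any summand containing a $c$ lies in $\cL_n$ since $[\cL,\cL_n]\subseteq\cL_n$ (which holds for the lattice built in \Cref{subsec:BCH}). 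Thus $S(\cL_n)\subseteq\cL_n$, and since $T\circ S=\id$ tautologically, $T$ is surjective.
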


\begin{proof}[Proof of \Cref{prop:FouExp}]$\,$
\begin{enumerate}[Step 1.]
\item \label{FouExp:1} Proof for the case $l=0$.\\
In this case for any $\DimaB{b}\in \cL_0$ we have $$(e_{B} * \xi)|_{\exp(\DimaB{b})K_n}=(e_{K_n} * \xi)|_{\exp(\DimaB{b})K_n}=\left(\int_{\exp(\DimaB{b})K_n}\xi\right)(\#K_0/K_n) e_{K_0}|_{\exp(\DimaB{b})K_n}.$$
Also,
$$(\exp^*(e_B)*\exp^*(\xi))|_{\DimaB{b}+\cL_n}= \left(\int_{\DimaB{b}+\cL_n}\exp^*(\xi)\right)(\#\cL_0/\cL_n) \mu_{\cL_0}|_{\DimaB{b}+\cL_n},$$
where $\mu_{\cL_0}$ is the Haar probability measure on $\cL_0$.
By \Cref{cor:exp}\eqref{it:exp}, $$\exp^{-1}({\exp(\DimaB{b})K_n})=\DimaB{b}+\cL_n \text{ and } \int_{\exp(\DimaB{b})K_n}\xi=\int_{\DimaB{b}+\cL_n}\exp^*(\xi).$$
Thus, by \Cref{cor:exp}\eqref{it:expL}, $$exp^*(e_{K_0}|_{\exp(\DimaB{b})K_n})=\mu_{\cL_0}|_{\DimaB{b}+\cL_n}.$$
We get
\begin{multline*}
\exp^*(e_B*\xi)|_{(\DimaB{b}+\cL_n)}=\exp^*((e_B*\xi)|_{(\exp(\DimaB{b})K_n)})=\\
=\exp^*\left(\left(\int_{\exp(\DimaB{b})K_n}\xi\right)(\#K_0/K_n) e_{K_0}|_{\exp(\DimaB{b})K_n}\right)=\\=\left(\int_{\exp(\DimaB{b})K_n}\xi\right)(\#K_0/K_n)\exp^*(e_{K_0}|_{\exp(\DimaB{b})K_n})= \\ =\left(\int_{\exp(\DimaB{b})K_n}\xi\right)(\#K_0/K_n)\mu_{\cL_0}|_{\DimaB{b}+\cL_n}= \left(\int_{\DimaB{b}+\cL_n}\exp^*(\xi)\right)(\#\cL_0/\cL_n)\mu_{\cL_0}|_{\DimaB{b}+\cL_n}=\\=(\exp^*(e_B)*\exp^*(\xi))|_{\DimaB{b}+\cL_n}.
\end{multline*}

\item Proof for the general case.\\ Denote by $p_{\cL}$ and $p_K$ the natural projections $\cL_0\to \cL_0/\cL_{n+l}$ and $K_0\to K_0/K_{n+l}$. There exist measures \RamiC{$\beta$ and $\alp$ on $ K_0/K_{n+l}$ such that $e_{K_{n+l}}*\xi = p_K^*(\beta)$ and $e_B=p_K^*(\alp)$. Clearly $\supp (\beta)\subset K_l/K_{n+l}$ and $\supp (\alp)\subset K_n/K_{n+l}$.
We have
\begin{equation}\label{FouExp:=1}
\exp^*(e_B*\xi)=\exp^*(e_B*e_{K_{n+l}}*\xi)=\exp^*(p_K^*(\alp)*p_K^*(\beta))=
\exp^*(p_K^*(\alp*\beta)).
\end{equation}

From the commutative diagram
\begin{equation}\label{diag}\xymatrix{\parbox{20pt}{$\cL_0$}\ar@{->}^{\exp}[r]\ar@{->}^{p_{\cL}}[d]&
\parbox{20pt}{$K_0$}\ar@{->}^{p_K}[d]\\
\parbox{40pt}{$\cL_0/\cL_{n+l}$}\ar@{->}^{\overline{\exp}}[r] &
\parbox{40pt}{$K_0/K_{n+l}$}}
\end{equation}
 we have
\begin{equation}\label{FouExp:=2}
\exp^*(p_K^*(\alp*\beta))=p_L^*(\overline{\exp}^*(\alp*\beta)).
\end{equation}

By \Cref{cor:exp}\eqref{it:expbar}
we have
\begin{equation}\label{FouExp:=3}
p_L^*(\overline{\exp}^*(\alp*\beta))= p_L^*(\overline{\exp}^*(\alp) * \overline{\exp}^*(\beta)) =
p_L^*(\overline{\exp}^*(\alp))* p_L^*(\overline{\exp}^*(\beta)).
\end{equation}

Applying the diagram \eqref{diag} again we get
\begin{equation}\label{FouExp:=4}
p_L^*(\overline{\exp}^*(\alp))* p_L^*(\overline{\exp}^*(\beta))=\exp^*(p_K^*(\alp))* \exp^*(p_K^*(\beta))=\exp^*(e_B)*\exp^*(e_{K_{n+l}}*\xi).
\end{equation}

Applying Step \ref{FouExp:1} twice we have
\begin{multline}\label{FouExp:=5}
\exp^*(e_B)*\exp^*(e_{K_{n+l}}*\xi)
=\exp^*(e_B)*\exp^*(e_{K_{n+l}})*\exp^*(\xi)=\\=\exp^*(e_{K_{n+l}})*\exp^*(e_B)*\exp^*(\xi)=\exp^*(e_{K_{n+l}}*e_B)*\exp^*(\xi)=\exp^*(e_B)*\exp^*(\xi).
\end{multline}
Combining (\ref{FouExp:=1},\ref{FouExp:=2}-\ref{FouExp:=5}) we get $\exp^*(e_B*\xi)=\exp^*(e_B)*\exp^*(\xi)$.
}
\end{enumerate}
\end{proof}

\subsection{Regularity of invariant $\fz$-finite distributions at cotoric elements and proof of \Cref{thm:reg}}\label{sec:reg}

\setcounter{lemma}{0}

In this section we prove a generalization of \Cref{thm:reg}. We will need the following notion.

\begin{defn}
Let $\mathbf{H_1},\mathbf{H_2} < \mathbf{G}$ be algebraic subgroups. We say that an element $g\in G$ is $H_1 \times H_2$-\emph{cotoric} if the conormal space to $H_1gH_2$ at $g$ intersects trivially the nilpotent cone of $\fg^*$.
\end{defn}

\begin{lem}\label{lem:SymSmooth}
\DimaB{Let $\mathbf{H}$ be an open subgroup the group of fixed points of an involution  $\theta$ of $\mathbf{G}$.}
Let $g\in G$ be an $\mathbf{H} \times \mathbf{H}$-regular semisimple element, i.e. an element such that the double coset $\mathbf{H} g \mathbf{H}$ is closed and of maximal dimension. Then $g$ is $\mathbf{H} \times \mathbf{H}$-cotoric.

In particular, the set of cotoric elements contains an open dense subset of $\mathbf{H} \times \mathbf{H}$.
\end{lem}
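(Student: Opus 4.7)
My plan is to identify the conormal space at $g$ with $\mathfrak{q}\cap\fg^x\subset\fg$, where $x:=g\theta(g)^{-1}$ and $\fg=\fh\oplus\mathfrak{q}$ is the $\pm 1$-eigenspace decomposition of $d\theta$, and then use the structure theory of symmetric pairs to rule out nonzero nilpotents. First I would fix an $\Ad(G)$-invariant, $\theta$-equivariant, nondegenerate symmetric bilinear form on $\fg$, which exists because $\mathbf{G}$ is reductive. Under the right-action identification $T_gG\cong\fg$ the tangent space to $HgH$ at $g$ is $\fh+\Ad(g)\fh$, so the form identifies the conormal space in $\fg^*$ with $\mathfrak{q}\cap\Ad(g)\mathfrak{q}\subset\fg$, while also identifying the nilpotent cone of $\fg^*$ with that of $\fg$. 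A direct computation using $\theta(g)g^{-1}=x^{-1}$ then shows that for $Y\in\mathfrak{q}$ one has $Y\in\Ad(g)\mathfrak{q}$ iff $\Ad(x)Y=Y$; hence the conormal space equals $\mathfrak{q}\cap\fg^x$.

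Next I would use the map $\tau\colon G\to G^{-\theta}$, $\tau(g):=g\theta(g)^{-1}$. Its fibers are right $H$-cosets and $\tau$ intertwines left $H$-multiplication on $G$ with $H$-conjugation on $G^{-\theta}$; combined with the identity $\dim HgH=2\dim H-\dim Z_H(x)$ this translates ``$HgH$ closed of maximal dimension'' into ``$x$ semisimple with $Z_H(x)$ of minimal dimension among semisimple elements of $G^{-\theta}$''. Assume for contradiction that $N\in\mathfrak{q}\cap\fg^x$ is a nonzero nilpotent. Since $\theta(x)=x^{-1}$, $\fg^x$ is $\theta$-stable and reductive, so $(\fg^x,\theta|_{\fg^x})$ is again a symmetric pair. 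By the Kostant--Rallis theorem (valid in characteristic zero), $N$ extends to an $\mathfrak{sl}_2$-triple $(N,H_0,F_0)$ with $H_0\in\fh\cap\fg^x$, $F_0\in\mathfrak{q}\cap\fg^x$, and $[H_0,N]=2N\neq 0$. Setting $g_t:=\exp(tN)g$ and using $\theta(N)=-N$ together with $\Ad(x)N=N$, a short computation gives $\tau(g_t)=x\exp(2tN)$, which is a Jordan decomposition with nontrivial unipotent part for $t\neq 0$. Since $H_0\in\fh^x$ satisfies $[H_0,N]\neq 0$, we get $\fh^x\cap\fh^N\subsetneq\fh^x$, hence $\dim Z_H(\tau(g_t))<\dim Z_H(x)$ and therefore $\dim Hg_tH>\dim HgH$, contradicting maximality.

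For the ``in particular'' statement, the preimage under $\tau$ of the regular semisimple locus in $G^{-\theta}$ is Zariski open, nonempty, and $\mathbf{H}\times\mathbf{H}$-stable, so it yields an open dense subset of cotoric elements in $G$. The main obstacle I anticipate is the chain of translations in the middle paragraph: one must carefully justify the equivalence of ``$HgH$ closed of maximal dimension'' with the semisimplicity and minimality conditions on $x$, and then invoke Kostant--Rallis in the non-Archimedean setting (the classical arguments go through by base change to $\bar F$ since $\operatorname{char} F=0$). At the Lie algebra level, the fact that $\mathbf{H}$ is only assumed to be an open subgroup of $\mathbf{G}^\theta$ is immaterial since $\fh$ coincides with the full $+1$-eigenspace of $d\theta$.
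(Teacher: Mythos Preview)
Your approach is correct and shares the same backbone as the paper's proof: identify the conormal space at $g$ with $\mathfrak{q}\cap\fg^x$ via an $\Ad$-invariant $\theta$-invariant form (the paper writes this as $\fg_x^{\sigma}$ with $\sigma(g)=\theta(g^{-1})$, which is the same thing), use that $x=g\theta(g)^{-1}$ is semisimple so $\fg^x$ is reductive, and apply Jacobson--Morozov for the symmetric pair $(\fg^x,\theta|_{\fg^x})$ to a putative nilpotent $N$.

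The one genuine difference is how you reach the contradiction. The paper invokes Luna's slice theorem: since $\mathbf{H}g\mathbf{H}$ is closed of maximal dimension, the stabilizer $(\mathbf{H}\times\mathbf{H})_g\cong \mathbf{H}_x$ must act \emph{trivially} on the normal slice $\fg_x^{\sigma}$; but the semisimple element of the $\mathfrak{sl}_2$-triple produces an $h\in H_x$ with $\ad(h)N=tN$, $t\neq 1$, a contradiction. You instead run the deformation $g_t=\exp(tN)g$, compute $\tau(g_t)=x\exp(2tN)$ as a Jordan decomposition, and use $H_0\in\fh^x\setminus\fg^N$ to force $\dim Z_H(\tau(g_t))<\dim Z_H(x)$, contradicting maximality directly. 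These are two packagings of the same idea (a nontrivial slice action is equivalent to the existence of nearby larger orbits); your version is more hands-on and avoids citing Luna, while the paper's version is shorter once the slice theorem and \cite[Prop.~7.2.1]{AG_HC} are granted. The translations you flag as obstacles (semisimplicity of $x$ from closedness of the orbit, and the symmetric-pair Jacobson--Morozov) are exactly what the paper outsources to \cite{AG_HC}.
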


\begin{proof}
 Let $\sigma$ be the anti-involution given by $\sigma(g):=\theta(g^{-1})$. Let $(\mathbf{H}\times \mathbf{H})_g$ be the stabilizer of $g$ with respect to the two-sided action of $ \mathbf{H} \times \mathbf{H}$, and $N_{\mathbf{H}g \mathbf{H},g}^{\mathbf{G}}$ be the normal space to the double coset $\mathbf{H}g \mathbf{H}$ at $g$ in $\mathbf{G}$. Since $g$ is $\mathbf{H} \times \mathbf{H}$-regular semisimple, the Luna slice theorem (see e.g. \cite[Theorem 5.4]{Dre}) implies that $(\mathbf{H}\times \mathbf{H})_g$ acts trivially on $N_{\mathbf{H}g \mathbf{H},g}^{\mathbf{G}}$.

Let $x=g\sigma(g)$. \DimaB{By \cite[Proposition 7.2.1(ii)]{AG_HC}, the pair consisting of the group  $(\mathbf{H}\times \mathbf{H})_g$ and its action on $N_{\mathbf{H}g \mathbf{H},g}^{\mathbf{G}}$ is isomorphic to the pair consisting of the centralizer $\mathbf{H}_x$ and   its adjoint action on the centralizer $\mathfrak{g}_x^{\sigma}$ of $x$ in the space $\mathfrak{g}^{\sigma}$ of fixed points of $\sigma$ in $\fg$.  Since $g$ is $\mathbf{H}\times\mathbf{H}$-semisimple,  \cite[Proposition 7.2.1(i)]{AG_HC}  shows that $x$ is a semisimple element of $\mathbf{G}$. Thus $\mathbf{G}_x$ is a reductive group.}

Now, assume that $x$ is not cotoric. Then, using a non-degenerate $\theta$-invariant and $\mathbf{G}$-invariant quadratic form on $\fg$ \DimaB{(see e.g. \cite[Lemma 7.1.9]{AG_HC})}, we can find a nilpotent element $\alpha \in \mathfrak{g}_x^{\sigma}$. Using the Jacobson-Morozov theorem for symmetric pairs (see e.g. \cite[Lemma 7.1.11]{AG_HC}), for some $t\neq 1\in F$ we can find an element $h\in H_x$ such that $ad(h)(\alpha)=t\alpha$. This contradicts the fact that $H_x$ acts trivially $\mathfrak{g}_x^{\sigma}$.
\end{proof}

In view of \Cref{lem:WFSmooth}, \Cref{thm:main} gives us the following corollary.

\begin{cor}\label{cor:sm}
Let $\mathbf{H_1},\mathbf{H_2} < \mathbf{G}$ be algebraic subgroups. Let $\chi_i$ be characters of $H_i$, and let $\xi$ be an $(H_1\times H_2,\chi_1\times \chi_2)$-equivariant $\DimaA{\fz}$-finite distribution. Let $x\in G$ be an $H_1 \times H_2$-\emph{cotoric} element. Then $\xi$ is smooth at $x$.
\end{cor}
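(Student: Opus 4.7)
The plan is to combine the two parts of \Cref{lem:WFSmooth} with \Cref{thm:main} in a direct way; the cotoric hypothesis then immediately forces the wavefront set at $x$ to be trivial, yielding smoothness.

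First, I would invoke \Cref{thm:main} applied to the $\fz$-finite distribution $\xi$ at the point $x$. This gives the inclusion
\[
WF_x(\xi) \subset \cN,
\]
where $\cN \subset \fg^* \cong T_x^*G$ is the nilpotent cone (the identification coming from translating the Lie algebra to $x$ via the right action, as in \Cref{thm:main}).

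Second, I would apply \Cref{lem:WFSmooth}\eqref{it:equiv} to the analytic action of $H_1 \times H_2$ on $G$ by two-sided translation. Since $\xi$ transforms by the character $\chi_1 \times \chi_2$ under this action, \Cref{lem:WFSmooth}\eqref{it:equiv} yields
\[
WF_x(\xi) \subset \{v \in T_x^*G \mid v((\fh_1 \times \fh_2) \cdot x) = 0\},
\]
which is precisely the conormal space to the orbit $H_1 x H_2$ at the point $x$.

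Combining the two inclusions, $WF_x(\xi)$ lies in the intersection of $\cN$ with the conormal to $H_1 x H_2$ at $x$. By the very definition of a cotoric element, this intersection is trivial, so $WF_x(\xi) = \{0\}$. Finally, I would invoke \Cref{lem:WFSmooth}\eqref{it:smooth} to conclude that $\xi$ is smooth at $x$. I do not anticipate any serious obstacle here: the argument is essentially a packaging of the three lemmas, and the only thing one needs to verify carefully is that the identification of $T_x^*G$ with $\fg^*$ used in \Cref{thm:main} is compatible with the identification implicit in the statement of ``cotoric'', which it is since the nilpotent cone is $\Ad$-invariant and the conormal space is a well-defined subspace of $T_x^*G$ regardless of trivialization.
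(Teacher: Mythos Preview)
Your proposal is correct and follows exactly the approach the paper indicates: the paper's proof is simply the sentence ``In view of \Cref{lem:WFSmooth}, \Cref{thm:main} gives us the following corollary,'' and you have spelled out precisely that combination of \Cref{thm:main}, \Cref{lem:WFSmooth}\eqref{it:equiv}, and \Cref{lem:WFSmooth}\eqref{it:smooth}.
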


In view of \Cref{lem:SymSmooth} this corollary implies \Cref{thm:reg}.


\appendix
\section{Fuzzy balls (joint with Alexander Kemarsky) }\label{sec:balls}
\setcounter{lemma}{0}

In this section we prove the statements on admissible balls and \DimaB{special} balls formulated in \Cref{subsec:PrelFuzzy}. \RamiC{We} follow \cite[\S 4 and \S 5.1]{S}. Throughout the section we assume that $F$ has characteristic zero.
\RamiF{
\subsection{The exponential map and proof of \Cref{lem:exp}}\label{subsec:BCH}

It is enough to prove \Cref{lem:exp} for $\mathbf{G}=\GL_n$. Consider the power series $$\mathcal{E}xp(X):=\sum_{k=0}^{\infty}a_k X^k:=\sum_{k=0}^{\infty} X^k/k!  \text{ and } \mathcal{L}og(X):=\sum_{k=0}^{\infty}b_k (X-1)^k:=\sum_{k=1}^{\infty} (-1)^{k-1}(X-1)^k/k,$$
where  $X\in \Mat_{n\times n}(F)$. The Baker-Campbell-Hausdorff formula is the following equality of power series
\begin{multline} \label{=BCH}
\mathcal{L}og(\mathcal{E}xp(X+Y))-X-Y=\\=\sum_{n=1}^{\infty}\sum_{|i|+|j|= n}\left(c_{ij} ad^{i_1}_X \circ ad^{j_1}_Y \cdots ad^{i_k}_X \circ ad^{j_k}_Y(X)  +d_{ij} ad^{i_1}_Y \circ ad^{j_1}_X \cdots ad^{i_k}_Y \circ ad^{j_k}_X(Y) \right),
\end{multline}
where $i=(i_1,\dots,i_{k})$ and $j=(i_1,\dots,i_{k})$ are multi-indices and $c_{ij},d_{ij}\in \bQ\subset F$ are certain constants. Let \EitanA{$\alp_n:=\max(|a_n|,|b_n|,\max_{|i|+|j|=n}(|c_{ij}|),\max_{|i|+|j|=n}(|d_{ij}|))$}. It is well known for some constant $C>1$ and all $n\geq 1$ we have $\alp_n\leq C^n$. Define $\cL:=\{X\in \Mat_{n\times n}(F)  \, \vert \, |X_{ij}|<C^{-1}\}$. It is easy to see that the power series $\mathcal{E}xp$ converge on $\cL$. We define $\exp$ to be the corresponding analytic map, and $K$ to be $\exp(\cL)$. Finally, it follows from \eqref{=BCH} that $(\cL,K,\exp)$ satisfy the requirements \eqref{it:hom}-\eqref{it:BCH}.
\proofend
}

\subsection{Proof of \Cref{thm:fuzzy decomp} }\label{subsec:ballsDecomp}
We start with the following easy lemma.

\begin{lemma}\label{lem:BallDisj} Let $\cB$ denote the collection of all \DimaB{special} balls. Then $\fg^*$ decomposes as a disjoint union
$$\gotG^* = \coprod_{B\in\cB} B.$$
\end{lemma}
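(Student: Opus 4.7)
The plan is to produce, for each $Y \in \fg^*$, a distinguished special ball $B(Y)$ containing it (giving the covering), and then to verify that any special ball containing $Y$ must coincide with $B(Y)$ (giving the disjointness).

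For the existence half, I would split according to $|Y|$. If $|Y| \le 1$, so that $Y \in \cL^\bot$, take $B(Y) := \cL^\bot$: picking any $c \in \cL^\bot$ with $|c| = 1$ one has $\cL^\bot = c + 1 \cdot \cL^\bot$, with $r = 1$ and $|c| = r^2$, which is case (a) of the definition of a special ball. If $|Y| = q^n$ with $n \ge 1$, set $m := \lfloor n/2 \rfloor$ and take $B(Y) := Y + \varpi^{-m}\cL^\bot$: the data $c := Y$, $\alpha := \varpi^{-m}$ gives $r = q^m \ge 1$, and $|c| = q^n$ equals $r^2$ when $n = 2m$ (case (a)) and $r^2 q$ when $n = 2m+1$ (case (b)). Either way, $B(Y)$ is a special ball containing $Y$.

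For the uniqueness half, let $B = c + \alpha\cL^\bot$ be any special ball containing $Y$, with $|\alpha| = r \ge 1$ and $|c| \in \{r^2, r^2 q\}$. Then $|Y - c| \le r$, so applying the ultrametric inequality to $Y = c + (Y - c)$ together with the constraint on $|c|$ pins down $r$ as a function of $|Y|$: when $r > 1$, both $r^2$ and $r^2 q$ strictly exceed $r \ge |Y-c|$, so $|c| > |Y-c|$ forces $|Y| = |c| \in \{r^2, r^2 q\}$, determining $r$; when $r = 1$ and $|c| = q$ the same argument forces $|Y| = q$; and when $r = 1 = |c|$ one has directly $c \in \cL^\bot$, hence $B = \cL^\bot$ and $|Y| \le 1$. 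Once $r$ is known, the lattice $\alpha\cL^\bot$ is determined (multiplication by any unit of $F$ preserves $\cL^\bot$), and the coset $c + \alpha\cL^\bot$ then equals $Y + \alpha\cL^\bot$, so $B = B(Y)$.

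The only slightly delicate step is the bookkeeping of the boundary case $r = 1$, where the two subcases $|c| = 1$ and $|c| = q$ must be dealt with separately; beyond that, the whole argument is a routine application of the ultrametric inequality and I do not anticipate any real obstacle.
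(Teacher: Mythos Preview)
Your proof is correct and follows essentially the same approach as the paper: both establish covering by the case split on $|Y|$ with $m=\lfloor n/2\rfloor$, and both establish disjointness by using the ultrametric inequality to show that the radius of any special ball containing a given point is determined by the norm of that point. Your treatment of the boundary case $r=1$ is in fact slightly more careful than the paper's, which asserts ``$|z|=|X|=|Y|$'' without separately addressing the subcase $r=|c|=1$ (where only $|z|\le 1$ holds); your explicit handling of this case is an improvement.
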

\begin{proof}
Let $X \in \gotG^*$. If $|X| \le 1$, then $X \in \RamiE{\cL_1^\bot}$. If $|X| = q^m > 1$, then
$X \in \RamiE{X+\cL^{\bot}_{[\frac{m}{2}]}} $. Thus $\gotG^* = \bigcup B$. Let $\RamiE{B_1 = X+\cL_m^{\bot},\,
B_2 = Y+\cL_n^{\bot}}$ be \DimaB{special} balls and suppose that $z = X + l_1 = Y+l_2 \in B_1 \cap B_2$.
Then $|z| = |X| = |Y|$, so $m=n$. Let $Y+l' \in B_2$. We can rewrite this element as
$$Y+l' = Y+l_2 + l'-l_2 = z + (l'-l_2) \in X + \cL_m^{\perp} = B_1.$$
We have obtained $B_2 \subset B_1$ and clearly by the same argument applied to $B_1$, we obtain
$B_1 \subset B_2$. Therefore, $B_1 = B_2$.
\end{proof}
\RamiF{Let $(\pi,V)$ be a smooth representation.
The following lemma is straightforward.}
\begin{lemma}\label{lem:proj}
For an admissible ball $B$, the image of $\pi(e_B)$ consists of $(\eta_B^{-1},K_B)$-equivariant vectors in $V$, i.e.
$$\pi(e_B)V = \left\{ v \in V: \pi(k)v = \eta_B^{-1}(k) v \;\; \forall k\in K_B  \right\}.
$$ Moreover, $e_B$ is a projection, that is $e_B=e_B^2$.
\end{lemma}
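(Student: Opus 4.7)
The plan is to treat this as a standard idempotent-via-character argument, adapted to the present setting. The first step is to unwind the definition: since $e_B = \eta_B \cdot e_{K_B}$ with $e_{K_B}$ the normalized Haar measure on $K_B$, for any smooth representation $(\pi,V)$ and any $v\in V$ we have
\[
\pi(e_B)v \;=\; \int_{K_B} \eta_B(k)\,\pi(k)v\,dk .
\]
Note that this integral is actually a finite sum, since $v$ is fixed by some open subgroup of $K_B$ by smoothness, so all convergence issues are trivial.

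For the inclusion $\pi(e_B)V \subset \{v : \pi(k)v = \eta_B^{-1}(k)v\ \forall k\in K_B\}$, I will fix $k_0\in K_B$ and change variables $k \mapsto k_0^{-1}k$ in the integral defining $\pi(k_0)\pi(e_B)v$. The invariance of the Haar measure and multiplicativity of $\eta_B$ (which is guaranteed by the definition of admissible ball, since $\eta_B$ is required to be a character of $K_B$) will extract the scalar $\eta_B(k_0^{-1}) = \eta_B^{-1}(k_0)$, yielding the desired equivariance. For the reverse inclusion, if $v$ already satisfies $\pi(k)v = \eta_B^{-1}(k)v$ for all $k\in K_B$, then the integrand equals $\eta_B(k)\eta_B^{-1}(k)v = v$ at each point, so $\pi(e_B)v = v$. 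This simultaneously shows the reverse inclusion and that $\pi(e_B)$ acts as the identity on the equivariant subspace.

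Idempotency is then immediate: for arbitrary $v\in V$, the vector $\pi(e_B)v$ lies in the $(\eta_B^{-1},K_B)$-equivariant subspace by the first part, so the second part gives $\pi(e_B)\pi(e_B)v = \pi(e_B)v$. Thus $\pi(e_B^{\ast 2} - e_B) = 0$ in every smooth representation of $G$; applying this to the left regular representation on $\Sc(G)$ (where $\pi(f)$ is injective on compactly supported functions under convolution in the usual sense) yields the equality $e_B * e_B = e_B$ in $\cH(G)$.

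I do not anticipate any real obstacle here: the only delicate point is to make sure $\eta_B$ is a genuine character of $K_B$, and this is exactly the hypothesis built into the notion of admissible ball. The argument is essentially the classical construction of the $\chi$-isotypic projector $\int_K \chi(k)\pi(k)\,dk$ for a character $\chi$ of a compact group $K$, applied here with $K=K_B$ and $\chi=\eta_B$.
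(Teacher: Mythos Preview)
Your proof is correct and is precisely the standard character-projector argument one would expect; the paper itself omits the proof entirely, stating only that the lemma is ``straightforward.'' There is nothing to compare: your write-up simply supplies the details the authors left to the reader.
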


\begin{lemma}\label{lem:orthproj}
Let $B_1, B_2$ be distinct \DimaB{special} balls. Then $e_{B_1} e_{B_2} = 0$
\end{lemma}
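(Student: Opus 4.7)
The plan is to compute $e_{B_1}\ast e_{B_2}$ directly and reduce its vanishing to the orthogonality of characters on a compact group, combined with the disjointness of distinct special balls (\Cref{lem:BallDisj}). Write $B_i = c_i + \alpha_i \cL^{\bot}$, and assume without loss of generality that $|\alpha_1| \le |\alpha_2|$. Then $\alpha_1^{-1}\cL \supseteq \alpha_2^{-1}\cL$, so that $K_{B_1} = \exp(\alpha_1^{-1}\cL) \supseteq \exp(\alpha_2^{-1}\cL) = K_{B_2}$. This containment is the key geometric input and makes the rest of the argument essentially character-theoretic.

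First I would check that $e_{B_1}\ast e_{B_2}$ is supported in $K_{B_1}K_{B_2}=K_{B_1}$. For $g \in K_{B_1}$, the substitution $h=gk^{-1}$ with $k\in K_{B_2}$ parametrizes all pairs $(h,h^{-1}g)$ with $h \in K_{B_1}$ and $h^{-1}g \in K_{B_2}$. Using that each $\eta_{B_i}$ is a character, the convolution takes the form
$$(e_{B_1}\ast e_{B_2})(g) \;=\; \frac{\eta_{B_1}(g)}{\mu(K_{B_1})}\int_{K_{B_2}} \bigl(\eta_{B_2}\cdot(\eta_{B_1}|_{K_{B_2}})^{-1}\bigr)(k)\,\frac{dk}{\mu(K_{B_2})}.$$
By orthogonality of characters on the compact group $K_{B_2}$, the inner integral vanishes unless $\eta_{B_2} = \eta_{B_1}|_{K_{B_2}}$.

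Finally I would show that such an equality would force $B_1=B_2$, contradicting the hypothesis. Evaluating both characters at $\exp(x)$ for $x\in \alpha_2^{-1}\cL$ rewrites it as $\psi(\langle c_2-c_1,x\rangle)=1$ for all such $x$, hence $c_2-c_1 \in (\alpha_2^{-1}\cL)^{\bot}=\alpha_2\cL^{\bot}$, i.e., $c_1\in B_2$. Since also $c_1\in B_1$, the balls meet, and \Cref{lem:BallDisj} then gives $B_1=B_2$. I do not anticipate any substantial obstacle; the only point requiring a brief check is that $\eta_{B_1}$ really does restrict to the character $\exp(y)\mapsto \psi(\langle c_1,y\rangle)$ on $K_{B_2}$, which is immediate from the defining formula on $K_{B_1}\supseteq K_{B_2}$.
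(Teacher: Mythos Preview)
Your proof is correct and follows essentially the same approach as the paper: both arguments show that nonvanishing of $e_{B_1}e_{B_2}$ forces $\eta_{B_1}$ and $\eta_{B_2}$ to agree on the smaller of $K_{B_1},K_{B_2}$, and then invoke \Cref{lem:BallDisj} to conclude $B_1=B_2$. The only cosmetic difference is that the paper reaches the character identity via the commutation relation $a\,e_{B_1}e_{B_2}=e_{B_1}\,a\,e_{B_2}$ for $a\in K_{B_1}\cap K_{B_2}$, whereas you compute the convolution integral directly and appeal to orthogonality of characters; these are two phrasings of the same computation.
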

\begin{proof}
Suppose $B_1 \ne B_2$ and $e_{B_1}e_{B_2} \ne 0$. Then for any $a \in K :=
K_{B_1} \cap K_{B_2}$ we have $$\eta_{B_1}(a^{-1})e_{B_1} e_{B_2}=
ae_{B_1} e_{B_2} = e_{B_1}a e_{B_2} = \eta_{B_2}(a^{-1})
e_{B_1}e_{B_2}  $$
We get $\eta_{B_1}|_K = \eta_{B_2}|_K$. Now, if $K_{B_1} = K_{B_2}$, then
$B_1 = B_2$, a contradiction. Otherwise we can assume $K_{B_1} \subset K_{B_2}$,
but then the character $\eta_{B_1}$ is a restriction of $\eta_{B_2}$ from the bigger
group $K_{B_2}$, thus $B_1$ and $B_2$ intersect and thus by \Cref{lem:BallDisj} they coincide, which again is a contradiction.
\end{proof}

\begin{lemma}\label{lem:BigBallVan}
Let  $v \in V^{K_N}$ and $B$ be a \DimaB{special} ball with radius bigger
than $q^N$. Then $\pi(e_B)v = 0$.
\end{lemma}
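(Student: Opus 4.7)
The plan is to reduce $\pi(e_B)v$ to a scalar multiple of $v$ using the $K_N$-invariance of $v$, and then show that the scalar vanishes by orthogonality of characters. Writing the special ball as $B = c + \alpha \cL^{\bot}$ with $|\alpha| = r > q^N$, we have $r = q^m$ for some integer $m \geq N+1$. Then $\Lambda^{\bot} = \alpha^{-1}\cL = \cL_m$, so $K_B = \exp(\cL_m) = K_m$. Since $m > N$ we have $\cL_m \subset \cL_N$ and hence $K_m \subset K_N$, which gives $\pi(k)v = v$ for every $k \in K_B$. By definition $e_B = \eta_B \cdot e_{K_B}$, so
\begin{equation*}
\pi(e_B)v = \int_{K_B}\eta_B(k)\pi(k)v\, dk = v \int_{K_B}\eta_B(k)\, dk.
\end{equation*}

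The key step — the one that actually uses the full force of the special-ball definition — is to verify that $\eta_B$ is a nontrivial character of $K_B$, after which the integral vanishes by the standard orthogonality of characters on a compact group. Unwinding definitions, $\eta_B(\exp x) = \psi(\langle c, x\rangle)$ for $x \in \Lambda^{\bot} = \cL_m$, so $\eta_B$ is trivial on $K_m$ if and only if $\langle c, \cL_m\rangle \subset \cO = \ker\psi$, equivalently $c \in \cL_m^{\bot} = \varpi^{-m}\cL^{\bot}$, equivalently $|c| \leq q^m = r$. But the special-ball condition forces $|c| \in \{r^2,\, qr^2\}$, both strictly greater than $r$ since $r \geq q > 1$. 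Hence $\eta_B$ is nontrivial on $K_B$, so $\int_{K_B}\eta_B(k)\, dk = 0$, and $\pi(e_B)v = 0$.
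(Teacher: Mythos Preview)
Your proof is correct and follows exactly the same approach as the paper's: reduce $\pi(e_B)v$ to $\bigl(\int_{K_B}\eta_B\bigr)v$ using $K_B\subset K_N$, then observe that the integral vanishes because $\eta_B$ is a nontrivial character. The paper compresses this into a single displayed equation without justifying the two implicit steps, whereas you spell out both the containment $K_B=K_m\subset K_N$ and the nontriviality of $\eta_B$ from the size condition $|c|\in\{r^2,qr^2\}>r$.
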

\begin{proof}
$$\pi(e_B)v = \int_{K_B} \eta_B(k) \pi(k)v dk = \left(\int_{K_B} \eta_B(k) dk \right)v = 0.$$
\end{proof}

\begin{lemma}\label{lem:FuzzyNonVan}
For every $0 \neq v \in V$, there exists a \DimaB{special} ball $B$, such that
$\pi_B(v) \neq 0$.
\end{lemma}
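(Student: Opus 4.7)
The plan is to reduce to Fourier analysis on a finite abelian quotient of the congruence filtration, read off an admissible ball whose projector does not annihilate $v$, and then refine to a special ball.

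Since $(\pi,V)$ is smooth and $v \neq 0$, there exists an integer $m \geq 1$ with $v \in V^{K_{2m-1}}$. The Baker--Campbell--Hausdorff formula from \Cref{lem:exp} implies that $K_{2m-1}$ is normal in $K_m$ and the quotient $K_m/K_{2m-1}$ is abelian; moreover $\exp$ induces a group isomorphism $\mathcal{L}_m/\mathcal{L}_{2m-1} \xrightarrow{\sim} K_m/K_{2m-1}$. Via $\psi$, the Pontryagin dual of this finite abelian group is naturally identified with the quotient lattice $\varpi^{-(2m-1)}\mathcal{L}^{\bot}/\varpi^{-m}\mathcal{L}^{\bot}$, a class $a$ corresponding to the character $\eta_a(\exp X) := \psi(\langle a,X\rangle)$.

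Since $K_m$ acts on $V^{K_{2m-1}}$ through this abelian quotient and $v \neq 0$, orthogonality of characters produces some $a$ whose isotypic projection $P_a v$ is nonzero. Set $B' := a + \varpi^{-m}\mathcal{L}^{\bot}$. Using the BCH estimate for $X,Y\in \mathcal{L}_m$, one checks directly that $B'$ is an admissible ball with $K_{B'}=K_m$ and $\eta_{B'}=\eta_a$. The key calculation is then: since $v$ is $K_{2m-1}$-fixed and since $\eta_{B'}|_{K_{2m-1}}=1$ (because $\langle a, \mathcal{L}_{2m-1}\rangle \subseteq \mathcal{O}$), integrating over cosets $g K_{2m-1}\subset K_m$ gives
\[
\pi(e_{B'})v \;=\; \frac{1}{[K_m:K_{2m-1}]}\sum_{g\in K_m/K_{2m-1}} \eta_a(g)\,\pi(g)v \;=\; P_a v \;\neq\; 0.
\]

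To conclude, note that for $\alpha := \varpi^{-m}$ we have $|\alpha|^2 = q^{2m} > q^{2m-1} \geq |a|$. Hence the decomposition of an admissible ball into special balls (stated immediately before \Cref{cor:Bxi0}) yields $e_{B'} = \sum_{C \in S} e_C$, summed over the special balls $C \subseteq B'$. Applying $\pi$ gives $0 \neq \pi(e_{B'})v = \sum_{C\in S}\pi(e_C)v$, so $\pi(e_C)v \neq 0$ for at least one special ball $C$, as desired.

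The main technical point is the identification $\pi(e_{B'})v = P_a v$: it requires both the triviality of $\eta_{B'}$ on $K_{2m-1}$ (itself a BCH consequence) and a careful bookkeeping of Haar normalizations. The only other ingredient beyond standard Fourier analysis on $K_m/K_{2m-1}$ is the admissible-to-special decomposition, which is available from \Cref{subsec:PrelFuzzy}.
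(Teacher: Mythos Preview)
Your proof is correct. Both your argument and the paper's hinge on Fourier analysis on a finite abelian quotient of the congruence filtration, but the endgame differs. The paper chooses $n$ \emph{minimal} with $v\in V^{K_{2n}}$ and $v\notin V^{K_{2n-2}}$; this minimality forces the relevant character of $K_n/K_{2n}$ to be nontrivial on $K_{2n-2}/K_{2n}$, so that the associated center $a$ satisfies $|a|\in\{q^{2n-1},q^{2n}\}$ and the resulting ball is already special. You instead take any $m$ with $v\in V^{K_{2m-1}}$, land on an admissible (not necessarily special) ball $B'$, and then invoke the decomposition $e_{B'}=\sum_{C\subset B'} e_C$ into special balls to extract one with $\pi(e_C)v\neq 0$. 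Your route avoids the minimality bookkeeping and the separate treatment of the case $v\in V^{K_0}$, at the cost of importing the admissible-to-special decomposition lemma. That lemma is logically independent of \Cref{lem:FuzzyNonVan} (it is a direct computation with measures on $G$, requiring no representation theory), so there is no circularity, though in the paper's layout it appears later in the main text rather than in the appendix where \Cref{lem:FuzzyNonVan} lives.
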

\begin{proof}
Let $0 \ne v \in V$. If $v \in V^{K_0}$ then $v \in V(B)$ for $B = 0 +\cL^{\bot}$.\\
Suppose $v \not \in V^{K_0}$. Let $n \ge 1$ be the minimal $n$ such that
 $v \in V^{K_{2n}},
v \not \in V^{K_{2n-2}}$. Thus the group $A = K_n / K_{2n}$ acts on the finite dimensional
space $W$ generated in $V$ by the orbit $K_n v$. Note that $K_n/K_{2n} \simeq \cL_n/ \cL_{2n}$ and
by \RamiF{standard properties  of the exponential map (see \Cref{lem:exp}\eqref{it:BCH})}
the group $\cL_n/\cL_{2n}$ is commutative.
Thus, the group $A$ is a commutative finite group.
The space $W$ can be decomposed as a direct sum of  one-dimensional characters of $A$. For a character $\chi$ of $A$ and $w \in W$,
let $w(\chi) \in W$ be the projection of $w$ to the $\chi$-isotypic component of $W$.
Then
$v =\sum_{\chi \in A^{*}} v(\chi)$, and thus there exists a character $\chi$ of $A$ with $v(\chi)\neq 0$ and $\chi|_{K_{2n-2}/K_{2n}}\neq 1$. Lift $\chi$ to a character $\eta$ of $K_n$ and note that there exists a unique \DimaB{special} ball $B$ with $K_B=K_{n}$ and $\eta_B=\eta$. Then $\pi(B)v=v(\chi)\neq 0$.
\end{proof}

\begin{proof}[Proof of \Cref{thm:fuzzy decomp}]
Part \eqref{thm:fuzzy decomp:orthproj} follows from \Cref{lem:proj,lem:orthproj}. To prove part \eqref{thm:fuzzy decomp:full} take $0 \neq v \in V$, and let $w = \sum_{\DimaA{ B \in \mathfrak{F}}} \pi(e_B)v$. By \Cref{lem:BigBallVan} we know that the
sum is well-defined. By \Cref{lem:proj,lem:orthproj}, $\pi(e_B)(v-w) = 0$ for all \DimaB{special} balls $B$. By \Cref{lem:FuzzyNonVan}, it
follows that $v = w$.
\end{proof}

\subsection{Proof of \Cref{thm:FuzzyFin}}\label{subsec:ballsFin}

\begin{defn} A \emph{\DimaB{special} set} is a finite union of \DimaB{special} balls. For a \DimaB{special} set $T = \cup B_i$, denote
$e_T := \sum e_{B_i}$.
\end{defn}

\begin{lemma}\label {lem: fuzzy adjoint}
Let $T$ and $S$ be two \DimaB{special} sets in $\gotG^*$ and let $g\in G$. Let $(\pi,V)$ be a  smooth  representation of $G$. Suppose that $\pi(e_T) \pi(g) \pi(e_S) \ne 0$. Then
$ad(g)S \cap T \ne \emptyset$.
\end{lemma}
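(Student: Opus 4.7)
The plan is to first reduce to the case of single special balls and then translate the non-vanishing hypothesis into a character-matching condition that can be interpreted in the Lie algebra via $\exp$.

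First I would reduce to the case $T=B$, $S=C$ single special balls. By definition $e_T=\sum_i e_{B_i}$ and $e_S=\sum_j e_{C_j}$. By \Cref{thm:fuzzy decomp}\eqref{thm:fuzzy decomp:orthproj} the projectors are mutually orthogonal, so sandwiching gives $\pi(e_{B_{i}})\pi(e_T)\pi(g)\pi(e_S)\pi(e_{C_{j}})=\pi(e_{B_{i}})\pi(g)\pi(e_{C_{j}})$. Hence if $\pi(e_T)\pi(g)\pi(e_S)\neq 0$, there exist indices with $\pi(e_{B_i})\pi(g)\pi(e_{C_j})\neq 0$; it then suffices to prove $Ad^*(g)(C_j)\cap B_i\neq\emptyset$.

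Next I would extract a character-matching condition on $K:=K_B\cap gK_Cg^{-1}$. Choose $v\in V$ with $w:=\pi(e_B)\pi(g)\pi(e_C)v\neq 0$ and set $u:=\pi(e_C)v$. By \Cref{lem:proj}, $u\in V^{(K_C,\eta_C^{-1})}$, so $\pi(g)u$ is $(gK_Cg^{-1},\eta')$-equivariant with $\eta'(gkg^{-1}):=\eta_C^{-1}(k)$. Using $\pi(k)\pi(e_B)=\eta_B^{-1}(k)\pi(e_B)=\pi(e_B)\pi(k)$ for $k\in K_B$, computing $\pi(k)w$ in two ways for $k\in K$ yields
\begin{equation*}
\eta_B^{-1}(k)=\eta'(k)\qquad\text{for all }k\in K.
\end{equation*}

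Now I would translate to $\fg^*$. Write $B=a+\Lambda_B$, $C=c+\Lambda_C$, so $K_B=\exp(\Lambda_B^{\bot})$, $K_C=\exp(\Lambda_C^{\bot})$, and $\eta_B(\exp(x))=\psi(\langle a,x\rangle)$, $\eta_C(\exp(y))=\psi(\langle c,y\rangle)$. The lattice $L:=\Lambda_B^{\bot}\cap Ad(g)(\Lambda_C^{\bot})$ lies in $\cL$, and using the standard identity $g\exp(y)g^{-1}=\exp(Ad(g)y)$ (which holds on $L$, since then $y=Ad(g^{-1})x\in\Lambda_C^{\bot}\subset\cL$), we see $\exp(L)\subset K$. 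For $x=Ad(g)y\in L$ the equality of characters gives
\begin{equation*}
\psi(-\langle a,x\rangle)=\eta_B^{-1}(\exp(x))=\eta'(\exp(x))=\eta_C^{-1}(\exp(y))=\psi(-\langle Ad^*(g)c,x\rangle),
\end{equation*}
so $\langle a-Ad^*(g)c,x\rangle\in\cO$ for all $x\in L$, i.e. $a-Ad^*(g)c\in L^{\bot}$. By lattice duality $L^{\bot}=\Lambda_B+Ad^*(g)\Lambda_C$, so $a-Ad^*(g)c=\lambda+Ad^*(g)\mu$ with $\lambda\in\Lambda_B$, $\mu\in\Lambda_C$. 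Then $a-\lambda=Ad^*(g)(c+\mu)\in B\cap Ad^*(g)(C)$, which completes the proof.

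The main technical point is the interplay between $\exp$ and conjugation: $Ad(g)$ need not preserve $\cL$, so one cannot in general identify $K_B\cap gK_Cg^{-1}$ with $\exp(\Lambda_B^{\bot}\cap Ad(g)\Lambda_C^{\bot})$. The step above circumvents this by only using the inclusion $\exp(L)\subset K$, which is enough to force the required equality $a-Ad^*(g)c\in L^{\bot}$; the dual lattice calculation does the rest.
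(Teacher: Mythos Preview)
Your proof is correct and follows essentially the same route as the paper's: reduce by linearity to single balls, derive the character identity $\eta_B(a)=\eta_C(g^{-1}ag)$ on $K_B\cap gK_Cg^{-1}$, pull back via $\exp$ to obtain $a-Ad^*(g)c\in(\Lambda_B^{\bot}\cap Ad(g)\Lambda_C^{\bot})^{\bot}=\Lambda_B+Ad^*(g)\Lambda_C$, and read off the intersection point. Your explicit remark that only the inclusion $\exp(L)\subset K$ is needed (rather than an identification) is a nice clarification that the paper leaves implicit, and your consistent distinction between $Ad$ and $Ad^*$ is cleaner than the paper's somewhat loose notation.
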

\begin{proof}
By linearity we reduce to the case where $$T=B=X + \cL_m^{\bot} \text{ and } S=B'=X' + \cL_{m'}^{\bot}$$ are \DimaB{special} balls. Let
$$K= K_B=\exp(\cL_m), \,K' = K_{B'}=\exp(\cL_{m'}),\,\eta = \eta_B, \eta' = \eta_{B'}$$  We first check that
$$\eta(a) = \eta'(g^{-1}ag) $$ for all $a \in K \cap gK'g^{-1}$. Indeed, let
$v \in V$ be such that $\pi(e_B) \pi(g) \pi(e_B')(v) \ne 0$. Then for all $a \in K \cap gK'g^{-1}$
we have
\begin{equation}\label{eq:1}
\eta(a)^{-1} \pi(e_B) \pi(g) \pi(e_{B'})(v) = \pi(a) \pi(e_B )\pi(g) \pi(e_{B'})(v) =
\pi(e_B) \pi(a) \pi(g) \pi(e_{B'})(v),
\end{equation}
since $\pi(e_B) \pi(a) = \pi(a) \pi(e_B)$ for $a \in K$. On the other hand,
\begin{equation}\label{eq:2}
\pi(e_B) \pi(a) \pi(g) \pi(e_{B'})(v) = \pi(e_B) \pi(g) \pi(g^{-1}ag) \pi(e_{B'})(v) =
\eta'^{-1}(g^{-1}ag) \pi(e_B)\pi(g) \pi(e_{B'})(v).
\end{equation}
Combining equations (\ref{eq:1}) and (\ref{eq:2}), we obtain $\eta(a)
= \eta'(g^{-1}ag)$ for every $a \in K \cap gK'g^{-1}$.
Therefore, $$ \psi_0\left(X(\log(a))\right) = \psi_0
\left(X'(Ad(g^{-1})\log(a))\right) = \psi_0\left( (Ad(g)X')(\log(a)) \right). $$ \\
We see that for $b \in \cL_m \cap Ad(g)\cL_{m'}$,
$$(Ad(g)X' - X)(b) \in \OO.$$ Thus,
$$ Ad(g)X' - X \in (\cL_m \cap Ad(g)\cL_{m'})^{\perp} = \cL_m^{\perp} +
Ad(g)\cL_{m'}^{\perp},$$
that is, there exist $u \in \cL_m^{\perp}$ and $v \in \cL_{m'}^{\perp}$ such that
$$ Ad(g)X' - X = u + Ad(g)v.$$
Hence $Ad(g)(X'-v) = X + u \in X +\cL_m^{\bot}=B$.
\end{proof}

The following Lemma due to Howe plays a central role in the proof of \Cref{thm:FuzzyFin}.
\begin{lemma}[{\cite[Lemma 12.2]{HCDBS}}]\label{lem:comp}
Let $S \subset \gotG$ be compact. There exists a compact subset $S_1$ such that $$Ad(G)S
\subset S_1 + \NN.$$
\end{lemma}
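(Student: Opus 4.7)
The plan is to use the characteristic polynomial map to reduce to a statement about a compact set of invariants, and then apply structure theory of reductive $p$-adic groups to conclude. First I would fix a faithful finite-dimensional rational representation $\rho\colon \mathbf{G}\to \GL(V)$ and identify $\fg$ with its image in $\End(V)$ under $\rho_*$. The characteristic polynomial $\chi\colon \End(V)\to F^{\dim V}$ is a continuous, polynomial, $\Ad(\GL(V))$-invariant map, and the full nilpotent cone in $\End(V)$ is exactly $\chi^{-1}(0)$. Since $S$ is compact, $T:=\chi(S)$ is compact, and by invariance $\Ad(G)S\subseteq \chi^{-1}(T)\cap \fg$. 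Thus the problem reduces to showing that for every compact $T\subset F^{\dim V}$, the preimage $\chi^{-1}(T)\cap \fg$ is contained in $S_1+\NN$ for some compact $S_1\subset \fg$.

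Next I would analyze semisimple orbits in this preimage. Using Jordan decomposition (valid since $F$ has characteristic zero), each $Y\in \chi^{-1}(T)\cap \fg$ equals $Y_s+Y_n$ with $Y_s$ semisimple, $Y_n\in \NN$, and $[Y_s,Y_n]=0$. By Chevalley's restriction theorem the semisimple $\Ad(G)$-orbits in $\fg$ are parametrized by $\ft/W$ for a Cartan subalgebra $\ft$ and its Weyl group $W$, and over a $p$-adic field there are only finitely many $G$-conjugacy classes of Cartan subalgebras $\ft_1,\dots,\ft_k\subset \fg$. Since each map $\ft_i\to \ft_i/W_i$ is a finite branched cover of an open subset of $\fg//G$, the preimage of $T$ in each $\ft_i$ is compact, so the set $\Sigma := \bigcup_i (\ft_i\cap \chi^{-1}(T))$ is compact and contains an $\Ad(G)$-representative of every semisimple orbit meeting $\chi^{-1}(T)$.

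The delicate step is to convert a statement about $Y_s$ lying \emph{up to $\Ad(G)$-conjugacy} in $\Sigma$ into a statement about $Y$ itself lying in $S_1+\NN$. For this I would use the Iwasawa decomposition $G=KMN$, where $P=MN$ is a minimal parabolic and $K$ a good maximal compact subgroup: writing $g=kmn$, we have $\Ad(g)X=\Ad(km)\Ad(n)X$, and since $[\mathfrak{n},\mathfrak{p}]\subset \mathfrak{n}\subset \NN$, the $\Ad(n)$-factor only shifts elements of $\mathfrak{p}$ by nilpotents. The $\Ad(k)$-factor preserves a compact set because $K$ is compact, and the $\Ad(M)$-factor can be handled by induction on the semisimple $F$-rank of $G$. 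The base case (anisotropic $G$) is trivial, as then $G$ is compact and $\Ad(G)S$ is automatically compact.

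The main obstacle is that a generic $X\in S$ will not lie in any parabolic subalgebra $\mathfrak{p}$: its component in the opposite nilradical $\bar{\mathfrak{n}}$ is not shifted into $\NN$ by $\Ad(N)$, so the naive Iwasawa reduction fails on this piece. A standard way around this is to use the Cartan decomposition $G=K A^+ K$ (with $A^+$ the positive Weyl chamber of a maximal $F$-split torus) and to expand $\Ad(a)X$ in $A$-weight spaces: as $a$ moves in $A^+$, components in positive root spaces remain in $\NN$, components in negative root spaces decay into a bounded set, and the centralizer-of-$A$ component is absorbed into $S_1$. Combining this with the finiteness of Cartan classes and the induction on $F$-rank is the technical heart of Harish-Chandra's original argument in \cite{HCDBS}.
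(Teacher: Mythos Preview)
The paper does not give its own proof of this lemma: it is quoted verbatim from Harish-Chandra's lecture notes \cite[Lemma 12.2]{HCDBS} and used as a black box in the proof of \Cref{thm:FuzzyFin}. So there is no in-paper argument to compare your proposal against.

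As for the proposal itself, the strategy is the standard one and the outline is sound up to the point you yourself flag. The reduction via the characteristic-polynomial map and Jordan decomposition is correct, and so is the observation that finiteness of $G$-conjugacy classes of Cartan subalgebras gives a compact set $\Sigma$ of semisimple representatives. You correctly isolate the real difficulty: from $Y_s$ being \emph{conjugate} into $\Sigma$ one only gets $Y\in \Ad(g^{-1})\Sigma+\cN$ for some $g$ depending on $Y$, and $\Ad(G)\Sigma$ is not compact. Your $KA^+K$ sketch is the right idea, but as written it does not close the gap: you would need to show that for $X$ in the \emph{original} compact $S$ (not in the unbounded orbit $\Ad(G)S$) and $a\in A^+$, the negative-root components of $\Ad(a)X$ are uniformly bounded, and then combine this with the induction on $F$-rank in a way that keeps track of which element is being conjugated at each stage. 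Your last sentence essentially concedes this and defers to \cite{HCDBS}, which is exactly what the paper does; so the proposal is a reasonable road map rather than a self-contained proof.
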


\begin{proof}[Proof of \Cref{thm:FuzzyFin}]
Suppose $V$ is generated by $v_1,...,v_n$ and for each $i$ pick all \DimaB{special} balls
$B_{ij}$ such that $\pi(e_{B_{ij}})(v_i) \ne 0$. Note that
by \Cref{{lem:BigBallVan}} for every $v \in V$, there are only finitely
many \DimaB{special} ball $B$ such that $\pi(e_B)v \ne 0$.  Let $S = \cup B_{ij}$.
By \Cref{thm:fuzzy decomp} $\pi(S)v_i=v_i$.
 Since $S$ is compact, \Cref{lem:comp} implies
$$Ad(G)S \subset \cL_{m}^{\bot} + \NN$$ for some large $m$.
Let $B$ be a \DimaB{special} ball such that $\pi(e_B) \ne 0.$  Let us show that there exists  $g \in G$ such
that $\pi(e_B) \pi(g) \pi(e_S) \ne 0$.
Indeed, suppose on the contrary that $\pi(e_B) \pi(g) \pi(e_S) = 0$ for every
$g \in G$. Let $v$ be such that $\pi(e_B)v \neq 0$ and write $$v = \sum_{j=1,1 \le i_j \le n }^{k} c_i \pi(g_i) v_{i_j}. $$
Then $$\pi(e_B)(v) = \sum_{j=1,1 \le i_j \le n }^{k} c_i \pi(e_B) \pi(g_i) v_{i_j}
= \sum_{j=1,1 \le i_j \le n }^{k} c_i \pi(e_B) \pi(g_i) \pi(e_S) v_{i_j} = 0, $$
and we obtain a contradiction!
By Lemma \ref{lem: fuzzy adjoint} $$Ad(g)S \cap B \ne \emptyset.$$
In particular $B \cap (\cL_{m}^{\bot} + \NN) \ne \emptyset.$ Suppose $B = X+\cL_n^\bot$ with $n \ge m$ and
let $Y \in B \cap (\cL_{m}^{\bot}+ \NN)$. Then $Y = X + l = l' + n$, $l \in \cL_m^{\perp}, l' \in \cL_n^{\perp}$, and
$n \in \NN$. In particular, $n = X + (l-l') \in X + \cL_n^{\perp}$, so $n \in B \cap \NN$. \\
We have obtained that every \DimaB{special} ball $B$ that acts on $V$ as non-zero and has big enough radius is
a nilpotent \DimaB{special} ball. Since the number of \DimaB{special} balls with a bounded radius is finite, we obtain that
all except of finitely many non-nilpotent balls act on $V$ as zero.
\end{proof}

\subsection{Proof of \Cref{lem:FouFuzz}}\label{subsec:PfFouFuzz}
We follow \cite[\S 5.1]{S}.
Assume that $ B = B(X,L)$.
Note that $\exp^*(e_B) = f \mu$, where $\mu$ is the Haar measure on $L$, normalized such
that $\mu(L)=1$ and $f$ is a function given by $f(y) =\psi_0(\langle y,X\rangle) 1_L(y)$.
Then $$\cF(\mu)(Z) = \int_{y \in L} \psi_0(\langle y,X+Z\rangle) d\mu(y).$$
The last integral is an integral of an additive character on an additive group. Such an integral is zero, unless
the character is trivial. In our case this means that the integral is zero, unless
$X+Z \in L^{\perp}$, which happens if and only if $-Z \in X + L^{\perp}$ and in that case the integral equals $1$.
Therefore,  $\cF(\mu)(Z) = 1_{X +L^{\perp}}(-Z)$. As $\cF \circ \cF = -Id$ (under the identification
$\gotG \simeq \gotG^{**}$), we get that $\cF(1_B) = \exp^*(e_B)$, as claimed.

\section{Finite Generation of Hecke Modules (by A. Aizenbud and D. Gourevitch)} \label{sec:FinGen}
In this section we prove \Rami{a stronger version of \Cref{thm:FinGen}. For its formulation we will need the following definition.
\begin{defn}$ $
Let $(G,(H,\chi))$ be a twisted pair, i.e. $H <G$ is a (closed) subgroup, and $\chi$ is its character.
\begin{enumerate}
\item Denote by $D_{G/H}$ the $G$-equivariant sheaf of smooth measures on $G/H$ and by $\Delta_{G/H}$ its fiber at $[1]\in G/H$, considered as a character of $H$.
Note that $\Delta_{G/H}=(\Delta_{G})|_H\cdot\Delta_{H}^{-1}=\Delta_{H}^{-1}$.
\item We define the \emph{dual} of the twisted pair $(G,(H,\chi))$ to be the pair $(G,(H,\DimaA{\hat \chi}))$, where $\DimaA{\hat \chi}=\Delta_{G/H}\chi^{-1}$. \RamiC{Note that $\DimaA{\hat{ \hat \chi}}=\chi.$}
\end{enumerate}
\end{defn}

The following theorem clearly implies \Cref{thm:FinGen}.
\begin{theorem}\label{thm:ExpFinGen}
Let $(G,(H,\chi))$ be a twisted pair.
Let $\mathbf{P}$ be a minimal parabolic  subgroup of $\mathbf{G}$ and $P=\mathbf{P}(F)$. Suppose that $H$ has finitely many orbits on $G/P$.
Suppose that for any
irreducible smooth representation $\rho$ of $G$ we have
\begin{equation} \label{eq:FinMultExp}
\dim\Hom_{H}(\rho|_{H}, \DimaA{\hat \chi}) < \infty .
\end{equation}
 Then for any open compact subgroup $K$ of $G$ the module $\ind_H^G(\chi)^K$ over the Hecke algebra $\cH_K(G)$ is finitely generated.

\end{theorem}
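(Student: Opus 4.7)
The plan is to combine Bernstein's decomposition theorem with a fiber-wise analysis over the Bernstein variety, using Frobenius reciprocity to link the finite-multiplicity hypothesis for $\hat\chi$ to the finite-dimensionality of generic fibers, and using the finite-orbit hypothesis to establish a coherence property that promotes generic finiteness to global finite generation.

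First, I would fix a compact open splitting subgroup $K \subset G$ (refining if necessary by \Cref{thm:Ber}\eqref{it:split}) and set $V := \ind_H^G\chi$. By \Cref{thm:Ber}\eqref{1}\eqref{2}, $\cH_K(G)$ is a finitely generated module over its Noetherian center $\fz(\cH_K(G))$, which is a finite direct sum of Bernstein centers $\fz(\Omega)$ indexed by the Bernstein blocks $\Omega$ meeting $\cM_K(G)$. Hence it suffices to show that each $\fz(\Omega)$-module $V_\Omega^K$ is finitely generated over the finitely-generated reduced $\bC$-algebra $\fz(\Omega)$ (\Cref{thm:Ber}\eqref{3}).

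Second, I would analyze the fibers of $V_\Omega^K$ over the Bernstein variety $\Theta_\Omega := \Spec\fz(\Omega)$. At a generic closed point $\theta \in \Theta_\Omega$ corresponding to an irreducible representation $\rho_\theta$ in the block, Bernstein's theory identifies the fiber $V_\Omega^K \otimes_{\fz(\Omega)} \CC_\theta$ with $\rho_\theta^K \otimes_\CC \Hom_G(V,\rho_\theta)$. The dual character $\hat\chi$ is defined precisely so that Frobenius reciprocity for compact induction produces an identification $\Hom_G(\ind_H^G\chi, \rho_\theta) \cong \Hom_H(\tilde\rho_\theta|_H, \hat\chi)$ (up to a modular-character twist absorbed into the definition of $\hat\chi$), which is finite-dimensional by hypothesis~\eqref{eq:FinMultExp}. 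Since $\rho_\theta^K$ is finite-dimensional, every generic fiber of $V_\Omega^K$ is finite-dimensional.

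Third, I would upgrade this generic finite-dimensionality to global finite generation over the Noetherian algebra $\fz(\Omega)$. This is where the finite-orbit hypothesis enters: via the Geometric Lemma (or Mackey decomposition) applied to parabolic restrictions, the Jacquet module of $V$ along any parabolic $P' \supset P$ carries a finite filtration indexed by $H \backslash G / P'$ — a set which is finite by the finite-orbit hypothesis on $G/P$ and standard descent — whose subquotients are compactly induced from stabilizers $H \cap P'$. Combined with Bernstein's second adjointness, this filtration yields the coherence of the $\fz(\Omega)$-module $V_\Omega^K$. A standard Noetherian/Nakayama argument then upgrades the uniform bound on generic fibers to finite generation, completing the proof.

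The main obstacle will be the third step: establishing the coherence of $V_\Omega^K$ as a $\fz(\Omega)$-module. While passing from generic finite-dimensional fibers to global finite generation is routine for coherent sheaves on Noetherian schemes, proving coherence in this representation-theoretic setting requires combining the Geometric Lemma filtration (driven by the finite-orbit hypothesis) with the finite-multiplicity hypothesis, and controlling the behavior of $\ind_H^G\chi$ at special (non-generic) points of $\Theta_\Omega$ where the fiber dimension may jump. This is likely the technical heart of the theorem and the step that distinguishes the strengthened statement here from the incomplete version in \cite{AAG}.
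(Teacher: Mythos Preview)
Your overall architecture is close to the paper's: reduce finite generation to a statement over Bernstein components, use duality to convert the $\hat\chi$-multiplicity hypothesis into multiplicity bounds for $\ind_H^G\chi$, and invoke the Geometric Lemma together with second adjointness to handle parabolic inductions. However, your step~3 contains a genuine gap.

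The claim that the Geometric Lemma filtration ``yields the coherence of $V_\Omega^K$'' is exactly what has to be proven. That filtration reduces the problem to the same question for the \emph{descendants} $(M,(H_i,\chi_i))$, so at best you have an inductive reduction, not a proof; you still need a base case. More seriously, even granting a reduction to the cuspidal case, the assertion that ``a standard Noetherian/Nakayama argument upgrades the uniform bound on generic fibers to global finite generation'' is false without an a~priori coherence hypothesis: there are non-finitely-generated modules over $\bC[t,t^{-1}]$ all of whose closed fibers are finite-dimensional (e.g.\ $\bC(t)$ itself, whose fiber at every closed point vanishes). Nakayama goes in the wrong direction here.

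The paper handles this differently. Rather than working fiber-by-fiber, it uses Bernstein's criterion (\Cref{VKFinGen}) to reduce directly to showing that $\Hom_G(i^G_M(\Psi(\rho)),\ind_H^G\chi)$ is finitely generated over $\cO(\Psi_M)$ for each cuspidal $\rho$ of each Levi $M$. Second adjointness and the Geometric Lemma (\Cref{lem:geo}) reduce this to the cuspidal case for the descendants, and \Cref{FinMultCuspDesc} shows the descendants inherit finite cuspidal multiplicity. The cuspidal case (\Cref{thm:EqCuspFinGen}) is the technical core and uses two ingredients absent from your sketch: (i) the \emph{projectivity} of $\Psi(\rho)$, coming from Harish-Chandra's theorem that cuspidal representations are compact on $G^1$ (\Cref{CuspProj}); and (ii) a specific commutative-algebra lemma (\Cref{fg} and \Cref{CA}) showing that for a lattice $L/L'$, if $\dim\Hom_L(\rho,\psi)<\infty$ for every character $\psi$ of $L/L'$, then $\Hom_L(\rho,\bC[L/L'])$ is finitely generated over $\bC[L/L']$. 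This lemma is precisely the device that bridges ``finite multiplicity at every unramified twist'' and ``finite generation over the torus algebra'', and its proof goes via an embedding into $K(L/L')^{\bN}$ together with a transcendence-degree specialization argument --- not Nakayama.

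Finally, a small correction: the gap in \cite{AAG} referred to in \Cref{rem:AAG} concerns the bookkeeping of modular characters (repaired here by the duality trick of \Cref{cor:CuspMultType}), not the coherence issue you flag.
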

}
Let us now  give an overview of the argument.
In Lemma \ref{VKFinGen} we present a criterion, due to Bernstein, for the finite generation of spaces of $K$-invariants. The proof of the criterion uses the theory of Bernstein Center. \Dima{Using this criterion we \RamiC{introduce a notion}  of twisted pairs of finite type \RamiC{(see \Cref{def:Fin}\eqref{Fin:type})} which is equivalent to the \RamiC{local} finite generation of  $\ind_H^G(\chi)$.
Bernstein's} criterion is given in terms of all parabolic subgroups of $G$. This allows us to define an intermediate notion of finite cuspidal  type \RamiC{(see \Cref{def:Fin}\eqref{Fin:CuspType})}, which means that the criterion holds for the group $G$ as a parabolic subgroup of itself.

Then we introduce duality between twisted pairs. We prove that
\Dima{condition \eqref{eq:FinMultExp}
implies that }the dual  pair $(G,(H, \DimaA{\hat \chi}))$ is of finite cuspidal  type (see \Cref{subsec:CuspFinType}). We use a simple trick (\Cref{cor:CuspMultType}) to imply that the pair $(G,(H,\chi))$ is itself of finite cuspidal  type.

In order to analyze the condition of  Lemma \ref{VKFinGen} for all parabolic subgroups of $G$ we introduce the notion of a descendant of the pair $(G,(H,\chi))$ and prove that if all the descendants are of finite cuspidal  type then the pair itself is of finite  type (see \Cref{SecPfFinGen}).

Thus it remains to show that if the conditions of \Cref{thm:FinGen} hold for a twisted pair then they hold for all its descendants. This we do in \Cref{subsec:desc.cusp}, using a homological algebra argument.

%
%


\begin{rem}\label{rem:AAG}
The argument here is an adaptation of a similar argument in \cite{AAG} that dealt with the case of trivial $\chi$.
However, the argument in \cite{AAG} did not take into account the modular characters of various groups that appear along the way. As a result, it is not valid for non-unimodular $H$ and even for  unimodular $H$ it is not complete. The gap in the original  argument is filled \RamiC{mainly} by the proof of \Cref{cor:CuspMultType}.
\end{rem}
\subsection{Preliminaries} \lbl{subsec:prel}

%

\begin{notn}
 For a subgroup $H<G$ we denote by $\ind_H^G: \cM(H) \to \cM(G)$ the compactly supported induction functor and by $\Ind_H^G: \cM(H) \to \cM(G)$ the full  induction functor. \RamiC{For $\pi \in \cM(G)$ denote by $\widetilde{\pi}:=(\pi^*)^{\infty}$ the smooth contragredient representation. Note that for any character $\chi$ of $H$ we have $\widetilde{\ind_H^G(\chi)}=\Ind_H^G(\DimaA{\hat \chi})$.}
\end{notn}

\begin{defn}
Let $\mathbf{P}<\mathbf{G}$ be a parabolic subgroup with unipotent radical $\mathbf{U}$, and let $\mathbf{M}:=\mathbf{P}/\mathbf{U}$.
Such $\mathbf{M}$ is called a Levi subquotient of $G$.
Note that every representation of $M$ can be considered as a  representation of $P$ using the quotient morphism $P \twoheadrightarrow M$.
Define:
\begin{enumerate}
 \item The Jacquet functor $r^G_{M}:\cM(G) \to \cM(M)$ by $r^{G}_{M}(\pi):=(\EitanA{\Delta_{G/P}^{\frac{1}{2}}}\cdot\pi|_{P})_{U}$.
 Note that $r^G_{M}$ is defined for any representation of $P$.
 \item The parabolic induction functor $i^G_{M}:\cM(M) \to \cM(G)$ by $i^G_{M}(\tau):=\ind_{P}^{G}(\EitanA{\Delta_{G/P}^{-\frac{1}{2}}}\tau)$.
\end{enumerate}
Note that $i^{G}_{M}$ is right adjoint to $r^{G}_{M}$.
A representation $\pi$ of $G$ is called cuspidal if $r^{G}_{M}(\pi)=0$ for any Levi subquotient $\mathbf{M}$ of $\mathbf{G}$.
\end{defn}

It is well-known that $i^G_M$ and $r^G_M$ are exact functors.

\begin{definition}
A smooth representation $V$ of $G$ is
called \emph{compact} if for any $v \in V$ and $\xi \in
\widetilde{V}$ the matrix coefficient function defined by
$m_{v,\xi}(g):= \xi(gv)$ is a compactly supported function on $G$.
\end{definition}

\begin{theorem}[Bernstein-Zelevinsky]\lbl{CompProj}
Any compact representation of $G$ is
a projective object in the category $\cM(G)$.
\end{theorem}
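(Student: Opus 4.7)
The plan is to realize any compact $V\in\cM(G)$ as a direct summand of a direct sum of modules of the form $\Sc(G/K)$ (under the left-regular $G$-action), each of which is a projective object of $\cM(G)$, and thereby to conclude that $V$ itself is projective.

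First I would record that $\Sc(G/K)$ is projective for every compact open subgroup $K<G$: the functor represented by $\Sc(G/K)$ is the ``$K$-invariants'' functor $W\mapsto W^K$, which is exact on $\cM(G)$. An arbitrary direct sum of projectives is again projective in $\cM(G)$, since $\Hom_G(\bigoplus_i P_i,-)=\prod_i\Hom_G(P_i,-)$ and products of exact functors between vector spaces are exact. It therefore suffices to produce a $G$-equivariant split injection of $V$ into some $\bigoplus_i\Sc(G/K_i)$.

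For the embedding I would use matrix coefficients. Since $V$ is compact, for every $w\in V$ and every $\tilde v\in\tilde V$ the function $m_{w,\tilde v}(g):=\tilde v(g^{-1}w)$ lies in $\Sc(G)$, and if $\tilde v\in(\tilde V)^K$ then it is right-$K$-invariant, so $m_{w,\tilde v}\in\Sc(G/K)$. The assignment $w\mapsto m_{w,\tilde v}$ is $G$-equivariant with respect to the left-regular action. Choosing a family $\{\tilde v_i\}\subset\tilde V$, each fixed by some $K_i$ and collectively separating vectors in $V$, gives a $G$-equivariant injection $T\colon V\hookrightarrow\bigoplus_i\Sc(G/K_i)$. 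A companion family $\{v_i\}\subset V$ provides a $G$-equivariant map $S\colon\bigoplus_i\Sc(G/K_i)\to V$ by $(f_i)_i\mapsto\sum_i\pi(f_i)v_i$.

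The main obstacle is to arrange that $S\circ T=\mathrm{id}_V$. I would address this by first establishing that every compact representation is semisimple, decomposing $V=\bigoplus_\alpha\pi_\alpha$ into irreducible supercuspidal subrepresentations; this follows from the facts that a subrepresentation of a compact representation is again compact and that each nonzero compact representation admits an irreducible subrepresentation (via standard Zorn/formal-degree arguments). For a single irreducible supercuspidal $\pi_\alpha$, Schur orthogonality of matrix coefficients supplies a normalized pair $(v_\alpha,\tilde v_\alpha)$ such that
\[\int_G\tilde v_\alpha(g^{-1}w)\,\pi_\alpha(g)v_\alpha\,dg=w\qquad(w\in\pi_\alpha).\]
Assembling these data over $\alpha$ (only finitely many components are nonzero for any given $w\in V$) exhibits $V$ as a direct summand of the projective module $\bigoplus_\alpha\Sc(G/K_\alpha)$, from which projectivity follows at once by the first paragraph.
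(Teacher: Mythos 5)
The paper does not actually prove this statement---it is quoted from Bernstein--Zelevinsky without proof---so I am judging your argument on its own. Your overall strategy (exhibit $V$ as a direct summand of $\bigoplus_i\Sc(G/K_i)$ via matrix coefficients, after checking that each $\Sc(G/K)$ represents the exact functor $W\mapsto W^K$ and is therefore projective) is the right one, and your first two paragraphs, as well as the Schur-orthogonality computation for a single irreducible compact representation, are correct.

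The genuine gap is the reduction to the irreducible case. You assert that every compact representation is semisimple and that this ``follows from'' the facts that subrepresentations of compact representations are compact and that every nonzero compact representation contains an irreducible subrepresentation, via Zorn. That implication is false: a non-split extension $0\to\pi_1\to E\to\pi_2\to 0$ of irreducibles also has the property that every nonzero subrepresentation contains an irreducible one, yet $E$ is not semisimple. To run the Zorn argument you need to know that the sum of the irreducible subrepresentations is all of $V$ (equivalently, that submodules split off), and that is precisely the hard content here. Complete reducibility of compact representations is true, but it is the other half of the very Bernstein--Zelevinsky theorem you are proving, and its proof requires the same matrix-coefficient machinery (one shows, e.g., that for each compact open $K$ the algebra $\cH_K(G)$ acts on $V^K$ through a finite-dimensional semisimple quotient, using the non-degeneracy of the trace form on the span of matrix coefficients). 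As written, your proposal therefore pushes essentially all of the difficulty into an unproved assertion. A repair that avoids semisimplicity altogether is to prove directly that for every $v\in V$ there are finitely many $v_i\in V$, $\tilde v_i\in\tilde V$ with $\sum_i\int_G\tilde v_i(g^{-1}w)\,\pi(g)v_i\,dg=w$ for all $w$ in the subrepresentation generated by $v$; this ``local unit'' built from matrix coefficients is what actually yields the splitting of $T$. A secondary, more minor point: even in the irreducible case, the existence of a normalized pair $(v_\alpha,\tilde v_\alpha)$ requires knowing that the formal degree is nonzero, i.e., that the pairing $\int_G\tilde v(g^{-1}w)\,\tilde u(gv)\,dg$ is not identically zero; this is standard but should be addressed.
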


\begin{definition}$\,$
\begin{enumerate}
 \item  Denote by $G^1$ the preimage in $G$ of the maximal compact
subgroup of $G/[G,G]$.
\item Denote by $Z(G)$ the center of $G$ and denote $G_0:=G^1Z(G)$.
\item \RamiC{We call } a complex character of $G$  \emph{unramified} if it is trivial
on $G^1$. We denote the set of all unramified
characters by $\Psi_G$. Note that $G/G^1$ is a lattice and therefore we can identify $\Psi_G$ with $(\bC^{\times})^n$. This defines a structure of algebraic variety on $\Psi_G$.
\item For any smooth representation $\rho$ of $G$ we denote
$\Psi(\rho):= \ind_{G^1}^G(\rho|_{G^1})$. Note that $\Psi(\rho) \simeq  \rho \otimes \cO(\Psi_G),$
where $G$ acts only on the first factor, but this action depends on the  second factor.
This identification gives a structure of $\cO(\Psi_G)$-module on $\Psi(\rho)$.
\end{enumerate}
\end{definition}


\begin{theorem}[Harish-Chandra]\lbl{CuspComp}
Let $V$ be a cuspidal representation
of $G$. Then $V|_{G^1}$ is a compact representation of $G^1$.
\end{theorem}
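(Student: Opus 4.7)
The plan is to split the proof into two steps, following the classical Harish-Chandra strategy: first show that the matrix coefficients of $V$ are compactly supported modulo $Z(G)$, and then upgrade this to compact support on $G^1$. Fix $v\in V$ and $\xi\in\widetilde V$, both invariant under some open compact subgroup $K_0\subset G$. Choose a good maximal compact subgroup $K\supset K_0$, and use the Cartan decomposition $G=KAK$ (up to a finite union of chambers), where $A$ is a maximal split torus. Since $K/K_0$ is finite, proving compact support of $m_{v,\xi}$ modulo $Z(G)$ reduces to proving compact support on $A$ modulo $Z(G)\cap A$.

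For the asymptotics on $A$, I would use Casselman's canonical pairing (equivalently Bernstein's second adjointness): for every standard parabolic $P=MU$ with $A\subset M$ and every $a$ deep enough in the open $P$-positive chamber $A^+_P\subset A$, one has
$$\xi(av) = \delta_P^{1/2}(a)\,\bigl\langle r^G_M(\xi),\, a\cdot r^G_M(v)\bigr\rangle_{r^G_M(V)\,\times\,r^G_M(\widetilde V)}.$$
Because $V$ is cuspidal, $r^G_M(V)=0$ for every proper parabolic, so $\xi(av)=0$ whenever $a$ is sufficiently deep in any proper chamber of $A$. The complement in $A$ of the union of these ``deep'' regions is compact modulo $Z(G)\cap A$. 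Combining with the Cartan decomposition, $\mathrm{supp}(m_{v,\xi})\subset Z(G)\cdot C$ for some compact $C\subset G$.

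To pass from compact support modulo $Z(G)$ to compact support on $G^1$, I would use that $Z(G)\cap G^1$ is compact: the image of $Z(G)$ in the lattice $G/G^1\cong\bZ^n$ is itself a lattice, so its kernel $Z(G)\cap G^1$ is the maximal compact subgroup of $Z(G)$. Now any $g\in G^1$ lying in $Z(G)\cdot C$ may be written $g=zc$ with $z\in Z(G)$, $c\in C$; the image of $z$ in $G/G^1$ equals the negative of the image of $c$, which is confined to the finite image of $C$ in $G/G^1$. Hence $z$ ranges over finitely many cosets of the compact group $Z(G)\cap G^1$, and therefore lies in a compact subset of $Z(G)$; consequently $g$ lies in a compact subset of $G^1$. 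This proves $m_{v,\xi}|_{G^1}$ has compact support, which is exactly compactness of $V|_{G^1}$.

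The main obstacle is the asymptotic formula for matrix coefficients in terms of the Jacquet module pairing in the first step; everything else is essentially structural (the Cartan decomposition and the elementary fact about $Z(G)\cap G^1$). If one prefers to avoid Casselman's pairing, an alternative is to invoke Jacquet's theorem that any irreducible smooth representation of $G$ embeds into a parabolically induced representation from a cuspidal, combined with Frobenius reciprocity, to reach the same conclusion about vanishing of matrix coefficients at infinity.
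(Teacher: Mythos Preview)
The paper does not prove this theorem; it is simply attributed to Harish-Chandra and used as a black box (to derive \Cref{CuspProj}). So there is no ``paper's own proof'' to compare against.

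Your sketch is essentially the standard argument and is correct. Two minor remarks. First, you cannot always enlarge $K_0$ to a good maximal compact $K$; instead one fixes any good $K$, notes that $\pi(K)v$ and $\tilde\pi(K)\xi$ span finite-dimensional spaces by smoothness, and reduces to finitely many matrix coefficients restricted to $A^+$. Second, invoking Casselman's canonical pairing (equivalently second adjointness) is heavier than necessary: Harish-Chandra's original argument uses only Jacquet's lemma, namely that $V_U=0$ forces $\int_{U_1}\pi(u)v\,du=0$ for some compact open $U_1\subset U$, and then for $a$ deep in the appropriate chamber one has $aU_1a^{-1}\subset K_0$, whence $m_{v,\xi}(a)=m_{v,\xi}(a)\cdot\mathrm{vol}(U_1)^{-1}\int_{U_1}\xi(\pi(au)v)\,du=0$. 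Your alternative at the end (via Jacquet's subrepresentation theorem) also works but is again more than needed. The passage from compact support modulo $Z(G)$ to compact support on $G^1$ is handled correctly.
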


\begin{corollary} \lbl{CuspProj}
Let $\rho$ be a cuspidal
representation
of $G$. Then\\
(i) $\rho|_{G^1}$ is a projective object in the category
$\cM(G^1)$.\\
(ii) $\Psi(\rho)$ is a projective object in the category
$\cM(G)$.
\end{corollary}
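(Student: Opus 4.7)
The plan is to combine the two named theorems with a standard adjunction argument. For part (i), I would simply observe that Theorem \ref{CuspComp} applied to the cuspidal representation $\rho$ tells us that $\rho|_{G^1}$ is a compact representation of $G^1$, and then Theorem \ref{CompProj} applied to the $l$-group $G^1$ (rather than $G$) says that every compact representation of an $l$-group is projective in its category of smooth representations. Composing these two facts gives projectivity of $\rho|_{G^1}$ in $\cM(G^1)$.

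For part (ii), the idea is to deduce projectivity of $\Psi(\rho)=\ind_{G^1}^G(\rho|_{G^1})$ from projectivity of $\rho|_{G^1}$ via the fact that compact induction from an open subgroup preserves projectivity. More concretely, since $G^1$ is an open subgroup of $G$, the restriction functor $\mathrm{Res}_{G^1}^G:\cM(G)\to\cM(G^1)$ is exact. By Frobenius reciprocity, $\ind_{G^1}^G$ is left adjoint to $\mathrm{Res}_{G^1}^G$, and a left adjoint to an exact functor sends projectives to projectives. Therefore, applying $\ind_{G^1}^G$ to the projective object $\rho|_{G^1}\in\cM(G^1)$ from part (i) yields a projective object $\Psi(\rho)\in\cM(G)$.

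There is essentially no obstacle here beyond unpacking the definitions; the corollary is a direct consequence of the two preceding theorems. The only subtle point is checking that Theorem \ref{CompProj}, although stated for $G$, applies verbatim to the $l$-group $G^1$ (this is standard and uses only that $G^1$ is an $l$-group, which it is, being an open subgroup of the $l$-group $G$). Everything else is a formal adjunction argument.
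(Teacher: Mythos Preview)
Your proof is correct and for part (i) it is identical to the paper's one-line argument combining Theorems \ref{CuspComp} and \ref{CompProj}. For part (ii) the paper takes a slightly different but equivalent route: instead of invoking the abstract principle that a left adjoint to an exact functor preserves projectives, it writes out the isomorphism
\[
\Hom_G(\Psi(\rho),\pi)\cong \Hom_{G/G^1}\bigl(\cO(\Psi_G),\Hom_{G^1}(\rho,\pi)\bigr)
\]
and observes that the right-hand side is a composition of two exact functors in $\pi$ (the inner one by part (i), the outer one because $\cO(\Psi_G)=\bC[G/G^1]$ is free). Your adjunction argument is the cleaner packaging of the same Frobenius reciprocity, and nothing is missing.
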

\begin{proof}
(i) \RamiC{Follows from \Cref{CuspComp,CompProj}}.\\
(ii) note that $$Hom_G(\Psi(\rho),\pi) \cong
Hom_{G/G_1}(\cO(\Psi_M),Hom_{G^1}(\rho,\pi)),$$
for any representation $\pi$. Therefore the functor $\pi \mapsto
Hom_G(\Psi(\rho),\pi)$ is a composition of two exact functors and
hence is exact.
\end{proof}


We will use Bernstein's second adjointness theorem.

\begin{thm}[{\cite{BerSec} or \cite[Theorem 3]{Bus}}]\label{thm:2adj}
\RamiC{Let $\mathbf{P}\subset \mathbf{G}$ be a parabolic subgroup and let $\overline{\mathbf{P}}$ be an opposite parabolic subgroup. Let $\mathbf{M}$ be the Levi quotient of $\mathbf{P}$} and let $\overline{r}^G_{M}:\cM(G) \to \cM(M)$ denote the Jacquet functor defined using $\overline{P}$.
Then $\overline{r}^G_{M}$ is right adjoint to $i^G_{M}$. In particular, $i^G_{M}$ maps projective objects to projective ones and hence for any irreducible cuspidal
representation $\rho$
of $M$, 
$i^G_{M}(\Psi(\rho))$ is a projective object of $\cM(G)$.
\end{thm}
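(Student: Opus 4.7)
The plan is to treat the two assertions separately: the right adjointness of $\overline{r}^G_M$ to $i^G_M$ is Bernstein's second adjointness theorem, which I would simply invoke from \cite{BerSec} or \cite[Theorem 3]{Bus}, and then derive the ``in particular'' claim by pure categorical formalities combined with \Cref{CuspProj}.

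For the first assertion, the proof in the cited sources proceeds roughly as follows: using the Bruhat decomposition of $G$ with respect to $P$ and the opposite parabolic $\overline{P}$, one constructs a canonical counit map $i^G_M \circ \overline{r}^G_M \to \id_{\cM(G)}$ by integrating along the open cell $\overline{P} \cdot P / P \subset G/P$, and a unit $\id_{\cM(M)} \to \overline{r}^G_M \circ i^G_M$; verifying the triangle identities reduces to the geometric lemma describing $\overline{r}^G_M i^G_M$ in terms of double cosets, where the open cell contributes the identity while the closed cells give compact parts that vanish in the opposite-Jacquet quotient. I would not reproduce this; I would just cite it.

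For the ``in particular'' part, the key categorical fact is: a functor with an exact right adjoint preserves projectives. Indeed, if $F \dashv R$ with $R$ exact and $P$ projective, then $\Hom(F(P),-) \cong \Hom(P, R(-))$ is the composition of two exact functors, hence exact, so $F(P)$ is projective. To apply this to $F = i^G_M$ with $R = \overline{r}^G_M$, I need exactness of $\overline{r}^G_M$. This is standard: $\overline{r}^G_M$ is a twist of the functor of coinvariants with respect to the unipotent radical $\overline{U}$ of $\overline{P}$, and since $\overline{U}$ is a union of compact open subgroups of itself, the coinvariants functor $V \mapsto V_{\overline{U}}$ on smooth representations is exact (averaging over compact open subgroups gives a splitting of the surjection onto coinvariants).

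Therefore $i^G_M$ sends projective objects in $\cM(M)$ to projective objects in $\cM(G)$. Finally, for $\rho$ an irreducible cuspidal representation of $M$, \Cref{CuspProj}(ii) (applied with $M$ in place of $G$) shows that $\Psi(\rho)$ is projective in $\cM(M)$, so $i^G_M(\Psi(\rho))$ is projective in $\cM(G)$, as claimed. The only genuine obstacle in this argument is the second adjointness itself, which is the black-boxed input; everything after that is routine.
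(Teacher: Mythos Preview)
Your proposal is correct and matches the paper's treatment: the paper gives no proof at all, simply citing \cite{BerSec} and \cite[Theorem 3]{Bus} for the adjointness and leaving the ``in particular'' clause as an implicit consequence. Your explicit derivation of that clause (left adjoint with exact right adjoint preserves projectives, then invoke \Cref{CuspProj}(ii)) is exactly the intended reasoning and is spelled out more carefully than in the paper itself.
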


We now present a criterion, due to Bernstein, for \RamiC{local} finite generation. 

\begin{lemma}[{\cite[Lemma 2.1.10]{AAG}}] \lbl{VKFinGen}
Let $V\in \cM(G)$. Suppose that for any
parabolic $P<G$ and any irreducible cuspidal representation $\rho$
of $M$ (where $M$ denotes the reductive quotient
of $P$),
$ \Hom_{G}(i^G_{M}(\Psi(\rho)),V)$ is a finitely generated
module over $\cO(\Psi_M)$. Then $V^K$ is a finitely generated
module over  $\fz(\mathcal{H}_K(G))$, for any compact open
subgroup $K<G$.
\end{lemma}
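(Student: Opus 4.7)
The plan is to leverage the Bernstein decomposition together with the projective generator property of $i^G_M(\Psi(\rho))$. A first reduction shows it suffices to handle splitting compact open subgroups: given arbitrary $K$, pick a splitting $K'\subseteq K$ by \Cref{thm:Ber}\eqref{it:split}; since $\fz$ acts $G$-equivariantly, its action commutes with the projector $e_K\in\cH_{K'}(G)$, so $V^K = e_K V^{K'}$ is a $\fz$-module direct summand of $V^{K'}$. If the conclusion holds for $K'$ (so that $V^{K'}$ is finitely generated over $\fz(\cH_{K'}(G))$, hence over $\fz$ by \Cref{thm:Ber}\eqref{1}), then its $\fz$-module direct summand $V^K$ is also finitely generated over $\fz$; and since the $\fz$-action on $V^K$ factors through $\fz(\cH_K(G))$, finite generation over $\fz(\cH_K(G))$ follows.

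Assume henceforth that $K$ is splitting. By \Cref{thm:Ber}\eqref{2}, it is equivalent to show that $V^K$ is finitely generated over $\cH_K(G)$, or (via the equivalence $\cM_K(G)\simeq\cM(\cH_K(G))$ of \Cref{def:split}) that the subrepresentation $V^{(K)}\subseteq V$ is finitely generated as a $G$-module. By \Cref{thm:Ber}\eqref{3}, $\fz(\cH_K(G))$ is Noetherian, so the Bernstein decomposition $\fz=\prod_s\fz_s$ must restrict to a \emph{finite} product $\fz(\cH_K(G))=\prod_{i=1}^n \fz_{s_i}$ (an infinite product of nonzero $\bC$-algebras is never a finitely generated $\bC$-algebra); consequently only finitely many blocks $s_1,\dots,s_n$ meet $\cM_K(G)$. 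Writing $V=\bigoplus_s V_s$ for the Bernstein decomposition, we have $V^{(K)}=\bigoplus_{i=1}^n V^{(K)}_{s_i}$, and since each $V^{(K)}_{s_i}$ is a direct summand of $V_{s_i}$, it suffices to show that each $V_{s_i}$ is finitely generated as a $G$-representation.

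Fix a cuspidal datum $(M_i,\rho_i)$ representing $s_i$ and set $P_i := i^G_{M_i}(\Psi(\rho_i))$. By \Cref{thm:2adj}, $P_i$ is a projective object of $\cM(G)$; by the Bernstein--Deligne classification of blocks, $P_i$ is in fact a projective generator of $\cM_{s_i}(G)$, since every irreducible in the block appears as a subquotient of some $i^G_{M_i}(\rho_i\otimes\chi)$ with $\chi\in\Psi_{M_i}$, and such subquotients lift to quotients of $P_i$ by projectivity. The equivalence $\cM_{s_i}(G)\simeq \cM(\End_G(P_i)^{op})$ given by $W\mapsto \Hom_G(P_i,W)$ then preserves finite generation, so $V_{s_i}$ is finitely generated as a $G$-module if and only if $\Hom_G(P_i,V)=\Hom_G(P_i,V_{s_i})$ is finitely generated over $\End_G(P_i)$. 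The natural twisting action of $\cO(\Psi_{M_i})$ on $\Psi(\rho_i)$ induces, via functoriality of $i^G_{M_i}$, a ring homomorphism $\cO(\Psi_{M_i})\to \End_G(P_i)$, so the lemma's hypothesis---finite generation of $\Hom_G(P_i,V)$ over $\cO(\Psi_{M_i})$---trivially implies finite generation over the larger ring $\End_G(P_i)$. This completes the argument. The main obstacle is verifying that $P_i$ is a projective \emph{generator} of its block (not merely a projective object), which is a structural consequence of Bernstein--Deligne theory; the finite decomposition into blocks meeting $\cM_K(G)$ for splitting $K$ is the other key ingredient, obtained from the Noetherian property of $\fz(\cH_K(G))$.
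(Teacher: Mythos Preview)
The paper does not supply its own proof of this lemma; it is quoted from \cite[Lemma 2.1.10]{AAG}. Your argument is correct and is essentially the standard proof via Bernstein's theory: reduce to splitting $K$, use that $\cM_K(G)$ is a finite sum of Bernstein blocks (forced by Noetherianity of $\fz(\cH_K(G))$), and in each block invoke the projective generator $P_i=i^G_{M_i}(\Psi(\rho_i))$ together with the ring map $\cO(\Psi_{M_i})\to\End_G(P_i)$.

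Two small remarks. First, once you know that the blocks $s_1,\dots,s_n$ meeting $\cM_K(G)$ are (being indecomposable summands of $\cM(G)$) entirely contained in $\cM_K(G)$, you have $V_{s_i}^{(K)}=V_{s_i}$, so the passage to a further direct summand is unnecessary. Second, your use of the equivalence $\cM_{s_i}(G)\simeq\cM(\End_G(P_i)^{op})$ to transport finite generation implicitly requires $P_i$ to be a \emph{compact} projective generator, equivalently that $P_i$ be finitely generated as a $G$-module; this holds because $\Psi(\rho_i)$ is cyclic over $M_i$ (generated by any function supported on $M_i^1$ with value a cyclic vector of $\rho_i$) and parabolic induction from a parabolic with compact quotient preserves finite generation. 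With that in hand, finite generation of $\Hom_G(P_i,V_{s_i})$ over $\End_G(P_i)$ yields a surjection $P_i^n\twoheadrightarrow V_{s_i}$, and the conclusion follows.
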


The theory of Bernstein center gives us the following Lemma:

\begin{lem}\label{lem:Noeth}
Let $V$ be a smooth finitely generated representation of $G$. Let $W \subset V$ be a subrepresentation.
Then $W$ is finitely generated.
\end{lem}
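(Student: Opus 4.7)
The plan is to reduce the lemma to a Noetherianity statement by invoking the Bernstein decomposition. First, since $V$ is finitely generated and its generators are smooth vectors, I will pick a compact open subgroup $K_0$ that fixes each generator, so that $V=V^{(K_0)}$. By Theorem~\ref{thm:Ber}\eqref{it:split}, splitting open compact subgroups form a basis at the identity, so I can shrink $K_0$ to a splitting subgroup $K\subset K_0$; because $V^{K_0}\subset V^{K}$, we still have $V=V^{(K)}$, i.e.\ $V\in\cM_K(G)$.

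Next, I would argue that $W$ also belongs to $\cM_K(G)$. This is where the splitting hypothesis does real work: the decomposition $\cM(G)=\cM_K(G)\oplus\cM_K(G)^{\bot}$ implies that $\Hom_G(X,Y)=0$ whenever $X\in\cM_K(G)^{\bot}$ and $Y\in\cM_K(G)$. Writing $W=W_1\oplus W_2$ with $W_1\in\cM_K(G)$ and $W_2\in\cM_K(G)^{\bot}$, the inclusion $W\hookrightarrow V$ must send $W_2$ to zero, forcing $W_2=0$. Consequently $W\in\cM_K(G)$, and the equivalence of categories $\mathcal{P}_K\colon\cM_K(G)\to\cM(\cH_K(G))$ from Definition~\ref{def:split} translates the problem to showing that the $\cH_K(G)$-submodule $W^K$ of the finitely generated $\cH_K(G)$-module $V^K$ is itself finitely generated.

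Finally, I would invoke a classical Noetherianity argument. By Theorem~\ref{thm:Ber}\eqref{3}, $\fz(\cH_K(G))$ is a finitely generated commutative $\bC$-algebra, hence Noetherian by the Hilbert basis theorem; by Theorem~\ref{thm:Ber}\eqref{2} the algebra $\cH_K(G)$ is finitely generated as a module over $\fz(\cH_K(G))$. Therefore every finitely generated $\cH_K(G)$-module is a Noetherian $\fz(\cH_K(G))$-module, so every $\cH_K(G)$-submodule of it is finitely generated over $\fz(\cH_K(G))$, and a fortiori over $\cH_K(G)$. Applied to $W^K\subset V^K$, this would complete the proof. The main subtlety, and the one point that might otherwise be overlooked, is the closure of $\cM_K(G)$ under subobjects, which hinges on $K$ being a splitting subgroup rather than merely a compact open subgroup.
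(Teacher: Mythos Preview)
Your proof is correct and follows essentially the same route as the paper's: choose a splitting subgroup $K$ fixing the generators so that $V\in\cM_K(G)$, deduce $W\in\cM_K(G)$, and then use that $\cH_K(G)$ is module-finite over its Noetherian center $\fz(\cH_K(G))$ to conclude that $W^K$ is finitely generated. The only difference is that you spell out more carefully why $\cM_K(G)$ is closed under subobjects, which the paper leaves implicit.
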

\begin{proof}
Let $v_1, \dots, v_n$ be the generators of $V$. By \Cref{thm:Ber}\eqref{it:split} we can choose a splitting subgroup $K \subset G$ s.t. $v_i \in V^K$. Then $V\in \cM_K(G)$ and $V^K$ is finitely generated   over $\cH_K(G)$. Hence  $W\in \cM_K(G)$ and thus it is enough to show that $W^K$ is finitely  generated over $\cH_K(G)$.
By \Cref{thm:Ber}\eqref{2} $\cH_K(G)$ is finite over its center $\fz(\mathcal{H}_K(G))$. So  $V^K$ is  finitely generated   over $Z(\cH_K(G))$. From \Cref{thm:Ber}\eqref{3} it follows that  $\fz(\cH_K(G))$ is Notherian, and thus $W^K$ is finitely generated  generated over $\fz(\cH_K(G))$.
\end{proof}

\subsection{Finite multiplicity and duality of twisted pairs}\label{subsec:FinDual}
Let $(G,(H,\chi))$ be a twisted pair.
\begin{defn}\label{def:Fin}
 We say that the pair  $(G,(H,\chi))$ \begin{enumerate}
\item \label{Fin:mult} has \emph{finite multiplicities} \RamiC{(resp. \emph{finite cuspidal multiplicities})} if for any irreducible \RamiC{(resp. cuspidal irreducible)} representation $\pi$ of $G$, $$\dim\Hom(\ind_{H}^G(\chi),\pi)<\infty.$$
\item \label{Fin:type} has \emph{finite type} if  for any
parabolic \RamiC{$\mathbf{P}<\mathbf{G}$} and any irreducible cuspidal representation $\rho$
of $M$ (where $\mathbf{M}$ denotes the reductive quotient of $\mathbf{P}$),
$$ \Hom_{G}(i^G_{M}(\Psi(\rho)),\ind_{H}^G(\chi))$$ is a finitely generated
module over $\cO(\Psi_M)$.

\item \label{Fin:CuspType} has \emph{finite cuspidal  type} if  for any irreducible cuspidal representation $\rho$
of $G$,
$ \Hom_{G}(\Psi(\rho),\ind_{H}^G(\chi))$ is a finitely generated
module over $\cO(\Psi_M)$.
\item is $F$-\emph{spherical}\footnote{
If $char F=0$ and $G$ is quasi-split over $F$ then $(G,H)$ is an $F$-spherical pair if and only if
it is a spherical pair of algebraic groups.
} if for any
parabolic subgroup $\mathbf{P} \subset \mathbf{G}$, there is a finite number of
double cosets in $P \setminus G / H$.

\end{enumerate}

\end{defn}


The following lemma helps to connect multiplicities to duality.

\begin{lem}\label{lem:tilde}
Let $\pi,\rho \in \cM(G)$ and assume that $\rho$ has \DimaB{finite length}. Then the natural morphism $\Hom(\pi,\rho) \to \Hom(\widetilde{\rho},\widetilde{\pi}) $ is an isomorphism.
\end{lem}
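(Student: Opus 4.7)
The plan is to leverage the admissibility of finite length smooth representations of the reductive $p$-adic group $G$, which makes the canonical biduality map $\iota_\rho\colon\rho\to\widetilde{\widetilde{\rho}}$ an isomorphism. The map in question sends $\phi\in\Hom_G(\pi,\rho)$ to $\widetilde{\phi}\in\Hom_G(\widetilde{\rho},\widetilde{\pi})$ defined by $\widetilde{\phi}(l):=l\circ\phi$.

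For injectivity, I would observe that this step does not require the finite length hypothesis, since $\widetilde{\rho}$ separates the points of $\rho$ for any smooth $\rho$. Indeed, given $0\neq w\in\rho$, pick a compact open $K$ with $w\in\rho^K$, any linear functional on the finite-dimensional vector space $\rho^K$ nonzero on $w$, and extend it to a $K$-invariant (hence smooth) functional on $\rho$ by pre-composing with the averaging projector $\rho\twoheadrightarrow\rho^K$. Consequently, if $\widetilde{\phi}=0$, then $l(\phi(v))=0$ for every $l\in\widetilde{\rho}$ and $v\in\pi$, forcing $\phi=0$.

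For surjectivity, given $\psi\in\Hom_G(\widetilde{\rho},\widetilde{\pi})$, I would construct a preimage by first defining a map $\Theta\colon\pi\to\widetilde{\widetilde{\rho}}$ by the formula
\[
\Theta(v)(l):=\psi(l)(v),\qquad v\in\pi,\ l\in\widetilde{\rho}.
\]
The principal check is that $\Theta(v)$ is a \emph{smooth} functional on $\widetilde{\rho}$, rather than merely an element of the algebraic dual: if $K$ fixes $v$, then the $G$-equivariance of $\psi$ gives
\[
\Theta(v)(k\cdot l)=\psi(k\cdot l)(v)=(k\cdot\psi(l))(v)=\psi(l)(k^{-1}v)=\Theta(v)(l),
\]
so $\Theta(v)\in\widetilde{\widetilde{\rho}}^K$. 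A parallel computation confirms that $\Theta$ is itself $G$-equivariant.

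The finite length hypothesis enters at exactly one point: admissibility of $\rho$ (which follows because irreducible smooth representations of $G$ are admissible and admissibility is preserved under extensions, the functor of $K$-invariants being exact) allows one to invert $\iota_\rho$. Setting $\phi:=\iota_\rho^{-1}\circ\Theta\in\Hom_G(\pi,\rho)$ and unwinding definitions yields $l(\phi(v))=\iota_\rho(\phi(v))(l)=\Theta(v)(l)=\psi(l)(v)$, i.e.\ $\widetilde{\phi}=\psi$. No genuine obstacle arises; the substance of the lemma is the single statement that finite length smooth representations are reflexive, after which everything is formal.
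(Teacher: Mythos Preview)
Your proof is correct and takes a genuinely different route from the paper. The paper chooses a splitting subgroup $K$ with $\rho\in\cM_K(G)$, reduces both $\Hom(\pi,\rho)$ and $\Hom(\widetilde{\rho},\widetilde{\pi})$ to $\cH_K(G)$-module maps on $K$-invariants, and then identifies each side with the same subspace of $(\pi^K)^*\otimes\rho^K$ using that $\rho^K$ is finite-dimensional. Your argument is more conceptual: you isolate the single fact that finite length implies admissibility implies reflexivity ($\rho\cong\widetilde{\widetilde{\rho}}$), and then construct an explicit inverse $\psi\mapsto\iota_\rho^{-1}\circ\Theta$ by hand. Your approach is more elementary and portable---it does not invoke the Bernstein decomposition at all---while the paper's approach stays within the Hecke-module formalism already set up in \S\ref{subsec:Ber} and used throughout. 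Both ultimately rest on the same input (admissibility of $\rho$), just packaged differently.
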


\begin{proof}
By \Cref{thm:Ber} we can choose a splitting subgroup $K$ such that $\rho^K$ generates $\rho$. Then
$$
\Hom(\pi,\rho) \cong \Hom_{\cH_K(G)}(\pi^K,\rho^K)\cong \{v \in (\pi^K)^*\otimes \rho^K \, \vert \, \forall a \in \cH_K(G), \, (a\otimes 1 - 1 \otimes a)v = 0\}.$$
Here we consider the standard action of $\cH_K(G)^{opposite}\otimes \cH_K(G)$ on acts on $(\pi^K)^*\otimes \rho^K$. Also
\begin{multline*}\Hom(\widetilde{\rho},\widetilde{\pi})\cong\Hom_{\cH_K}((\rho^{*})^{K},(\pi^*)^K) \cong  \Hom_{\cH_K}((\rho^K)^*,(\pi^{K})^*)\cong\rho^K \otimes (\pi^*)^K \cong \\ \cong\{v \in \rho^K \otimes (\pi^K)^*  \, \vert \, \forall a \in \cH_K(G), \, (a\otimes 1 - 1 \otimes a)v = 0\}
\end{multline*}
This easily implies the assertion.
\end{proof}

Using Frobenius reciprocity we obtain the following corollary.

\begin{cor}\label{lem:DualFinMult}
Let $\pi$ be a \DimaB{smooth} representation of $G$ of \DimaB{finite length}. Then
$$\dim\Hom_G(\ind_{H}^G(\chi),\pi)=\dim\Hom_H(\tilde \pi, \DimaA{\hat \chi})$$
\end{cor}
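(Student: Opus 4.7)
The plan is to combine \Cref{lem:tilde} with the two standard forms of Frobenius reciprocity, exploiting the fact already recorded in the paper that $\widetilde{\ind_H^G(\chi)} = \Ind_H^G(\hat\chi)$.

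First, I would apply \Cref{lem:tilde} with the roles $\pi \leftrightarrow \ind_H^G(\chi)$ and $\rho \leftrightarrow \pi$. This is permitted because our $\pi$ has finite length (note that $\ind_H^G(\chi)$ itself generally does not, but \Cref{lem:tilde} only requires finite length for the second argument). The outcome is the natural isomorphism
$$\Hom_G\bigl(\ind_H^G(\chi),\pi\bigr) \;\cong\; \Hom_G\bigl(\tilde\pi,\widetilde{\ind_H^G(\chi)}\bigr).$$

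Next, I would substitute $\widetilde{\ind_H^G(\chi)} = \Ind_H^G(\hat\chi)$ into the right-hand side. Finally, Frobenius reciprocity for the full smooth induction (the right adjoint of restriction) gives
$$\Hom_G\bigl(\tilde\pi,\Ind_H^G(\hat\chi)\bigr) \;\cong\; \Hom_H(\tilde\pi|_H,\hat\chi).$$
Comparing dimensions at the two ends of the resulting chain of isomorphisms yields the claim. There is essentially no obstacle: the whole content is that \Cref{lem:tilde} trades the compact induction on the source for a full induction on the target, after which one uses the appropriate Frobenius adjunction. The only care required is to apply \Cref{lem:tilde} in the correct direction, since compact induction does not admit a Frobenius reciprocity in the $\Hom_G(-,\pi)$ slot without modifications, whereas full induction does.
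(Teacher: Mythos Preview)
Your proof is correct and is precisely the argument the paper has in mind: the one-line justification ``Using Frobenius reciprocity'' following \Cref{lem:tilde} unpacks exactly as you describe, via $\widetilde{\ind_H^G(\chi)}=\Ind_H^G(\hat\chi)$ and the adjunction for full induction. Your remark about applying \Cref{lem:tilde} in the correct direction (so that the finite-length hypothesis falls on $\pi$ rather than on $\ind_H^G(\chi)$) is the only point requiring care, and you handle it correctly.
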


\RamiA{\Cref{lem:admFin} follows from \Cref{lem:DualFinMult} and the next lemma.

\begin{lem}\label{lem:FinTypeFinMult}
If $(G,(H,\chi))$ has  \EitanA{finite (cuspidal)} type then it has  \EitanA{finite (cuspidal)} multiplicities.
\end{lem}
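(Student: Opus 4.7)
The plan is to reduce $\dim \Hom_G(\ind_H^G(\chi), \pi)$ to a computation inside the single Bernstein component of $\pi$, using a projective progenerator attached to the cuspidal support of $\pi$. I will write the argument for the finite type version; the finite cuspidal type version is the special case $M = G$ and $\rho = \pi$, which uses only the finite cuspidal type hypothesis applied to this $\rho$.

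Let $\pi$ be an irreducible smooth representation of $G$, and let $(M, \rho)$ be its cuspidal support, so $\pi$ lies in the Bernstein component $\Omega := [M,\rho]$. Set $P := i_M^G(\Psi(\rho))$. By \Cref{CuspProj} and \Cref{thm:2adj} (Bernstein's second adjointness), $P$ is a projective object of $\cM(G)$ lying in $\cM(G)_\Omega$, and the unramified twists make $\cO(\Psi_M)$ act on $P$ through central endomorphisms. By the standard Bernstein--Deligne theory of cuspidal data, $P$ is a compact projective generator of $\cM(G)_\Omega$, the functor $\Hom_G(P, -)$ gives an equivalence $\cM(G)_\Omega \simeq \mathrm{Mod}(A^{\mathrm{op}})$ with $A := \End_G(P)$, and $A$ is finitely generated as an $\cO(\Psi_M)$-module.

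Next, I would invoke the finite type hypothesis: $M_P := \Hom_G(P, \ind_H^G(\chi))$ is finitely generated over $\cO(\Psi_M)$, hence over $A$. Transporting this finiteness across the equivalence, the component $e_\Omega \ind_H^G(\chi)$ is finitely generated as a $G$-representation, so there exists a surjection $P^{\oplus n} \twoheadrightarrow e_\Omega \ind_H^G(\chi)$ for some $n$. Since $\pi \in \cM(G)_\Omega$, every $G$-morphism $\ind_H^G(\chi) \to \pi$ factors through its $\Omega$-component, giving
\[
\Hom_G(\ind_H^G(\chi), \pi) = \Hom_G(e_\Omega \ind_H^G(\chi), \pi) \hookrightarrow \Hom_G(P^{\oplus n}, \pi) \cong \Hom_G(P, \pi)^{\oplus n}.
\]

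It remains to check that $\Hom_G(P, \pi)$ is finite-dimensional. By second adjointness together with Frobenius reciprocity for $\ind_{M^1}^M$,
\[
\Hom_G(P, \pi) \cong \Hom_M(\Psi(\rho), \overline{r}_M^G(\pi)) \cong \Hom_{M^1}(\rho|_{M^1}, \overline{r}_M^G(\pi)|_{M^1}).
\]
The Jacquet module $\overline{r}_M^G(\pi)$ is admissible of finite length, so its restriction to $M^1$ is an admissible smooth $M^1$-representation, while $\rho|_{M^1}$ decomposes as a finite sum of irreducibles; hence the right-hand Hom space is finite-dimensional. The main technical subtlety is the invocation of Bernstein--Deligne in the second paragraph, both for the progenerator property of $P$ and for the module-finiteness of $A$ over $\cO(\Psi_M)$; once these standard inputs are granted, the rest of the argument is purely formal.
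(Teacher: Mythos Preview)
Your argument is correct, but the paper takes a much shorter route. The paper simply chooses a splitting compact open subgroup $K$ with $\pi^K \neq 0$, uses the equivalence $\cM_K(G)\simeq \cM(\cH_K(G))$ to rewrite $\Hom_G(\ind_H^G(\chi),\pi)=\Hom_{\cH_K(G)}((\ind_H^G(\chi))^K,\pi^K)$, and then observes that $(\ind_H^G(\chi))^K$ is finitely generated over $\cH_K(G)$ (this is exactly what \Cref{VKFinGen} yields from the finite type hypothesis) while $\pi^K$ is finite-dimensional; a map from a finitely generated module to a finite-dimensional one is determined by finitely many vectors, and the result follows in one line.

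By contrast, you bypass \Cref{VKFinGen} entirely and work directly inside a single Bernstein component, using $P=i_M^G(\Psi(\rho))$ as a compact projective generator and the finite type hypothesis in its raw form. This is heavier machinery (you invoke the full Bernstein--Deligne description of $\cM(G)_\Omega$ as modules over $\End_G(P)$, plus finiteness of that endomorphism ring over $\cO(\Psi_M)$), but it has the virtue of making the cuspidal case completely transparent: when $\pi$ is cuspidal you take $M=G$ and $\rho=\pi$, and only the finite \emph{cuspidal} type hypothesis is used. The paper's proof, as written, appeals to finite generation of $(\ind_H^G(\chi))^K$, which \Cref{VKFinGen} derives from the full finite type condition rather than the cuspidal one; your argument separates the two cases more cleanly. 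One small remark: your final step, that $\rho|_{M^1}$ is a finite direct sum of irreducibles and that $\overline{r}_M^G(\pi)|_{M^1}$ has finite length, is true but deserves a word of justification (it follows from finiteness of $M/M^1Z(M)$ and Clifford theory); alternatively you can avoid it by noting that $\Hom_G(P,\pi)$ is a simple module over $A=\End_G(P)$ and hence finite-dimensional since $A$ is finite over its finitely generated commutative center.
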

\begin{proof}
 Let $\pi$ be a irreducible (cuspidal) representation of $G$. By \Cref{thm:Ber} we can choose a splitting compact open subgroup $K<G$ s.t. $\pi^K \neq 0$. Then $$\Hom_{G}(\ind_H^G(\DimaA{\hat \chi}),\pi) = \Hom_{\cH_K(G)}((\ind_H^G(\DimaA{\hat \chi}))^K,\pi^K).$$ Since $(\ind_H^G(\DimaA{\hat \chi}))^K$ is finitely generated, this implies that $\dim \Hom_{G}(\ind_H^G(\DimaA{\hat \chi}),\pi)<\infty$.
\end{proof}
}

%

In view of \Cref{VKFinGen,lem:DualFinMult}, \Dima{\Cref{thm:ExpFinGen}} is equivalent to  the following one.

\begin{thm}\label{thm:EqFinGen}
If $(G,(H,\chi))$ is $F$-spherical and has finite multiplicity then it has finite type.
\end{thm}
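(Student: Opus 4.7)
\medskip

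The plan is to follow the three-step outline sketched in the introduction: reduce finite type at parabolic $\mathbf P$ to finite cuspidal type of finitely many ``descendant pairs'' on the Levi $\mathbf M$; show that $F$-sphericity and finite multiplicity descend to those descendants; and prove that for any pair, finite multiplicity implies finite cuspidal type. For the first step, fix a parabolic $\mathbf P$ with Levi quotient $\mathbf M$ and opposite parabolic $\bar{\mathbf P}$, and let $\rho$ be an irreducible cuspidal representation of $M$. By Bernstein's second adjointness (\Cref{thm:2adj}) we have
\[
\Hom_{G}\bigl(i^{G}_{M}\Psi(\rho),\ind_{H}^{G}\chi\bigr)\;\cong\;\Hom_{M}\bigl(\Psi(\rho),\bar r^{G}_{M}(\ind_{H}^{G}\chi)\bigr).
\]
$F$-sphericity gives a finite filtration of $\ind_{H}^{G}\chi$ by $\bar P$-subrepresentations indexed by the orbits of $H\backslash G/\bar P$, whose subquotients are compactly induced characters $\ind_{\bar P\cap g^{-1}Hg}^{\bar P}(\chi_{g})$. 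Since $\bar r^{G}_{M}$ is exact, after applying it and absorbing the modular characters, we get an analogous finite filtration of $\bar r^{G}_{M}(\ind_{H}^{G}\chi)$ with subquotients $\ind_{M\cap g^{-1}Hg}^{M}(\tilde\chi_{g})$. These subgroups $M\cap g^{-1}Hg$ with twisted character $\tilde\chi_{g}$ are the descendant pairs. Since $\Psi(\rho)$ is projective (\Cref{thm:2adj} / \Cref{CuspProj}), $\Hom_{M}(\Psi(\rho),-)$ is exact, so we obtain a finite filtration of the $\cO(\Psi_{M})$-module in question whose subquotients are precisely the modules appearing in the finite cuspidal type condition for the descendant pairs on $M$. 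Thus the result reduces to (i) descent of the hypotheses, and (ii) the purely cuspidal case.

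For the descent of hypotheses, I would verify that each descendant $(M,(M\cap g^{-1}Hg,\tilde\chi_{g}))$ is $F$-spherical and has finite multiplicity. $F$-sphericity is straightforward: a parabolic $\mathbf Q\subset\mathbf M$ lifts to a parabolic $\mathbf Q\mathbf U_{\mathbf P}\subset\mathbf G$, and the orbits of $M\cap g^{-1}Hg$ on $M/Q$ are in bijection with the $H$-orbits on the fiber of $G/QU_{P}\twoheadrightarrow G/P$ above $gP$, hence finite. For finite multiplicity of descendants, given an irreducible $\sigma$ of $M$, the representation $i^{G}_{M}\sigma$ has finite length, so the hypothesis of finite multiplicity on $(G,(H,\chi))$ gives $\dim\Hom_{G}(\ind_{H}^{G}\chi, i^{G}_{M}\sigma)<\infty$; second adjointness rewrites this as $\dim\Hom_{M}(\bar r^{G}_{M}\ind_{H}^{G}\chi,\sigma)<\infty$, and the geometric lemma filtration above decomposes this into contributions from each descendant. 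By choosing $\sigma$ appropriately (and using that $\sigma$ can be twisted by unramified characters of $M$ while the filtration is preserved), one isolates each descendant's contribution and concludes that each descendant has finite multiplicity.

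For the cuspidal case, we must show that if $(G',(H',\chi'))$ has finite multiplicities then, for each irreducible cuspidal $\rho$ of $G'$, the module
\[
\Hom_{G'}\bigl(\Psi(\rho),\ind_{H'}^{G'}\chi'\bigr)
\]
is finitely generated over $\cO(\Psi_{G'})$. Here I would invoke the simple duality trick: by \Cref{lem:tilde} and Frobenius reciprocity, maps out of $\Psi(\rho)=\ind_{(G')^{1}}^{G'}\rho|_{(G')^{1}}$ into $\ind_{H'}^{G'}\chi'$ can be re-expressed in terms of the dual pair $(G',(H',\hat\chi'))$ and the restriction $\rho|_{(G')^{1}}$; by Harish-Chandra (\Cref{CuspComp}) the latter is a compact, and hence projective, $(G')^{1}$-module, so the $\Hom$ functor involved is exact. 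Combining this exactness with the fact that $\End_{G'}(\Psi(\rho))=\cO(\Psi_{G'})$ is Noetherian (\Cref{thm:Ber}\eqref{3}) and that every closed-point fiber of the module in question equals $\Hom_{G'}(\rho\otimes\psi,\ind_{H'}^{G'}\chi')$, which is finite-dimensional by finite multiplicity, one obtains finite generation; the relevant Noetherian bookkeeping goes via \Cref{lem:Noeth}.

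The main obstacle is the last step: passing from pointwise/fiberwise finite-dimensionality to global finite generation of the $\cO(\Psi_{G'})$-module without losing track of the modular characters. Indeed, the paper explicitly remarks (\Cref{rem:AAG}) that the previous version of this argument in \cite{AAG} broke down precisely because the modular characters of the non-unimodular subgroups involved in the geometric lemma were not properly tracked. The clean way to handle this is to work with the duality $\chi\leftrightarrow\hat\chi=\Delta_{G/H}\chi^{-1}$ systematically so that the modular twists introduced by the Mackey filtration on one side match the modular twists in the definition of the Jacquet functor on the other, and to prove the cuspidal case simultaneously for a pair and its dual. Once this bookkeeping is in place, the three steps combine to give \Cref{thm:EqFinGen}.
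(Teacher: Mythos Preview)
Your overall architecture matches the paper's: reduce to descendants via second adjointness and the geometric lemma, descend the hypotheses, and then prove the purely cuspidal statement. However, two of your three steps have genuine gaps that are not just missing bookkeeping.

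\textbf{Descent of multiplicity.} From $\dim\Hom_M(\bar r^G_M(\ind_H^G\chi),\sigma)<\infty$ and a finite filtration with subquotients $\ind_{H_i}^M\chi_i$, you cannot conclude $\dim\Hom_M(\ind_{H_i}^M\chi_i,\sigma)<\infty$ for each $i$: the long exact sequence only gives this once you control $\Ext^1_M(\ind_{H_i}^M\chi_i,\sigma)$. Your ``twist $\sigma$ by unramified characters to isolate each piece'' does not work---the subquotients have no reason to respond differently to such twists. The paper handles this by restricting to cuspidal $\sigma$ (which is all that is needed downstream) and proving the required $\Ext^1$ bound directly (\Cref{FinDimH1H0}): for cuspidal $\rho$ one has $\Ext^i(\ind_{H'}^M\chi',\rho)\cong\Ext^i_{H'}(\widetilde\rho,\hat\chi')$, and projectivity of $\widetilde\rho|_{M^1}$ (\Cref{CuspProj}) reduces this to group homology of the lattice $H'/(H'\cap M^1)$, which is controlled by $\oH_0$ (\Cref{AFG}). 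Then an inductive long-exact-sequence argument (\Cref{LinAlg}) gives finite cuspidal multiplicity of each descendant.

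\textbf{The cuspidal step.} Fiberwise finite-dimensionality of $\Hom_{G'}(\Psi(\rho),\ind_{H'}^{G'}\chi')$ over $\Spec\cO(\Psi_{G'})$ does \emph{not} imply finite generation, and \Cref{lem:Noeth} only helps once you already have an embedding into a finitely generated object. The paper's substantive content here is \Cref{fg} (together with \Cref{CA}): one rewrites the module as $\Hom_{L'}(\rho,\bC[L/L'])$ for a lattice quotient $L/L'$, observes that it sits inside the $\bC(L/L')$-vector space $\Hom_L(\rho,\bC(L/L'))$ intersected with $\bC[L/L']^{\bN}$, and then shows this vector space is finite-dimensional by specializing the defining linear equations along a transcendence-basis embedding $\bC(L/L')\hookrightarrow\bC$, which converts the question back into finite multiplicity at a single closed point. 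This is the step that actually passes from pointwise to global, and it is absent from your sketch. Your final paragraph correctly identifies that the duality $\chi\leftrightarrow\hat\chi$ must be invoked twice (\Cref{cor:CuspMultType}) to close the loop, but that trick sits on top of \Cref{fg}, it does not replace it.
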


\subsection{Descent Of
Finite Multiplicity} \lbl{subsec:desc.cusp}

%
%
%
%

\begin{notation}
Let $(G,(H,\chi))$ be a twisted pair.
 Let \RamiC{$\mathbf{P}<\mathbf{G}$ be
a parabolic subgroup and $\mathbf{M}$ be its Levi quotient. Let $\mathbf{\overline{P}}<\mathbf{G}$ be
a parabolic subgroup opposite to $\mathbf{P}$ and $\mathbf{\overline{U}}$ be its unipotent radical.} Let $X:=G/H$.
\Rami{Let $\cF$ be the natural $G$-equivariant sheaf on $X$ such that the stalk at $[1]$ coincides with $\chi$ as a representation of $H$.}

Let $x \in X$. It is easy to see that there exists a geometric quotient $A^{x} = \overline{U}\backslash\overline{P}x$. Denote by $\cF^{x}$  the natural $M$-equivariant sheaf on $A^{x}$ such that $\overline{r}_M^G(\Sc(\overline{P}x, \cF|_{\overline{P}x}))=\Sc(A^{x},\cF^{x})$.
Suppose that $\cF^{x}\neq 0$. Let $y$ be the image of $x$ in $A^{x}$. We denote its stabilizer in $M$ by $H^x_M$, and we consider the fiber $(\cF^{x})_y$ as a character of $H^x_M$, and denote it by $\chi_M^x$.

We say that the twisted pair $(M, (H_M^x,\chi_M^x))$ is a \emph{$P$-descendent} of the twisted pair $(G,(H,\chi))$.  We will say that descendants $(M, (H_M^x,\chi_M^x))$ and $(M, (H_M^{x'},\chi_M^{x'}))$ are equivalent if $x$ and $x'$ belong to the same $P$-orbit.
\end{notation}

The following \RamiC{version of the Bernstein-Zelevinsky geometric} lemma follows from the exactness of $\overline{r}_M^G$.

\begin{lem}\label{lem:geo}
\RamiC{Let $\mathbf{P}<\mathbf{G}$ be
a parabolic subgroup and $\mathbf{M}$ be its Levi quotient.} Let $(G,(H,\chi))$ be an $F$-spherical twisted pair. Then $ \ind_H^G(\chi)$ has a finite filtration such that $$Gr(\overline{r}_M^G( \ind_H^G(\chi)))\simeq\bigoplus_{i} \ind_{H_i}^G(\chi_i), $$
where $(M,(H_i,\chi_i))$ ranges over all the $P$-descendants of $(G,(H,\chi))$ up to equivalence.
\end{lem}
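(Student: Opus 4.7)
The plan is to produce the filtration geometrically, by stratifying $X = G/H$ into $\overline{P}$-orbits, and then to apply $\overline{r}_M^G$ termwise. Note first that $\ind_H^G(\chi) \cong \Sc(X,\cF)$ as $G$-modules, where $\cF$ is the equivariant sheaf associated to $\chi$. The $F$-sphericality hypothesis guarantees that $X$ has only finitely many $\overline{P}$-orbits $O_1,\dots,O_n$. Order them so that every union $U_k := O_1\cup\cdots\cup O_k$ is open in $X$ (equivalently, so $O_k$ is closed in $X\setminus U_{k-1}$); this is possible since there are only finitely many orbits in a quasi-compact situation.

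Next, I would use the standard exact sequence for an open-closed decomposition of an $l$-space: for a closed $\overline{P}$-invariant subspace $Z\subset X$ with open complement $V$, extension-by-zero and restriction give a short exact sequence of $\overline{P}$-modules
\begin{equation*}
0 \to \Sc(V,\cF|_V) \to \Sc(X,\cF) \to \Sc(Z,\cF|_Z) \to 0.
\end{equation*}
Iterating along the filtration $U_0\subset U_1\subset\cdots\subset U_n = X$ yields a finite filtration of $\Sc(X,\cF)$ whose associated graded is $\bigoplus_k \Sc(O_k,\cF|_{O_k})$ as a $\overline{P}$-module.

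Now I would apply the Jacquet functor $\overline{r}_M^G$. Since $\overline{r}_M^G$ is exact and can be applied to any $\overline{P}$-module (via the projection $\overline{P}\onto M$), it transforms the filtration above into a finite filtration of $\overline{r}_M^G(\ind_H^G(\chi))$ whose graded pieces are $\overline{r}_M^G(\Sc(\overline{P}x_k,\cF|_{\overline{P}x_k}))$, where $x_k\in O_k$ is any representative. By the very definition of $A^{x_k}$ and $\cF^{x_k}$ one has
\begin{equation*}
\overline{r}_M^G(\Sc(\overline{P}x_k,\cF|_{\overline{P}x_k})) \;=\; \Sc(A^{x_k},\cF^{x_k}),
\end{equation*}
which is nonzero precisely when $\cF^{x_k}\neq 0$. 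For those $k$ with $\cF^{x_k}\neq 0$, the homogeneous space $A^{x_k}$ is an $M$-orbit with stabilizer $H^{x_k}_M$ and $\cF^{x_k}$ is the equivariant sheaf attached to the character $\chi^{x_k}_M$; hence $\Sc(A^{x_k},\cF^{x_k}) \cong \ind_{H_M^{x_k}}^M(\chi_M^{x_k})$. Summing over the (finitely many, by $F$-sphericality) equivalence classes of $P$-descendants gives the claimed description of the graded module.

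The main technical point I expect to require care is the two bookkeeping issues: first, ensuring that the ordering of $\overline{P}$-orbits yielding the open-closed filtration actually exists (this is where finiteness of orbits is used essentially, allowing one to peel off a closed orbit of minimal dimension at each stage), and second, verifying that the half-density twists $\Delta_{G/P}^{\pm 1/2}$ built into $\overline{r}_M^G$ and into the definition of the descendant character $\chi_M^{x_k}$ are compatible, so that the fiber of $\cF^{x_k}$ at $[x_k]\in A^{x_k}$ is indeed the character $\chi_M^{x_k}$ as defined above, with no extra modular twist creeping in. Both are essentially formal once set up correctly.
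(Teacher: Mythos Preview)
Your proposal is correct and is exactly the argument the paper has in mind: the paper simply records that the lemma ``follows from the exactness of $\overline{r}_M^G$'' and calls it a version of the Bernstein--Zelevinsky geometric lemma, leaving the orbit filtration and the identification of the graded pieces implicit. You have spelled out precisely that standard argument, including the one subtlety the paper sweeps under the rug (the modular-character bookkeeping in the definition of $\chi_M^{x_k}$); note also that the $\ind_{H_i}^G$ in the displayed statement is a typo for $\ind_{H_i}^M$, as your write-up correctly reflects.
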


The goal of this subsection is to prove the following lemma.

\begin{lemma} \lbl{FinMultCuspDesc}
Let $(G,(H,\chi))$ be an $F$-spherical pair of finite multiplicity. Let $P<G$ be a parabolic
subgroup and $M$ be its Levi quotient.  Let $(M,(H',\chi'))$ be a descendant of $(G,(H,\chi))$.  Then $(M,(H',\chi'))$ is  an $F$-spherical pair of finite cuspidal multiplicity.
\end{lemma}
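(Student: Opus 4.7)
The proof splits naturally into the two assertions: the $F$-sphericality of $(M, H')$, and the finite cuspidal multiplicity of $(M, (H', \chi'))$. For $F$-sphericality, my plan is a direct orbit-counting argument. Given any parabolic subgroup $\mathbf{Q} \subset \mathbf{M}$, let $\tilde{\mathbf{Q}} \subset \overline{\mathbf{P}}$ be its preimage under the quotient map $\overline{\mathbf{P}} \twoheadrightarrow \mathbf{M}$; then $\tilde{\mathbf{Q}}$ is a parabolic subgroup of $\mathbf{G}$ containing $\overline{\mathbf{U}}$. Under the $M$-equivariant identification $M/H' \cong A^x = \overline{U}\backslash\overline{P}x$, and since $\overline{U} \subset \tilde{Q}$, the $Q$-orbits on $M/H'$ correspond bijectively to $\tilde{Q}$-orbits on $\overline{P}x \subset G/H$, which form a subset of $\tilde{Q} \backslash G/H$. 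The latter set is finite by the $F$-sphericality of $(G, H)$, and the assertion follows.

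For the finite cuspidal multiplicity, let $\rho$ be an irreducible cuspidal representation of $M$. I would first consider the parabolically induced $G$-representation $i_{M,\overline{P}}^G(\rho) := \ind_{\overline{P}}^G(\Delta_{G/\overline{P}}^{-1/2}\rho)$, which is of finite length in $\cM(G)$ since parabolic induction of an irreducible cuspidal is of finite length. The finite-multiplicity hypothesis on $(G, (H, \chi))$, applied to each composition factor, yields $\dim\Hom_G(\ind_H^G(\chi), i_{M,\overline{P}}^G(\rho)) < \infty$. By the version of first adjointness associated to the parabolic $\overline{P}$ --- that is, $\overline{r}_M^G$ is left adjoint to $i_{M,\overline{P}}^G$ --- this identifies with $\dim\Hom_M(\overline{r}_M^G(\ind_H^G(\chi)), \rho) < \infty$. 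Lemma \ref{lem:geo} then equips $\overline{r}_M^G(\ind_H^G(\chi))$ with a finite filtration whose associated graded is $\bigoplus_i \ind_{H_i}^M(\chi_i)$, where $(M, (H_i, \chi_i))$ ranges over the descendants of $(G, (H, \chi))$, and our target pair $(M, (H', \chi'))$ appears among them.

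The main obstacle is to pass from the finite total dimension $\dim\Hom_M(\overline{r}_M^G(\ind_H^G(\chi)), \rho)$ to the finite-dimensionality of the single summand $\Hom_M(\ind_{H'}^M(\chi'), \rho)$, since $\Hom_M(-, \rho)$ is contravariant and only left exact, so the geometric-lemma filtration does not split cleanly under it. My plan to resolve this is to localize to the Bernstein block $\Omega_\rho$ containing $\rho$. By Bernstein's theory, $\Omega_\rho$ is Morita-equivalent to modules over a finitely generated commutative algebra (a finite quotient of $\cO(\Psi_M)$), in which $\rho$ corresponds to the residue field at a closed point. The $\Omega_\rho$-component of the filtration is again finite with graded pieces $(\ind_{H_i}^M(\chi_i))_{\Omega_\rho}$; combining the finiteness of $\dim\Hom_M(\overline{r}_M^G(\ind_H^G(\chi)), \rho')$ for all cuspidals $\rho'$ in $\Omega_\rho$ (obtained by repeating the argument for unramified twists) with Nakayama-type reasoning over this commutative algebra, one shows that each graded piece is finitely generated over the Bernstein center of $\Omega_\rho$, yielding the desired bound $\dim\Hom_M(\ind_{H'}^M(\chi'), \rho) < \infty$.
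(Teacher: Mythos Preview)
Your sphericality argument is correct (the paper merely says ``clearly''), and your setup for the cuspidal multiplicity --- parabolic induction of $\rho$ via $\overline{P}$, finite length, first adjointness, and the geometric lemma filtration --- matches the paper exactly. The divergence is in the last step: passing from $\dim\Hom_M(\overline{r}_M^G(\ind_H^G\chi),\rho)<\infty$ to $\dim\Hom_M(\ind_{H'}^M\chi',\rho)<\infty$ for a single graded piece.

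The paper's route here is a direct homological one. From a short exact sequence $0\to A\to B\to C\to 0$ one has
\[
0\to\Hom(C,\rho)\to\Hom(B,\rho)\to\Hom(A,\rho)\to\Ext^1(C,\rho),
\]
so bounding $\Hom(A,\rho)$ requires control of $\Ext^1(C,\rho)$. Lemma~\ref{FinDimH1H0} supplies precisely this: for \emph{cuspidal} $\rho$, if $\dim\Hom(\ind_{H_i}^M\chi_i,\rho)<\infty$ then also $\dim\Ext^1(\ind_{H_i}^M\chi_i,\rho)<\infty$; its proof uses projectivity of cuspidals restricted to $M^1$ to reduce to group homology of the lattice $H_i\cap M_0/H_i\cap M^1$. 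Lemma~\ref{LinAlg} then runs a decreasing induction down the filtration.

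Your Bernstein-block/Nakayama plan has a genuine gap. First, a cuspidal block of $M$ is not in general Morita equivalent to a \emph{commutative} algebra; it is only finite over its commutative center. Second, and more seriously, the implication ``$\dim\Hom(V,\rho')<\infty$ for every irreducible $\rho'$ in the block $\Rightarrow$ $V$ is finitely generated over the center of the block'' is simply false. Already for $M=\GL_1$ and the block of the trivial character (equivalent to $\bC[t,t^{-1}]$-modules), the module $\bC(t)$ has $\Hom$ equal to zero to every closed point yet is not finitely generated. So ``finite fibers at all closed points'' yields neither finite generation nor any Nakayama leverage. In fact, the conclusion you are aiming for --- that each $(\ind_{H_i}^M\chi_i)_{\Omega_\rho}$ is finitely generated over $\fz(\Omega_\rho)$ --- is exactly the ``finite cuspidal type'' property, which the paper establishes only \emph{after} Lemma~\ref{FinMultCuspDesc} via the substantial Theorem~\ref{thm:EqCuspFinGen}; invoking it here would be circular. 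The idea you are missing is the $\Ext^1$ bound of Lemma~\ref{FinDimH1H0}.
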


\begin{remark}
One can easily show that the converse statement to this lemma is also true. Namely, if all the descendants of the pair  have finite cuspidal multiplicity then the pair has finite multiplicity.  This also implies that in this case all the descendants have finite multiplicity.
However we will not use these facts.
\end{remark}

We will need the following lemmas.

\begin{lemma} \lbl{LinAlg}
Let $M$ be an l-group and $V,W$ be smooth representations of $M$ such that
that $\dim \Hom(V,W) < \infty$.
Let $0=F^0V \subset ... \subset F^{n-1}V \subset F^nV=V$ be a
finite filtration of $V$ by subrepresentations. Suppose that for
any $i$, either $$\dim \Hom(F^iV/F^{i-1}V,W) = \infty$$ or $$\text{both }\dim\Hom(F^iV/F^{i-1}V,W)< \infty \text{ and }\dim \Ext^{1}(F^iV/F^{i-1}V,W) <
\infty.$$ Then $\dim\Hom(F^iV/F^{i-1}V,W) < \infty$ for any $i$.
\end{lemma}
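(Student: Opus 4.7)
The plan is to argue by contradiction via the standard long exact sequence in $\mathrm{Ext}^{*}(-,W)$ attached to the short exact sequences of the filtration.

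Suppose some index is \emph{bad}, i.e.\ satisfies $\dim\Hom(F^iV/F^{i-1}V,W)=\infty$, and let $j$ be the largest bad index. By hypothesis, every index $i>j$ is \emph{good}: both $\dim\Hom(F^iV/F^{i-1}V,W)$ and $\dim\Ext^{1}(F^iV/F^{i-1}V,W)$ are finite. First I would show, by induction on $i\ge j$, that
\[
\dim\Hom(F^iV/F^jV,W)<\infty \quad\text{and}\quad \dim\Ext^{1}(F^iV/F^jV,W)<\infty.
\]
The base case $i=j$ is trivial. For the inductive step, the short exact sequence
\[
0\to F^{i-1}V/F^jV \to F^iV/F^jV \to F^iV/F^{i-1}V\to 0
\]
yields a six-term piece of the long exact sequence whose outer terms are finite-dimensional by the inductive hypothesis and by goodness of $i>j$; hence the middle terms are finite-dimensional as well. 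Taking $i=n$ gives both $\dim\Hom(V/F^jV,W)<\infty$ and $\dim\Ext^{1}(V/F^jV,W)<\infty$.

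Next I would apply the long exact sequence associated with
\[
0\to F^jV \to V \to V/F^jV\to 0,
\]
namely
\[
0\to \Hom(V/F^jV,W)\to \Hom(V,W)\to \Hom(F^jV,W)\to \Ext^{1}(V/F^jV,W)\to\cdots
\]
Since $\dim\Hom(V,W)<\infty$ by assumption and $\dim\Ext^{1}(V/F^jV,W)<\infty$ by the previous step, we conclude that $\dim\Hom(F^jV,W)<\infty$. Finally, the inclusion coming from $0\to F^{j-1}V\to F^jV\to F^jV/F^{j-1}V\to 0$ provides an embedding
\[
\Hom(F^jV/F^{j-1}V,W)\hookrightarrow\Hom(F^jV,W),
\]
so $\dim\Hom(F^jV/F^{j-1}V,W)<\infty$, contradicting the badness of $j$. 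Therefore no bad index exists.

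This is a pure homological-algebra argument, so there is no real obstacle; the only subtlety is bookkeeping, i.e.\ choosing the largest bad index so that one can propagate both $\Hom$- and $\Ext^{1}$-finiteness \emph{downward from the top} of the filtration past the bad step, where only $\Hom$-finiteness is needed to reach a contradiction.
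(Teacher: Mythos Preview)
Your argument is correct and follows essentially the same homological-algebra route as the paper: propagate finiteness downward through the filtration via the long exact sequences in $\Hom(-,W)$ and $\Ext^1(-,W)$. The paper organizes this slightly more efficiently as a direct decreasing induction showing $\dim\Hom(F^iV,W)<\infty$ for all $i$ (starting from $i=n$), which at each step immediately gives $\dim\Hom(F^iV/F^{i-1}V,W)<\infty$ and hence, by the dichotomy, $\dim\Ext^1(F^iV/F^{i-1}V,W)<\infty$; this avoids your contradiction wrapper and the separate induction on the quotients $F^iV/F^jV$, but the underlying idea is identical.
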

\begin{proof}
We prove by a decreasing induction on $i$ that $\dim\Hom(F^iV,W)<\infty$, and, therefore, $\dim\Hom(F^iV/F^{i-1}V,W)<\infty$ and by the conditions of the lemma $$\dim \Ext^{1}(F^iV/F^{i-1}V,W)<\infty.$$ Consider the short exact sequence
$$0 \to F^{i-1}V \to F^iV \to F^iV/F^{i-1}V \to 0,$$
and the corresponding long exact sequence
$$...\ot \Ext^1(F^iV/F^{i-1}V,W)  \ot \Hom(F^{i-1}V,W) \ot \Hom(F^iV,W) \ot \Hom(F^iV/F^{i-1}V,W) \ot 0.$$
In this sequence $\dim \Ext^{1}(F^iV/F^{i-1}V,W) < \infty$ and $\dim \Hom
(F^iV,W) < \infty$, and hence $\dim \Hom(F^{i-1}V,W) < \infty$.
\end{proof}

%

\begin{lemma}
\lbl{FinDimH1H0}
Let $(G,(H,\chi))$ be a twisted pair.
Let $\rho$ be an irreducible cuspidal representation of
$G$.  Suppose that $\dim \Hom(\ind_H^G(\chi), \rho)< \infty$. Then $\dim \Ext^1(\ind_H^G(\chi), \rho)< \infty$.
\end{lemma}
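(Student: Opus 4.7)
The plan is to use the Bernstein decomposition of $\cM(G)$ to reduce to the cuspidal Bernstein block $\mathfrak{s}=[\rho]$, and then exploit the Morita equivalence available there to translate the question into commutative algebra over a regular local ring.

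Set $B:=(\ind_{H}^{G}(\chi))_{\mathfrak{s}}$. Since $\rho$ belongs only to the block $\cM(G)_{\mathfrak{s}}$, one has $\Hom_{G}(\ind_{H}^{G}(\chi),\rho)=\Hom_{G}(B,\rho)$ and similarly for $\Ext^{1}$. By \Cref{CuspProj}, $\Psi(\rho):=\ind_{G^{1}}^{G}(\rho|_{G^{1}})$ is projective in $\cM(G)$ and lies in $\cM(G)_{\mathfrak{s}}$. It is in fact a projective generator of $\cM(G)_{\mathfrak{s}}$: every simple object of the block has the form $\rho\otimes\psi$ with $\psi\in\Psi_{G}$, and admits a nonzero morphism from $\Psi(\rho)$ by Frobenius reciprocity for $\ind_{G^{1}}^{G}$. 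Morita theory therefore yields an equivalence
$$\cM(G)_{\mathfrak{s}}\simeq \cA\text{-mod}, \qquad V\mapsto \Hom_{G}(\Psi(\rho),V),$$
where $\cA:=\End_{G}(\Psi(\rho))^{\operatorname{op}}$. Standard structure theory identifies $\cA\cong\cO(\Psi_{G}/W)$ for a finite subgroup $W\subset\Psi_{G}$ acting freely by translation, making $\cA$ a regular, commutative, finitely generated $\bC$-algebra of Krull dimension $n:=\dim\Psi_{G}$.

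Under this equivalence, $\rho$ corresponds to the simple module $\cA/\fm$ for a unique maximal ideal $\fm\subset\cA$, and $B$ corresponds to an $\cA$-module $M$. Standard calculations give
$$\Hom_{G}(B,\rho)\cong (M/\fm M)^{*},\qquad \Ext^{1}_{G}(B,\rho)\cong \Ext^{1}_{\cA_{\fm}}(M_{\fm},\cA_{\fm}/\fm\cA_{\fm}),$$
so the hypothesis translates to $\dim_{\bC}(M/\fm M)<\infty$.

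It remains to deduce $\dim\Ext^{1}_{\cA_{\fm}}(M_{\fm},\cA_{\fm}/\fm\cA_{\fm})<\infty$. The critical step is to establish finite generation of $M_{\fm}$ over the regular local ring $\cA_{\fm}$; this does not follow formally from finiteness of $M_{\fm}/\fm M_{\fm}$ for arbitrary modules (it fails, for example, for $M_{\fm}$ a localization of $\cA_{\fm}$), but holds here by exploiting the concrete description of $M$ coming from Frobenius reciprocity---identifying $M$ with $\Hom_{G^{1}}(\rho|_{G^{1}},(\ind_{H}^{G}\chi)|_{G^{1}})$---and Mackey's decomposition of $(\ind_{H}^{G}\chi)|_{G^{1}}$ as a sum of compactly induced representations of $G^{1}$, together with the compactness of the matrix coefficients of $\rho|_{G^{1}}$ guaranteed by cuspidality. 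Once $M_{\fm}$ is finitely generated, regularity of $\cA_{\fm}$ yields a resolution of length at most $n$ by finitely generated free $\cA_{\fm}$-modules; applying $\Hom_{\cA_{\fm}}(-,\cA_{\fm}/\fm\cA_{\fm})$ then produces a finite complex of finite-dimensional $\bC$-vector spaces, whose first cohomology is the desired Ext group. The finite-generation step is the heart of the argument and the main obstacle.
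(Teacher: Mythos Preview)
Your approach is genuinely different from the paper's and, as you yourself flag, the argument is incomplete at exactly the point you call ``the main obstacle''. You need $M_{\fm}$ to be finitely generated over $\cA_{\fm}$, and you correctly observe that finiteness of $M_{\fm}/\fm M_{\fm}$ does not give this for arbitrary modules. Your one-sentence sketch (Frobenius reciprocity plus Mackey plus compactness of matrix coefficients) does not close the gap: carrying it out means controlling $\Hom_{G}(\Psi(\rho),\ind_{H}^{G}\chi)$ as a module over $\cO(\Psi_G)$, which is precisely the ``finite cuspidal type'' property that the paper establishes only later (\Cref{thm:EqCuspFinGen}) via a substantial and independent argument---and there under the stronger hypothesis of finite cuspidal multiplicity for \emph{all} twists, not just the single $\rho$ you are given here. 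So as written your route either invokes a much harder result with a stronger hypothesis than you have, or leaves the essential step unproved. A secondary point: $\cA=\End_{G}(\Psi(\rho))^{\mathrm{op}}$ need not be commutative when $\rho|_{G^{1}}$ is reducible; the block is still Morita equivalent to modules over the regular commutative ring $\cO(\Psi_{G}/W)$, but not via the generator $\Psi(\rho)$ in the way you describe.

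The paper avoids all of this by a short and direct reduction. It first passes, via duality (\Cref{lem:tilde}) and Frobenius reciprocity for $\Ind_{H}^{G}$, to $\Ext^{i}_{H}(\widetilde{\rho}\otimes\hat\chi^{-1},\bC)$. The key observation is then that cuspidality makes $\widetilde{\rho}|_{G^{1}}$, hence $\widetilde{\rho}|_{I}$ with $I=H\cap G^{1}$, projective (\Cref{CuspProj}), so the $I$-coinvariants functor collapses the computation to $\Ext^{i}_{H/I}\bigl((\widetilde{\rho}\otimes\hat\chi^{-1})_{I},\bC\bigr)$. Since $H/I$ is an extension of the finite group $H/J$ (with $J=H\cap G_{0}$) by the lattice $J/I$, this reduces to an elementary lattice-homology calculation (\Cref{AFG}) relating $\oH_{1}$ to $\oH_{0}$, from which finite-dimensionality of $\Ext^{1}$ follows immediately from that of $\Ext^{0}$. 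No finite-generation input on the $H$-side is needed; projectivity of the cuspidal restriction does all the work.
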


For the proof we will need the following straightforward lemma.

\begin{lemma} \lbl{AFG}
Let $L$ be a lattice. 
Let $V$ be a linear
space. Let $L$ act on $V$ by a character. Then
$$\oH_1(L,V) =  \oH_0(L,V) \otimes_{\bC}(L \otimes_{\bZ}\bC).$$
\end{lemma}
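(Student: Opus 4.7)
The plan is to compute $H_0(L,V)$ and $H_1(L,V)$ directly via the Koszul resolution of $\bC$ over the group algebra $\bC[L]$. Fix a $\bZ$-basis $l_1,\dots,l_n$ of $L$; then $\bC[L]=\bC[t_1^{\pm 1},\dots,t_n^{\pm 1}]$, and the Koszul complex on the sequence $(t_1-1,\dots,t_n-1)$ is a free resolution of the trivial module $\bC$. Let $\chi$ denote the character by which $L$ acts on $V$, and set $\alpha_i:=\chi(l_i)-1\in\bC$. Tensoring the Koszul resolution with $V$ over $\bC[L]$ yields the complex $V\otimes_\bC\Lambda^\bullet(L\otimes_\bZ\bC)$, whose differential on $V\otimes\Lambda^k$ is determined by multiplication by the scalars $\alpha_i$. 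The group homology $H_*(L,V)$ is computed as the homology of this complex.

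I would now split into two cases according to whether $\chi$ is trivial. If $\chi$ is trivial, then every $\alpha_i=0$, the differential vanishes identically, and $H_k(L,V)=V\otimes_\bC\Lambda^k(L\otimes_\bZ\bC)$ for every $k$. In particular, $H_0(L,V)=V$ and $H_1(L,V)=V\otimes_\bC(L\otimes_\bZ\bC)$, which is precisely the claimed identity.

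In the remaining case $\chi$ is nontrivial, so some $\alpha_i\neq 0$. Since $\bC$ is a field, this $\alpha_i$ is a unit, and the Koszul complex on any sequence containing a unit in a ring is acyclic. Consequently $H_k(L,V)=0$ for every $k$, and both sides of the asserted formula vanish.

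There is no real obstacle here; the argument is a direct Koszul computation. The only small point requiring care is making sure that "$L$ acts on $V$ by a character" is being used in the natural sense $l\cdot v=\chi(l)v$, so that each $t_i-1\in\bC[L]$ indeed acts on $V$ as the scalar $\alpha_i$; once this is noted, the two cases above settle the lemma.
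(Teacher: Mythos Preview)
Your argument is correct. The paper does not give a proof of this lemma at all, merely labeling it ``straightforward''; your Koszul-complex computation is exactly the kind of routine verification the authors had in mind, and it handles both the trivial and nontrivial character cases cleanly.
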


\begin{proof}[Proof of \Cref{FinDimH1H0}]
By \Cref{lem:tilde} $$\Ext^i(\ind_H^G(\chi)\RamiC{,\rho}) \cong \Ext^i(\widetilde{\rho},\Ind_H^G(\DimaA{\hat \chi})).$$ By Frobenius reciprocity $$\Ext^i(\widetilde{\rho},\Ind_H^G(\DimaA{\hat \chi}))\cong \Ext^i_H(\widetilde{\rho},\DimaA{\hat \chi}).$$ Let $I:=H\cap G^1$ and $J:=H\cap G_0$.  Note that
$$ \Ext^i_H(\widetilde{\rho},\DimaA{\hat \chi} )\cong \Ext^i_H(\widetilde{\rho}\otimes \DimaA{\hat \chi}^{-1}, \bC) \cong \Ext^i_{H/I}((\widetilde{\rho}\otimes \DimaA{\hat \chi}^{-1})_{I}, \bC),$$
where the last isomorphism follows from \Cref{CuspProj}. Now\RamiC{, since $H/J$ is finite,}
$$ \Ext^i_{H/I}((\widetilde{\rho}\otimes \DimaA{\hat \chi}^{-1})_{I}, \bC) \cong \Hom _{H/J}(\oH_i(J/I,(\widetilde{\rho}\otimes\DimaA{\hat \chi}^{-1})_{I}),\bC),$$
which implies the assertion by \Cref{AFG}.
\end{proof}

%

Now we are ready to prove Lemma \ref{FinMultCuspDesc}.
\begin{proof}[Proof of Lemma \ref{FinMultCuspDesc}]
Clearly $(M,H')$ is $F$-spherical. It remains to prove that
\begin{equation}\label{eq:CuspDesMult}
\dim \Hom(\ind_{H'}^{M}(\chi'),\tau)<\infty,
\end{equation} for any irreducible cuspidal representation $\tau$ of $M$.

Since $(G,(H,\chi))$ has finite multiplicity, we have  $\dim \Hom(\ind_H^G(\chi),\pi)<\infty$ for any irreducible $\pi\in \cM(G)$. Thus for any irreducible $\tau \in \cM(M)$ we have
$$\dim \Hom_G(\ind_H^G(\chi),\overline{i}_{M}^G(\tau)))<\infty.$$
Thus $$\dim \Hom_M(\overline{r}^G_{M}(\ind_H^G(\chi)),\tau)<\infty.$$ By \Cref{lem:geo},
there exists a filtration on $\overline{r}^G_{M}(\ind_H^G(\chi))$ such that $Gr^i(\overline{r}^G_{M}(\ind_H^G(\chi)))=\ind_{H_i}^G(\chi_i)$ where $(M,(H_i,\chi_i))$ ranges over all the descendants of $(G,(H,\chi))$ up to equivalence. In particular we can assume that for some $i_0$, we have $H_{i_0}=H',\, \chi_{i_0}=\chi'$.
By \Cref{FinDimH1H0}, this filtration satisfies the conditions of \Cref{LinAlg} and thus \eqref{eq:CuspDesMult} holds.
\end{proof}

\subsection{Finite cuspidal type} \label{subsec:CuspFinType}
Let us now prove the following cuspidal analogue of theorem \ref{thm:EqFinGen}
\begin{thm}\label{thm:EqCuspFinGen}
If $(G,(H,\DimaA{\hat \chi}))$  has \EitanA{finite cuspidal} multiplicity, then $(G,(H,\chi))$ has \EitanA{finite cuspidal} type.
\end{thm}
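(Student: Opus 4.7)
Let $\rho$ be an irreducible cuspidal representation of $G$; we must show that $M := \Hom_G(\Psi(\rho), \ind_H^G(\chi))$ is finitely generated over $\cO(\Psi_G) \subseteq \End_G(\Psi(\rho))$.

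First, I would translate the hypothesis. By \Cref{lem:DualFinMult}, finite cuspidal multiplicity of $(G,(H,\hat\chi))$ is equivalent to $\dim\Hom_H(\widetilde\pi, \chi) < \infty$ for every irreducible cuspidal $\pi$; specialising to $\pi = \widetilde\rho \otimes \psi^{-1}$ yields $\dim\Hom_H((\rho \otimes \psi)|_H, \chi) < \infty$ for every $\psi \in \Psi_G$. Since $\Psi(\rho)$ is a projective generator of the cuspidal Bernstein block $\mathfrak{s}_\rho$ (\Cref{CuspProj}), each fibre of $M$ over $\psi \in \Psi_G$ equals $\Hom_G(\rho \otimes \psi, \ind_H^G(\chi))$.

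I would then realise $M$ explicitly via matrix coefficients. For $\ell \in \Hom_H((\rho \otimes \psi)|_H, \chi)$, the assignment $v \mapsto (g \mapsto \ell(\rho(g) v))$ defines an $(H,\chi)$-equivariant smooth function on $G$; compactness of matrix coefficients of cuspidal representations (\Cref{CuspComp}), together with the $H$-equivariance, forces compact support modulo $H$, so the output lies in $\ind_H^G(\chi)$. This yields a canonical isomorphism $\Hom_G(\rho \otimes \psi, \ind_H^G(\chi)) \cong \Hom_H((\rho \otimes \psi)|_H, \chi)$ for each $\psi$, which assembles into an $\cO(\Psi_G)$-linear realisation of $M$. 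Combined with the translated hypothesis, this shows each fibre of $M$ is finite-dimensional.

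The crucial step, and the main obstacle, is to convert this pointwise finite-dimensionality into global finite generation of $M$ over $\cO(\Psi_G)$. The naive embedding $M \hookrightarrow \Hom_G(\Psi(\rho), \Ind_H^G(\chi)) = \Hom_H(\Psi(\rho)|_H, \chi)$ (from $\ind_H^G(\chi) \hookrightarrow \Ind_H^G(\chi)$ together with Frobenius reciprocity for smooth induction from a closed subgroup) is insufficient, since $\Psi(\rho)|_H$ admits a coinduction-style decomposition indexed by the possibly infinite orbit space $\overline{H}\backslash\overline{G}$ (with $\overline G := G/G^1$ and $\overline H$ the image of $H$), and the ambient Hom is typically \emph{not} finitely generated over $\cO(\Psi_G)$. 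The compact-support constraint inherited from the matrix coefficient realisation is essential: it restricts elements of $M$ to be supported on only finitely many $\overline H$-orbits, each contributing a finite-dimensional piece by admissibility of $\rho|_{G^1}$. Finite generation of $M$ over $\cO(\Psi_G)$ then follows from Noetherianity of $\cO(\Psi_G)$ (\Cref{thm:Ber}\eqref{3}).
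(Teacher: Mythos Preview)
Your proposal correctly identifies the central difficulty --- passing from fibre-wise finite dimensionality to global finite generation over $\cO(\Psi_G)$ --- but the sketch in your final paragraph does not close the gap, and there is already a problem in your matrix-coefficient step.

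First, the isomorphism $\Hom_G(\rho\otimes\psi,\ind_H^G(\chi))\cong\Hom_H((\rho\otimes\psi)|_H,\chi)$ is not valid in general. \Cref{CuspComp} gives compactness of matrix coefficients modulo $G^1$ (equivalently modulo $Z(G)$), not modulo $H$. When $H$ fails to contain $Z(G)$ the function $g\mapsto\ell(\rho(g)v)$ need not lie in $\ind_H^G(\chi)$: already for $G=F^\times$, $H=\{1\}$, $\rho$ a character, the left side is zero while the right side is one-dimensional. So your ``realisation of $M$'' is not an isomorphism but only a (possibly strict) inclusion of the left side into the right.

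Second, in the last paragraph, admissibility of $\rho|_{G^1}$ controls $K$-invariants for \emph{compact} open $K$; it says nothing about $\Hom_{H\cap G^1}(\rho,\chi)$ when $H\cap G^1$ is non-compact, so ``each contributing a finite-dimensional piece'' is unjustified. And even granting that each element of $M$ is finitely supported with finite-dimensional pieces, Noetherianity of $\cO(\Psi_G)$ only helps once $M$ is exhibited as a submodule of a \emph{specified} finitely generated $\cO(\Psi_G)$-module; you have not done this, and you yourself note the ambient $\Hom$ is typically not finitely generated.

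The paper's argument has a different architecture. One writes $M=\Hom_{G^1}(\rho,\ind_H^G\chi)=:\Phi$, enlarges it to $\Theta:=\Hom_{G^1}(\rho,\Ind_{HZ(G)}^G\ind_H^{HZ(G)}\chi)$, and shows $\Theta$ is a finitely generated $G$-representation (then \Cref{lem:Noeth} handles $\Phi$). The generation of $\Theta$ reduces to finite generation, over $H$, of $V\cong\Hom_{H''}(\rho',\Sc(H''/H'))$ where $H''=(HZ(G))\cap G^1$, $H'=H\cap G^1$, and $\rho'$ is a twist of $\rho|_H$. Two ingredients do the real work: \Cref{G1} transfers the finite-multiplicity hypothesis from $H$ to $H''$, and \Cref{fg} --- proved by passing to the fraction field $\bC(H''/H')$, specialising at a character built from algebraically independent transcendentals, and invoking \Cref{CA} --- converts the resulting pointwise bound into finite generation. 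This specialisation-and-Noetherian argument is exactly the mechanism your sketch is missing.
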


We will need several lemmas.

%
%

\begin{lemma}\label{lem:CharExt}
Let $A$ be a locally compact group and $B$ be a closed subgroup. Suppose that $A=BZ(A)$. Then any character of $B$ can be lifted to $A$.
\end{lemma}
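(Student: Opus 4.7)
The plan is to construct the extension $\tilde\chi$ of $\chi$ from $B$ to $A = BZ(A)$ by choosing a character $\eta$ of $Z(A)$ that agrees with $\chi$ on $B \cap Z(A)$, and then setting
\[
\tilde\chi(bz) := \chi(b)\,\eta(z), \qquad b \in B, \; z \in Z(A).
\]
The well-definedness of this formula hinges exactly on the agreement condition: if $bz = b'z'$ for some $b,b' \in B$ and $z,z' \in Z(A)$, then $c := b^{-1}b' = z(z')^{-1}$ lies in $B \cap Z(A)$, and equality $\chi(b)\eta(z) = \chi(b')\eta(z')$ is equivalent to $\chi(c) = \eta(c)$. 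Given well-definedness, the homomorphism property $\tilde\chi((bz)(b'z')) = \tilde\chi(bz)\tilde\chi(b'z')$ follows immediately from the centrality of $z$, which lets one write $(bz)(b'z') = (bb')(zz')$.

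First I would note that $Z(A)$ is closed in $A$ (it is the intersection of centralizers of single elements), hence $Z(A)$ is a locally compact abelian group and $B \cap Z(A)$ is a closed subgroup of it. The construction of $\eta$ then reduces to the classical statement that any continuous character of a closed subgroup of an LCA group extends to a character of the ambient group. For unitary characters this is a standard consequence of Pontryagin duality, namely the surjectivity of the restriction map $\widehat{Z(A)} \twoheadrightarrow \widehat{B \cap Z(A)}$; the general $\bC^{\times}$-valued case is handled by decomposing $\bC^{\times} \cong S^1 \times \bR_{>0}$ and extending each factor separately (using the logarithm and divisibility for the second).

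The remaining delicate point is continuity of $\tilde\chi$. The naive approach of factoring the continuous map $B \times Z(A) \to \bC^{\times}$, $(b,z) \mapsto \chi(b)\eta(z)$, through the continuous surjection $(b,z) \mapsto bz$, is not quite enough because this surjection need not be an open quotient map in general. However, the algebraic bijection $B/(B \cap Z(A)) \to A/Z(A)$ induced by the inclusion $B \hookrightarrow A$ is a continuous homomorphism between locally compact groups, and in the setting at hand it is a topological isomorphism (e.g.\ by an open mapping argument), from which continuity of $\tilde\chi$ follows. I expect this continuity verification to be the main obstacle; once it is dispatched, the algebraic part of the proof is formal.
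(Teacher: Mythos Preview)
Your algebraic construction is correct, and you have correctly identified continuity of $\tilde\chi$ as the crux. The open mapping argument you propose, however, needs a countability hypothesis (e.g.\ $\sigma$-compactness of $B$ or $A$) that is not assumed in the lemma; without it, the continuous bijection $B/(B\cap Z(A)) \to A/Z(A)$ need not be a homeomorphism, and your $\tilde\chi$ can fail to be continuous for some choices of $\eta$. In the paper's $p$-adic context everything is second countable and your argument goes through, but the lemma is stated for arbitrary locally compact groups.

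The paper sidesteps this issue with a one-line reduction: pass to the quotient $A/\ker\chi$. Since $\ker\chi$ is closed in $B$ (hence in $A$) and normal in $A$ (conjugation by $bz\in BZ(A)$ equals conjugation by $b\in B$), the quotient $A/\ker\chi$ is again locally compact. The image of $B$ is abelian (it embeds in $\bC^\times$), and the image of $Z(A)$ is still central, so $A/\ker\chi$ is abelian. Now one is reduced to extending a character from a closed subgroup of an LCA group to the whole group, which is exactly the cited result of Dixmier. The continuity problem never arises because the quotient map is automatically open. Your appeal to the LCA extension theorem is the same, but you invoke it on $Z(A)$ and then have to glue; the paper invokes it once on the abelianized $A$ and is done.
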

\begin{proof}
Taking quotient by the kernel of the character we reduce to the case of abelian $A$. In this case the statement is \cite[Theorem 5]{Dix}.
\end{proof}

\begin{lemma}
\label{G1}
Let $(G,H)$ be an $F$-spherical pair, and denote $\widetilde{H}=HZ(G)\cap G^1$. Let $\chi$ be a character of $H$. Suppose that for any smooth
(respectively cuspidal) irreducible representation $\rho$ of
$G$ we have $$\dim\Hom_{H}(\rho|_{H}, \chi) < \infty$$ Then
for any smooth (respectively cuspidal) irreducible representation
$\rho$ of $G$ and for every character $\psi$ of $\widetilde{H}$ whose restriction to $H\cap G^1$ coincides with $\chi$, we have
$$\dim\Hom_{\widetilde{H}}(\rho|_{\widetilde{H}}, \psi) < \infty.$$
\end{lemma}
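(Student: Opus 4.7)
The plan is to reduce finiteness of $\Hom_{\widetilde{H}}(\rho,\psi)$ to finiteness of $\Hom$ spaces over the original subgroup $H$ via the central character of $\rho$ together with a finite-index argument.

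First, since $\rho$ is irreducible, $Z(G)$ acts on $\rho$ by a scalar character $\omega_\rho$, so any $\widetilde{H}$-equivariant functional is automatically $(Z(G)\cap G^1)$-equivariant with eigenvalue $\omega_\rho|_{Z(G)\cap G^1}$. We may therefore assume $\psi|_{Z(G)\cap G^1} = \omega_\rho|_{Z(G)\cap G^1}$, else $\Hom_{\widetilde{H}}(\rho,\psi) = 0$. Using this compatibility, define a character $\xi_0$ on the subgroup $\widetilde{H}\cdot Z(G) \subseteq HZ(G)$ by $\xi_0(wz) := \psi(w)\,\omega_\rho(z)$ for $w\in\widetilde{H}$, $z\in Z(G)$. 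It is well-defined because $\psi$ and $\omega_\rho$ agree on $\widetilde{H}\cap Z(G) = Z(G)\cap G^1$, and a homomorphism because $Z(G)$ is central in $HZ(G)$.

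Set $H'' := H \cap \widetilde{H}\cdot Z(G)$, and let $q\colon G\twoheadrightarrow G/G^1$ denote the natural projection. A direct calculation yields $H'' = \{h\in H : q(h)\in q(Z(G))\}$ and $\widetilde{H}\cdot Z(G) = H''\cdot Z(G)$. Because $Z(G)$ acts on $\rho$ by the scalar $\omega_\rho = \xi_0|_{Z(G)}$, the $Z(G)$-equivariance in the Hom space is automatic, so $\Hom_{\widetilde{H}}(\rho,\psi) = \Hom_{\widetilde{H}\cdot Z(G)}(\rho, \xi_0) = \Hom_{H''}(\rho, \chi')$, where $\chi' := \xi_0|_{H''}$ satisfies $\chi'|_{H\cap G^1} = \chi|_{H\cap G^1}$. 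By the standard structural fact that $G^1 Z(G)$ has finite index in any connected reductive $G$ over a non-Archimedean local field, the natural embedding $H/H'' \hookrightarrow G/(G^1 Z(G))$ implies that $H''$ has finite index in $H$.

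It remains to extend the finiteness hypothesis and then apply finite-index Frobenius reciprocity. For any character $\chi''$ of $H$ with $\chi''|_{H\cap G^1} = \chi|_{H\cap G^1}$, the ratio $\chi''\chi^{-1}$ factors through $H/(H\cap G^1) \hookrightarrow G/G^1$ and hence extends to an unramified character $\widetilde{\mu}\in\Psi_G$ via \Cref{lem:CharExt}. Applying the hypothesis to the irreducible representation $\rho\otimes\widetilde{\mu}^{-1}$ yields $\Hom_H(\rho,\chi'') = \Hom_H(\rho\otimes\widetilde{\mu}^{-1},\chi) < \infty$, so the finiteness holds for every character of $H$ restricting to $\chi|_{H\cap G^1}$. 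Frobenius reciprocity gives $\Hom_{H''}(\rho,\chi') \cong \Hom_H(\rho, \Ind_{H''}^H \chi')$, and decomposing the finite-dimensional $H$-representation $\Ind_{H''}^H \chi'$ into its irreducible constituents reduces matters to bounding each contribution. When $\chi'$ is $H$-conjugation invariant, the constituents are one-dimensional characters of $H$ each restricting to $\chi|_{H\cap G^1}$, and the extended hypothesis closes the argument. The main obstacle is the non-invariant case, which is resolved by a Clifford-theoretic descent along the stabilizer chain $H \supset H_{\chi'} \supset \cdots$; this chain terminates because $[H:H'']$ is finite, ultimately reducing the count to a finite sum of extended-hypothesis contributions.
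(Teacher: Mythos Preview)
Your route is essentially the paper's: reduce via the central character to the finite-index normal subgroup $H''=H\cap G_0$, apply Frobenius reciprocity, and feed the constituents of $\Ind_{H''}^H\chi'$ into the extended hypothesis. The gap is in the last step. The assertion that $H$-conjugation invariance of $\chi'$ forces the constituents of $\Ind_{H''}^H\chi'$ to be one-dimensional is false in general: if $H$ is the quaternion group of order $8$, $H''$ its center, and $\chi'$ the nontrivial central character, then $H/H''\cong(\bZ/2)^2$ is abelian and $\chi'$ is $H$-invariant, yet $\Ind_{H''}^H\chi'$ is twice the two-dimensional irreducible of $H$. What is actually needed---and what the paper uses---is that $\chi'$ \emph{extends} to a character $\tilde\chi$ of $H$; then $\Ind_{H''}^H\chi'\cong\tilde\chi\otimes\bC[H/H'']$ decomposes into characters because $H/H''$ is finite abelian. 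The extension exists here for a structural reason you do not invoke: $\chi'\cdot(\chi|_{H''})^{-1}$ is trivial on $H\cap G^1$, hence factors through the sublattice $H''/(H\cap G^1)$ of $H/(H\cap G^1)\hookrightarrow G/G^1$, and characters of subgroups of a free abelian group always extend.

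Incidentally, the ``non-invariant case'' you worry about never occurs. Writing $w\in\widetilde H$ as $h_0 z_0$ with $h_0\in H$ and $z_0\in Z(G)$, one has $[h,w]=[h,h_0]\in H\cap G^1$ for every $h\in H$; since $\psi|_{H\cap G^1}=\chi|_{H\cap G^1}$ and $\chi$ kills commutators of $H$, this gives $\psi(hwh^{-1})=\psi(w)$, so $\chi'$ is always $H$-invariant. The Clifford-theoretic descent you sketch is therefore unnecessary---and in any case too vague to constitute a proof, since nothing in it produces characters of $H$ (rather than higher-dimensional representations of intermediate subgroups) to which the extended hypothesis could be applied.
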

\begin{proof}
Let $\rho$ be a smooth (respectively cuspidal) irreducible
representation of $G$.  Using Lemma \ref{lem:CharExt} extend $\chi$ to a character $\chi'$ of $HZ(G)$. Let $\phi$ be a character of $\widetilde{H}$ whose restriction to $H\cap G^1$ is trivial.
 We have to show that $$\dim \Hom_{\widetilde{H}}\left (\rho|_{\widetilde{H}}, \phi\chi' \right) < \infty.$$ We have
 \[
\Hom_{\widetilde{H}}\left (\rho|_{\widetilde{H}},\phi\chi' \right ) =
\Hom_{H Z(G)\cap G_0} \left (\rho|_{(H Z(G))\cap G_0}, \Ind_{\widetilde{H}}^{H Z(G)\cap G_0}\phi\chi'\right).
\]
Since $$HZ(G)\cap G_0=\widetilde{H}Z(G)\cap G_0=\widetilde{H}Z(G),$$
the subspace of $\Ind_{\widetilde{H}}^{(H Z(G))\cap G_0}\phi\chi'$ that transforms under $Z(G)$ according to the central character of $\rho$ is at most one dimensional. If this subspace is $0$, then the lemma is clear. Otherwise, denote it by $\tau$. Since $H\cap G^1$ is normal in $HZ(G)$, we get that the restriction of $\Ind_{\widetilde{H}}^{(H Z(G))\cap G_0}\phi$ to $H\cap G^1$ is trivial, and thus that $\tau|_{H\cap G^1}=\chi'_{H\cap G^1}$. Hence
\begin{multline*}  \Hom_{\widetilde{H}}\left (\rho|_{\widetilde{H}}, \phi\chi' \right )=
\Hom_{(H Z(G))\cap G_0} \left (\rho|_{(H Z(G))\cap G_0}, \tau\right)=\\=\Hom_{H\cap G_0} \left (\rho|_{H \cap G_0}, \tau|_{H\cap G_0}\right)
=\Hom_{H}\left(\rho|_{H},\Ind_{H\cap G_0}^{H}\tau|_{H\cap G_0}\right).
\end{multline*}
Since $H / H\cap G_0$ is finite and abelian, we have
$$\Ind_{H\cap G_0}^{H}(\tau|_{H\cap G_0})=\chi\left(\bigoplus_{i=1}^N \chi_i\right)$$
where $\chi_i$ are  characters of $H$, s.t.  $(\chi_i)|_{H\cap G^1}=1$. \RamiC{By \Cref{lem:CharExt}} the characters $\chi_{i}$ can be extended to  characters of $G$, because $H/(H\cap G^1)$ is a sub-lattice of $G/G^1$. Denoting the extensions by $\Theta_{i}$, we get that
\[
\Hom_{H}\left(\rho|_{H},\chi\chi_i\right)=\Hom_{H}\left((\rho\otimes\Theta_{i}^{-1})|_{H},\chi\right),
\]
but $\rho\otimes\Theta_{i}^{-1}$ is again smooth (respectively cuspidal) irreducible representation of $G$, so this last space is finite-dimensional.
\end{proof}

 \begin{lemma} \lbl{CA}
 Let $A$ be a commutative unital Noetherian algebra without zero divisors and let $K$ be its field of fractions. Let $K^\bN$ be the space of all sequences of elements of $K$. Let $V$ be a finite dimensional subspace of $K^\bN$ and let $M:=V \cap A^\bN$. Then $M$ is finitely generated.
\end{lemma}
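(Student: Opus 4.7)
The plan is to reduce the problem to the familiar statement that every submodule of a finitely generated module over a Noetherian ring is finitely generated. The key observation is that, although $K^{\mathbb{N}}$ is a huge $A$-module, the finite-dimensionality of $V$ allows us to collapse all the ``noise'' coming from the infinitely many coordinates by projecting onto a well-chosen finite set of coordinates.

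Concretely, set $n=\dim_K V$. First I would show that there exist indices $j_1,\dots,j_n\in\mathbb{N}$ such that the projection
\[
\pi:V\longrightarrow K^n,\qquad v\longmapsto (v^{(j_1)},\dots,v^{(j_n)}),
\]
is a $K$-linear isomorphism. This follows because the $n$-dimensional space $V$, viewed as the row span of an $n\times\mathbb{N}$ matrix over $K$, has rank $n$, so some $n$ columns are linearly independent.

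Next I would restrict $\pi$ to $M=V\cap A^{\mathbb{N}}$. Since every coordinate of an element of $M$ lies in $A$, in particular the $j_k$-th coordinates do, giving $\pi(M)\subseteq A^n$. The map $\pi|_M$ is $A$-linear and injective (as $\pi$ itself is), so $M\cong \pi(M)$ as $A$-modules.

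Finally, since $A$ is Noetherian, the free module $A^n$ is a Noetherian $A$-module, hence every $A$-submodule of $A^n$ is finitely generated. Applying this to $\pi(M)$ yields that $M$ is finitely generated. The hypothesis that $A$ has no zero divisors enters only to ensure that $K$ is a field and ``finite-dimensional $K$-vector space'' has its usual meaning; no obstacle is expected, as the main content is the clever choice of $n$ coordinates on which $\pi$ becomes an isomorphism.
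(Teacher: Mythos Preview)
Your proof is correct and follows essentially the same approach as the paper: project $V$ injectively onto finitely many coordinates, observe that $M$ then embeds into a finitely generated free $A$-module, and invoke the Noetherian hypothesis. The only cosmetic difference is that the paper projects onto an initial segment $\{1,\dots,n\}$ of coordinates (for $n$ large enough that the projection is injective on $V$), whereas you select exactly $\dim_K V$ well-chosen coordinates; both variants work for the same reason.
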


\begin{proof} Since $A$
does not have zero divisors, $M$ injects into $K^\bN$. There is a number $n$ such that the projection of $V$ to $K^{\{1,\ldots n\}}$ is injective. Therefore, $M$ injects into $A^{\{1,\ldots n\}}$, and, since $A$ is Noetherian, $M$ is finitely generated.
\end{proof}



\begin{lemma}\label{fg}
Let $L$ be an $l$-group, and let $L' \subset L$ be an open normal subgroup of $L$ such that $L/L'$  is a lattice. Let $\rho$ be a smooth representation of $L$ of countable dimension. Suppose that for any character $\chi$ of $L$ whose restriction to $L'$ is trivial we have $$\dim\Hom_{L}(\rho, \chi) < \infty.$$
Consider $\Hom_{L}(\rho, \Sc(L/L'))$ as a representation of $L$, where $L$ acts by $((hf)(x))([y])=(f(x))([y h])$. Then this representation is finitely generated.
\end{lemma}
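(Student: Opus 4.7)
First I would reduce the statement to a purely $A$-module-theoretic question. Since $L$ acts on $\Sc(L/L')$ through the quotient $L\twoheadrightarrow\Lambda:=L/L'$ (via the regular representation of the discrete lattice $\Lambda$), every $L$-equivariant map $\phi:\rho\to\Sc(L/L')$ kills each vector of the form $\ell'v-v$ with $\ell'\in L'$, and hence factors through the coinvariants $\sigma:=\rho_{L'}$. Writing $A:=\bC[\Lambda]$ (so that $A\cong\Sc(L/L')$ as $L$-representations), one obtains a canonical identification
\[
\Hom_L(\rho,\Sc(L/L'))\;\cong\;\Hom_A(\sigma,A),
\]
under which the $L$-action in the statement corresponds to the tautological $A$-action on the right hand side coming from the target factor $A$. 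The hypothesis on $\rho$ translates into: $\sigma$ is a countable-dimensional $A$-module with $\dim_\bC\Hom_A(\sigma,\bC_\chi)<\infty$ for every $\bC$-point $\chi\in\Spec A$. Setting $N:=\Hom_A(\sigma,A)$, the task becomes to show that $N$ is finitely generated as an $A$-module.

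\paragraph{Applying \Cref{CA}.} Choose a countable $\bC$-basis $\{v_i\}_{i\in\bN}$ of $\sigma$; evaluation at these basis vectors gives an injective $A$-module homomorphism $N\hookrightarrow A^{\bN}$, $\phi\mapsto(\phi(v_i))_i$. By \Cref{CA}, $N$ is finitely generated over $A$ provided that its $K$-span inside $K^{\bN}$ is a finite-dimensional $K$-vector space, where $K:=\mathrm{Frac}(A)$; equivalently, $\dim_K N\otimes_A K<\infty$.

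\paragraph{Bounding the generic dimension.} The key estimate is a rank-semicontinuity argument on $\Spec A$. Suppose $\phi_1,\ldots,\phi_N\in N$ are $K$-linearly independent in $N\otimes_A K$; the associated $A$-linear map $F:A^N\to A^{\bN}$, $(a_j)\mapsto(\sum_j a_j\phi_j(v_i))_i$, is then injective. Viewing $F$ as an $\bN\times N$-matrix with entries in $A$, some $N\times N$ submatrix must have nonzero determinant $\Delta_N\in A\setminus\{0\}$; for any closed point $\chi\in\Spec A$ outside $V(\Delta_N)$, the specialization $F_\chi:\bC^N\to\Hom_A(\sigma,\bC_\chi)$ is injective, yielding
\[
N\;\leq\;\dim_\bC\Hom_A(\sigma,\bC_\chi)\;<\;\infty.
\]

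\paragraph{Upgrading to a uniform bound.} If $\dim_K N\otimes_A K$ were infinite, fix an ascending chain of finite-dimensional $K$-subspaces $V_1\subset V_2\subset\cdots$ of $N\otimes_A K$ with $\dim_K V_n=n$, and let $\Delta_n\in A\setminus\{0\}$ be the nonzero minor associated to $V_n$ as above. Since $\Spec A\cong(\bC^\times)^{\rk\Lambda}$ is an irreducible complex algebraic variety, each $V(\Delta_n)$ is a proper Zariski closed subset, hence of Lebesgue measure zero in the classical topology; a countable union of such subsets cannot exhaust $\Spec A$, so one can choose a single $\chi_0\in\Spec A$ avoiding every $V(\Delta_n)$. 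But then the argument of the previous paragraph gives $\dim_\bC\Hom_A(\sigma,\bC_{\chi_0})\geq n$ for every $n$, contradicting the hypothesis. Hence $\dim_K N\otimes_A K$ is finite, and \Cref{CA} completes the proof. I expect the final step---passing from pointwise finiteness of the fiber dimensions $d_\chi$ to a uniform bound---to be the main obstacle, and the Baire-type argument above is where the countability of the chain $\{V_n\}$ (inherited from the countable $\bC$-dimensionality of $\sigma$) is used decisively.
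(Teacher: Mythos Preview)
Your proof is correct and follows the same overall architecture as the paper: both arguments identify $\Hom_{L}(\rho,\Sc(L/L'))$ with an $A$-submodule of $A^{\bN}$ (with $A=\bC[L/L']$), invoke \Cref{CA}, and thereby reduce the problem to showing that a certain $K$-vector space (with $K=\mathrm{Frac}(A)$) is finite-dimensional by relating its dimension to $\dim\Hom_L(\rho,\chi)$ at some closed point $\chi$.

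The route to this last reduction is where the two proofs diverge. You produce, for each $n$, a nonzero minor $\Delta_n\in A$ witnessing that $n$ independent elements of $N$ stay independent after specialization, and then use a Baire/measure argument on $(\bC^{\times})^{\mathrm{rk}\,\Lambda}$ to find a single $\chi_0$ avoiding the countably many hypersurfaces $V(\Delta_n)$. The paper instead descends the defining linear equations to a countable subfield $K_0\subset\bC$, notes that $K_0(t_1,\dots,t_n)$ embeds in $\bC$ (by choosing $\pi_1,\dots,\pi_n$ algebraically independent over $K_0$), and thereby identifies the generic solution space over $K$ with $\Hom_L(\rho,\chi)$ for the specific character $\chi(g_i)=\pi_i$; this yields an \emph{equality} of dimensions rather than just your lower bound. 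Both arguments use countability in an essential way: you need it so that the family $\{V(\Delta_n)\}$ is countable; the paper needs it so that the coefficients of all the equations live in a countable subfield. Your version is arguably more self-contained (no field-theoretic specialization trick), while the paper's version gives the sharper conclusion that the generic rank is actually \emph{attained} at some $\chi$.

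One minor point of presentation: to apply \Cref{CA} you only need $N\subset V\cap A^{\bN}$ for some finite-dimensional $V\subset K^{\bN}$, together with Noetherianity of $A$; you have this with $V$ equal to the $K$-span of $N$, so the argument goes through even though you have not verified $N=V\cap A^{\bN}$ on the nose.
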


\begin{proof} By assumption, the action of $L$ on $\Hom_{L}(\rho,\Sc(L/L'))$ factors through $L/L'$. Since $L/L'$ is discrete, $\Sc(L/L')$ is the group algebra $\bC[L/L']$. We want to show that $\Hom_{L}(\rho,\bC[L/L'])$ is a finitely generated module over $\bC[L/L']$.

Let $\bC(L/L')$ be the fraction field of $\bC[L/L']$. Choosing a countable basis for the vector space of $\rho$, we can identify any $\bC$-linear map from $\rho$ to $\bC[L/L']$ with an element of $\bC[L/L']^\bN$. Moreover, the condition that the map intertwines the action of $L/L'$ translates into a collection of linear equations that the tuple in $\bC[L/L']^\bN$ should satisfy. Hence, $\Hom_{L'}(\rho,\bC[L/L'])$ is the intersection of the $\bC(L/L')$-vector space $\Hom_{L}(\rho,\bC(L/L'))$ and $\bC[L/L']^\bN$. By Lemma \ref{CA}, it suffices to prove that $\Hom_{L}(\rho,\bC(L/L'))$ is finite dimensional over $\bC(L/L')$.

Since 
$L$ is separable, and $\rho$ is smooth and of countable dimension,
there are only countably many linear equations defining $\Hom_{L}(\rho,\bC(L/L'))$; denote them by $\phi_1,\phi_2,\ldots\in\left(\bC(L/L')^\bN\right)^*$. Choose a countable subfield $K\subset\bC$ that contains all the coefficients of the elements of $\bC(L/L')$ that appear in any of the $\phi_i$'s. If we define $W$ as the
$K(L/L')$-linear subspace of
$K(L/L')^\bN$
defined by the $\phi_i$'s, then $\Hom_{L}(\rho,\bC(L/L'))=W\otimes_{K(L/L')} \bC(L/L')$, so $\dim_{\bC(L/L')}\Hom_{L}(\rho,\bC(L/L'))=\dim_{K(L/L')}W$.

Since $L/L'$ is a lattice generated by, say, $g_1,\ldots,g_n$, we get that $K(L/L')=K(t_1^{\pm 1},\ldots,t_n^{\pm 1})$ $=K(t_1,\ldots,t_n)$. Choosing elements $\pi_1,\ldots,\pi_n\in\bC$ such that $tr.deg_K(K(\pi_1,\ldots,\pi_n))=n$, we get an injection $\iota$ of
$K(L/L')$ into $\bC$. As before, we get that if we denote the $\bC$-vector subspace of $\bC^\bN$ cut by the equations $\iota(\phi_i)$ by $U$, then $\dim_{K(L/L')}W=\dim_{\bC}U$. However, $U$ is isomorphic to $\Hom_{L}(\rho,\chi)$, where $\chi$ is the character of $L/L'$ such that $\chi(g_i)=\pi_i$. By assumption, this last vector space is finite dimensional.
\end{proof}
Now we are ready to prove Theorem \ref{thm:EqCuspFinGen}.
\begin{proof}[Proof of Theorem \ref{thm:EqCuspFinGen}]
Let $\rho$ be an irreducible cuspidal representation of $G$. By \Cref{lem:DualFinMult} we know that $\dim \Hom_{H} (\rho, \chi) < \infty$. We need to show that $
\Hom_{G}(\Psi(\rho),\ind_{H}^{G}\chi)
$ is finitely generated over $\cO(\Psi_G)$.
We have
$$
\Hom_{G}(\Psi(\rho),\ind_{H}^{G}\chi) =
\Hom_{G^1}(\rho,\ind_{H}^{G}\chi).$$
Here we consider the space $\Phi:=\Hom_{G^1}(\rho,\ind_{H}^{G}\chi)$ with
the natural action of $G$. Note that $G^1$ acts trivially and
hence this action gives rise to an action of $G/G^1$, which gives
the $\cO(\Psi_G)$ - module structure. Let $\Theta:=\Hom_{G^1}(\rho,\Ind_{HZ(G)}^{G}\ind_{H}^{HZ(G)}\chi)$.
Clearly $\Phi \subset \Theta$. Thus, by \Cref{lem:Noeth}, it is enough to show that $\Theta$ is finitely generated over $G$.

Denote $H':=H \cap
G^1$ and $H'':= (H Z(G))\cap G^1$.
Consider the subspace $$V:=\Hom_{G^1}(\rho, \Ind_{H''}^{G^1}(\ind_{H'}^{H''}(\chi|_{H'})))  \subset \Theta.$$  It generates
$\Theta$ as a representation of $G$,
and therefore also as an $\cO(\Psi_G)$ - module. Note that $V$ is $H$-invariant. Therefore it is enough to show that $V$ is finitely generated over $H.$


By Frobenius reciprocity we have $V \cong \Hom_{H''}(\rho,\ind_{H'}^{H''}(\chi|_{H'}))$.

 By Lemma \ref{lem:CharExt} $\chi$ can be extended to a character $\chi'$ of $H Z(G)$. Thus $$\ind_{H'}^{H''}(\chi|_{H'}) \cong \chi'|_{H''}\Sc(H''/H').$$

Let $\rho':=\chi'|_{H}^{-1}\rho|_{H}$. Then
$$ V \cong \Hom_{H''}(\rho',\Sc(H''/H')).$$
Under this isomorphism, the action $\Pi$ of $H$ on $V$ is given by $$((\Pi(h)(f))(v))([k])=f(\rho'(h^{-1})v)([h^{-1}kh]),$$
where $h\in H, \, f\in \Hom_{H''}(\rho',\Sc(H''/H')), \, v \in \rho', \, k \in H'', \, [k]=kH'\in H''/H'.$

Let $\Xi$ be the action of $H''$ on $\Hom_{H''}(\rho',\Sc(H''/H'))$ as described in  \Cref{fg}, i.e. $$((\Xi(h)(f))(v))([k])=f(v)([kh]).$$

Let us show that $\Hom_{H''}(\rho',\Sc(H''/H'))$ is finitely generated w.r.t. the action $\Xi$.  By \Cref{fg} it is enough to show that
\begin{equation}\label{=H''Fin}
\dim \Hom_{H''}(\rho', \theta) < \infty
\end{equation}
 for any character $\theta$ of $H''$ with trivial restriction to $H'$. Note that $\Hom_{H''}(\rho', \theta)\cong \Hom_{H''}(\rho, \chi' \theta)$. Thus \eqref{=H''Fin} follows from the hypothesis $\dim \Hom_{H}(\rho, \chi) < \infty$ in view of
 \Cref{G1} and we have shown that $\Hom_{H''}(\rho',\Sc(H''/H'))$ is finitely generated w.r.t. the action $\Xi$.

Now it is enough to show that for any $h \in H''$ there exist an $h' \in H$ and a scalar $\alpha$ s.t. $$\Xi(h)=\alpha \Pi(h').$$ In order to show this let us decompose $h$ to a product $h=zh'$ where $h' \in H$ and $z\in Z(G)$. Now
\begin{multline*}
((\Xi(h)(f))(v))([k])=f(v)([kh])=f(h^{-1}v)([h^{-1}kh])=f(h^{'-1}z^{-1}v)([h^{'-1}kh'])=\\=
\alpha f(h'^{-1}v)([h^{'-1}kh'])= \alpha((\Pi(h')(f))(v))([k]),
\end{multline*}
where $\alpha$ is the scalar with which
$z^{-1}$ acts on $\rho'$.

Thus $V$ is finitely generated over $H$, thus $\Phi$ and $\Theta$ are finitely generated over $G$ and $\Hom_{G}(\Psi(\rho),\ind_{H}^{G}\chi)$ is finitely generated over $\cO(\Psi_G)$.
\end{proof}

\begin{corollary}\label{cor:CuspMultType}
If $(G,(H, \chi))$  has   finite cuspidal multiplicity, then $(G,(H,\chi))$ has finite cuspidal type.
\end{corollary}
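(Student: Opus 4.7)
The plan is to chain two applications of \Cref{thm:EqCuspFinGen} with one application of \Cref{lem:FinTypeFinMult}, exploiting the involutivity $\hat{\hat\chi}=\chi$ of duality of twisted pairs. The key observation is that \Cref{thm:EqCuspFinGen} relates the twisted pair and its dual in opposite directions (finite cuspidal multiplicity of the \emph{dual} controls finite cuspidal type of the \emph{original}), whereas \Cref{lem:FinTypeFinMult} goes from finite cuspidal type back to finite cuspidal multiplicity of the \emph{same} pair; alternating these implications closes a short loop that bootstraps the desired conclusion for the original pair.

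Concretely, starting from the hypothesis that $(G,(H,\chi))$ has finite cuspidal multiplicity, I would first apply \Cref{thm:EqCuspFinGen} to the twisted pair $(G,(H,\hat\chi))$ in place of $(G,(H,\chi))$. Since $\hat{\hat\chi}=\chi$, the required hypothesis of the theorem becomes finite cuspidal multiplicity of $(G,(H,\chi))$, which holds by assumption; the conclusion is that $(G,(H,\hat\chi))$ has finite cuspidal type. Next, by \Cref{lem:FinTypeFinMult} applied to $(G,(H,\hat\chi))$, it follows that $(G,(H,\hat\chi))$ has finite cuspidal multiplicity. Finally, \Cref{thm:EqCuspFinGen} in its original form, applied to the pair $(G,(H,\chi))$, requires precisely that the dual pair $(G,(H,\hat\chi))$ have finite cuspidal multiplicity; since this was just established, we conclude that $(G,(H,\chi))$ has finite cuspidal type.

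I do not foresee any real obstacle, as the argument is purely formal once the three invocations are lined up correctly; all the technical content is concentrated in \Cref{thm:EqCuspFinGen} itself (and, transitively, in the twist-and-finite-generation analysis via \Cref{G1} and \Cref{fg}), to which this corollary reduces mechanically.
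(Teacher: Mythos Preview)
Your proposal is correct and follows exactly the same three-step argument as the paper: apply \Cref{thm:EqCuspFinGen} with $\chi$ replaced by $\hat\chi$ (using $\hat{\hat\chi}=\chi$) to get finite cuspidal type of $(G,(H,\hat\chi))$, then \Cref{lem:FinTypeFinMult} to get finite cuspidal multiplicity of $(G,(H,\hat\chi))$, then \Cref{thm:EqCuspFinGen} once more to conclude.
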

\begin{proof}
Assume \EitanA{that} $(G,(H, \chi))$  has   finite cuspidal multiplicity. By \Cref{thm:EqCuspFinGen} the twisted pair $(G,(H, \DimaA{\hat \chi}))$ has finite cuspidal type.
\RamiA{ By
\Cref{lem:FinTypeFinMult} the pair  $(G,(H, \DimaA{\hat \chi}))$  has   finite cuspidal multiplicity. Applying \Cref{thm:EqCuspFinGen} again we obtain that  $(G,(H,\chi))$ has finite cuspidal type.}
\end{proof}
\subsection{Proof of \Cref{thm:EqFinGen}} \lbl{SecPfFinGen} $\, $

Let $\mathbf{P} < \mathbf{G}$ be a parabolic subgroup and $\mathbf{M}$  be the Levi quotient of $\mathbf{P}$. Let $\rho$ be a cuspidal representation of $M$. We have to  show that $\Hom(i_M^G(\Psi(\rho)),\ind_H^G(\chi))$ is finitely generated over $\cO(\Psi_M)$. By second adjointness theorem (\Cref{thm:2adj}), we have $$\Hom(i_M^G(\Psi(\rho)),\ind_H^G(\chi))=\Hom((\Psi(\rho)),\bar r_M^G(\ind_H^G(\chi))).$$
 By \Cref{lem:geo} the representation $\bar r_M^G(\ind_H^G(\chi))$ has a filtration s.t. $$Gr_i(\bar r_M^G(\ind_H^G(\chi)))=\ind_{H_i}^M(\chi_i)$$ where $(M,(H_i,\chi_i))$ are the descendants of $(G,(H,\chi))$.
Since $i_M^G(\Psi(\rho))$ is  a projective object (\Cref{thm:2adj}), this gives us filtration on $\Hom((\Psi(\rho)),\bar r_M^G(\ind_H^G(\chi)))$ with $$Gr_i\Hom((\Psi(\rho)),\bar r_M^G(\ind_H^G(\chi)))=\Hom((\Psi(\rho)),\ind_{H_i}^M(\chi_i)).$$
  So it remains to show that $(M,(H_i,\chi_i))$  are of  finite cuspidal type. This follows from \Cref{FinMultCuspDesc,cor:CuspMultType}.

\end{document}